\theoremstyle{theorem}
\newtheorem{alsproposition}[thm]{Proposition}
\theoremstyle{definition}
\numberwithin{thm}{section}
\begin{document}
\title{D-Modules and Holonomic Functions}





\proctitle        {D-Modules and Holonomic Functions}
\proclecturer     {Anna-Laura Sattelberger and Bernd Sturmfels}
\procinstitution  {MPI-MiS Leipzig and MPI-MiS Leipzig/UC Berkeley}
\procmail         {anna-laura.sattelberger@mis.mpg.de and bernd@mis.mpg.de}
\procauthor       {Anna-Laura Sattelberger and Bernd Sturmfels}
\procabstract     {%
	In algebraic geometry, one studies  the solutions to polynomial equations, or, equivalently,
	to  linear partial differential equations with 
	constant coefficients. These lecture notes address the more general case 
	when the coefficients of the partial differential equations are polynomials. The letter $D$ 
	stands for the Weyl algebra, and a $D$-module is a left module over~$D$. We 
	focus on left ideals, or $D$-ideals. 
	We represent holonomic functions in several variables by 
	the linear differential equations they satisfy. This
	encoding by a $D$-ideal is useful~for many problems, e.g.,~in
	geometry, physics and statistics. We explain how to~work~with holonomic functions. 
	Applications include volume computations and likelihood inference.
}

\makeheader

\section*{Introduction}
This article represents the notes for the three lectures delivered by the
second author at the Math+ Fall School in Algebraic Geometry,
held at FU Berlin from September~30 to October~4, 2019.
The aim was to give an introduction to \mbox{$D$-modules} and their use for
concrete  problems in applied algebraic geometry. This centers around
the concept of a holonomic function in several variables. Such a function
is among the solutions to a system of linear partial differential equations
whose solution space is finite-dimensional.
Our algebraic representation as a \mbox{$D$-ideal} allows us to perform
many operations with such objects.

The three lectures form the three sections of this article.
In the first lecture, we introduce the basic tools. The focus is on
computations that are built on Gr\"obner bases in the Weyl algebra.
We review the Fundamental Theorem of Algebraic Analysis,
we introduce holonomic $D$-ideals, and we discuss their holonomic rank.
Our presentation of these topics is based on the book \cite{als:SST}.

In the second lecture, we study the problem of measuring areas and volumes
of semi-algebraic sets. These quantities are  represented as 
integrals, and our task is to evaluate such an integral as accurately as is possible.
To do so, we follow the approach of Lairez, Mezzarobba, and Safey El Din
\cite{als:LMS}. We introduce a parameter into the integrand,
and we regard our integral as a function in that parameter.
Such a volume function is holonomic, and we derive a $D$-ideal that annihilates it.
Using manipulations with that $D$-ideal, we arrive at a scheme that
allows for highly accurate numerical evaluation of the relevant integrals.

The third lecture is about connections to statistics.
Many special functions arising in statistical inference are
holonomic. We start out with likelihood functions for discrete models
and their Bernstein--Sato polynomials, and we then discuss the
{\em holonomic gradient method} and
{\em holonomic gradient descent}.
These were developed by a group of Japanese scholars \cite{als:HNTT, als:HGM}
for representing, evaluating and optimizing functions arising in statistics.
We give an introduction to this theory,
aiming to highlight opportunities for further research.
Our readers can hone their holonomic skills with a list of $19$ problems at the end of these lecture notes. 
We also provide hints and solutions to solve the problems.

\section{Tools}

Our presentation follows closely the one in \cite{als:SST, als:Tak}, albeit we use a slightly 
different notation.
For any integer $n \geq 1$, we introduce the $n$th {\em Weyl algebra} 
with complex coefficients:
\[ D \,\, = \,\, \mathbb{C}  \left[ x_1,\ldots,x_n\right] \langle  \partial_1, \ldots, \partial_n \rangle . \] 
We  sometimes write $D_n$ instead of $D$ if we want to highlight
the dimension $n$ of the ambient space.
For the sake of brevity, we often write $D = \mathbb{C} [x]\langle \partial \rangle$,
especially when $n=1$.
Formally, $D$ is the free associative algebra over $\mathbb{C} $ in the $2n$ generators 
$x_1,\ldots,x_n,\partial_1,\ldots,\partial_n$ modulo the relations
that all pairs of generators commute, with the exception of the $n$ 
special relations
\begin{equation}\label{als:eqprodcalc} 
\quad \partial_i x_i - x_i \partial_i \,\, = \,\, 1 \qquad \hbox{for}\,\,\, i =1,2,\ldots,n. 
\end{equation}
The Weyl algebra $D$ is similar to a commutative polynomial ring in $2n$ variables.
But it is non-commutative,
due to~\eqref{als:eqprodcalc}. A $\mathbb{C} $-vector space basis of $D$ consists of the 
{\em normal monomials}
\[ x^a \partial^b \,\,=\,\, x_1^{a_1} x_2^{a_2} \cdots x_n^{a_n}
\partial_1^{b_1} \partial_2^{b_2} \cdots \partial_n^{b_n}. \]
Indeed, every word in the $2n$ generators can be written uniquely as an
integer linear
combination of normal monomials. It is an instructive
exercise to find this expansion for a monomial $ \partial^u x^v$.
Many computer algebra systems have a built-in capability for computing in $D$.
For instance, in {\tt Macaulay2} (cf.~\cite{als:LT}),
 the expansion into normal monomials is 
done automatically.

\begin{ex} Let $n=2$, $u=(2,3)$ and $v = (4,1)$.
	The normal expansion of $\partial^u x^v= \partial_1^2  \partial_2^3 x_1^4 x_2$ 
	can be found by hand:
	\begin{equation}
	\label{als:eqexpansion} 
	x_1^4 x_2  \partial_1^2  \partial_2^3
	+3 x_1^4  \partial_1^2  \partial_2^2
	+8 x_1^3 x_2  \partial_1  \partial_2^3
	+24 x_1^3  \partial_1  \partial_2^2
	+12 x_1^2 x_2  \partial_2^3
	+36 x_1^2  \partial_2^2.
	\end{equation}
		For this derivation
	we recommend the following intermediate factorization:
	\[ ( \partial_1^2 x_1^4) (\partial_2^3 x_2) \,\, = \,\,
	\bigl( x_1^4 \partial_1^2 + 8 x_1^3 \partial_1 + 12 x_1^2\bigr)
	\bigl( x_2 \partial_2^3 + 3 \partial_2  \bigr). \]
	The formula~\eqref{als:eqexpansion} is the output when the following line 
	is typed into {\tt Macaulay2}:
	\begin{verbatim}
	D = QQ[x1,x2,d1,d2, WeylAlgebra => {x1=>d1,x2=>d2}];  
	d1^2*d2^3*x1^4*x2
	\end{verbatim}
\end{ex}
Another important object is the ring of linear differential operators 
whose coefficients are rational functions
in $n$ variables. We call this ring the  {\em rational Weyl algebra} and we 
denote it~by 
\[ \,\,\, R \,\,= \,\,\mathbb{C}  (x_1,\ldots,x_n)\langle \partial_1, \ldots, \partial_n \rangle
\qquad \text{or} \qquad R \,=\, \mathbb{C} (x)\langle \partial \rangle. \]
Note that $D$ is a subalgebra of $R$.
The multiplication in $R$ is defined as follows:
\begin{equation}
\label{als:eqprodcalc2}
\partial_i r(x) \,\, = \,\, r(x) \partial_i + \frac{\partial r}{\partial x_i}(x)
\qquad \hbox{for all }\,\, r \in \mathbb{C} (x_1,\ldots,x_n). 
\end{equation}
This simply extends the product rule~\eqref{als:eqprodcalc} from polynomials
to rational functions. The ring $R_1$ is a (non-commutative) principal ideal domain, 
whereas $D_1$ and $R_2$ are not. But, a theorem due to Stafford~\cite{als:Sta} guarantees
that every $D$-ideal is generated by only two elements.

We are interested in studying left modules $M$ over the Weyl algebra $D$ 
or the rational Weyl algebra $R$. 
Throughout these lecture notes, we denote the action of $D$ resp.~$R$ on $M$~by 
\[ \bullet \colon D\times M \to M \quad \text{(resp. } \bullet \colon R\times M \to M \text{)}.\]
We are especially interested in
left  $D$-modules of the form $D/I$ for some 
left ideal $I$ in the Weyl algebra $D$. 
Systems of linear partial differential equations with polynomial coefficients then 
can be investigated as modules over $D$. 
Likewise, rational coefficients lead to modules over $R$.
Therefore, the theory of $D$-modules allows us to study linear PDEs with 
polynomial coefficients by algebraic methods. 
In these notes we will exclusively deal with {\em left} modules over $D$ 
(resp. {\em left} ideals in $D$) and will refer to them simply as
{\em $D$-modules} (resp. {\em $D$-ideals}).

\begin{remark}
	In many sources, the theory of $D$-modules is introduced  more abstractly.
	Namely, one considers the sheaf $\mathcal{D}_X$ of differential operators
	on some smooth complex variety $X$. Its sections on an affine open subset $U$ is
	the ring $\mathcal{D}_X(U)$ of differential operators on the corresponding 
	$\mathbb{C} $-algebra $\mathcal{O}_X(U)$.
	In our case,  $X = \mathbb{A}_{\mathbb{C} }^n$ is affine $n$-space 
	over the complex numbers,
	so $\mathcal{O}_X(X) = \mathbb{C} [x_1,\ldots,x_n]$ is the polynomial ring,
	and the Weyl algebra is recovered as the global sections of that sheaf.
	In symbols, we have $D = \mathcal{D}_X(X)$.	Modules over~$D$ then 
	correspond precisely to $\mathcal{O}_X$-quasi-coherent 
	$\mathcal{D}_X$-modules, see~\cite[Proposition 1.4.4]{als:HTT}.
\end{remark}

\begin{ex} 
	Many function spaces are $D$-modules in a natural way. 
	Let $F$ be a space of holomorphic functions 
	on a domain in $\mathbb{C} ^n$, such that $F$ is closed under taking partial derivatives.
	The natural action of the Weyl algebra turns~$F$ into a $D$-module, 
	as follows:
	\[ \bullet \colon D \times F \longrightarrow F, \quad 
	(\partial_i ,f) \,\mapsto \,
	\partial_i \bullet f \,\coloneqq \,\frac{\partial f}{\partial x_i}, 
	\  (x_i,f) \mapsto x_i \bullet f \,\coloneqq \,x_i\cdot f.\]
\end{ex}

\begin{defn}
	We write $\text{Mod}(D)$ for the category of $D$-modules.
	Let $I$ be a $D$-ideal and $M\in \text{Mod}(D)$. The {\em solution space of $I$ in $M$} 
	is the $\mathbb{C}  $-vector space
	\[ \text{Sol}_M(I)
	\,\,\coloneqq \,\,\left\{ m \in M \mid P \bullet m =0 \text{ for all } P\in I\right\} .\]
\end{defn}

\begin{remark} 
	For $P \in D, \,M\in \text{Mod}(D)$, we have the vector space~isomorphism
	\[ \text{Hom}_D\left( D/DP,M\right)\,\, \cong \,\,\left\{ m \in M \mid P \bullet m=0 \right\}. \]
	This implies that ${\rm Sol}_M(I)$ is isomorphic to 
	$ \text{Hom}_D\left( D/I,M \right)$.
\end{remark}

In what follows, we will be relaxed about specifying the
function space $F$ or module $M$. The theory works best for holomorphic 
functions on a small open ball in $\mathbb{C} ^n$.
For our applications in the later sections, we think of smooth
real-valued functions on an open subset of $\mathbb{R} ^n$,
and we usually  take $D$ with coefficients in~$\mathbb{Q} $.
We will often drop the subscript $F$ in ${\rm Sol}_F(I)$
and assume that a suitable class $F$ of 
infinitely differentiable functions is understood from the context.

\begin{ex}[$n=2$] 
	Let $I=\langle \partial_1x_1\partial_1,\partial_2^2+1\rangle$. 
	The solution space equals
	\[ \text{Sol}(I) \,\,=\,\, \mathbb{C}  \bigl\{ \,\sin(x_2),\,\cos (x_2),\, \log (x_1)\sin (x_2),\,
	\log (x_1)\cos(x_2) \bigr\}.\]
	Hence ${\rm dim}( \text{Sol}(I)) = 4$.
	The reader is urged to verify this and to experiment with questions like these: What happens if 
	$\partial_2^2+1$ is replaced by
	$\partial_2^3+1$? What if
	$\,\partial_1x_1\partial_1$ is replaced by 
	$\,\partial_1 x_1 \partial_1x_1\partial_1$, or if some indices $1$ are turned into $2$?
\end{ex}

The Weyl algebra $D$ has three important
commutative polynomial subrings:
\begin{bulist}
	\item The usual polynomial ring $\mathbb{C}  [x_1,\ldots,x_n]$
	acts by multiplication on function spaces. Its ideals represent subvarieties of $\mathbb{C} ^n$.
	In analysis, this models
	distributions that are supported on subvarieties.
	\item The polynomial ring $\mathbb{C}  [\partial_1,\ldots,\partial_n]$ 
	represents linear partial 
	differential operators with constant coefficients. 
	Solving such PDEs is very interesting already.
	This is highlighted in
	\cite[Chapter~10]{als:Stu}.
	\item The polynomial ring $\mathbb{C}  [\theta_1,\ldots,\theta_n]$, 
	where $\theta_i \coloneqq x_i \partial_i$, will be important for us shortly.
	The sum $\theta_1 + \cdots + \theta_n$ is known as the	{\em Euler operator}.	
	The joint eigenvectors of the operators
	$\theta_1,\ldots,\theta_n$  are precisely the monomials.
\end{bulist}

\begin{ex}[$n=1$]\label{als:exreplacing} 
	Replacing the derivatives $\partial_i$ by the operators $\theta_i$ has an interesting
	effect on the solutions. It replaces each coordinate $x_i$ by ${\rm log}(x_i)$.
	In one variable, we suppress the indices:
	\begin{bulist}
		\item For $I = \langle (\partial +3)^2(\partial-7) \rangle $,
		$\text{Sol} (I) = \mathbb{C}  \left\{ \exp(-3x),x\exp(-3x),\exp(7x) \right\}$.
		\item For $J = \langle (\theta +3)^2(\theta - 7) \rangle$,
	    $\text{Sol}(J) = \mathbb{C}  \left\{ x^{-3},\log(x)x^{-3},x^7  \right\} $.
	\end{bulist}
	The reader should try the same for their favorite polynomial ideal in $n$ variables.
\end{ex}

We already mentioned that
every operator $P$ in the Weyl algebra $D$ 
has a unique expansion 
into normally ordered monomials,
\[ P\,\,=\sum_{(a,b)\in E }c_{ab}x^a\partial^b,\] where $c_{ab}\in \mathbb{C}  \backslash \{0\}$ 
and $E$ is a finite subset of $\mathbb{N} ^{2n}$.
Fix $u,v\in \mathbb{R} ^n$ with $u+v \geq 0$ and set
$m=\max_{(a,b)\in E}\left( a\cdot u + b\cdot v\right)$.
The {\em initial form} of $\,P \in D\,$ is defined as
\[ \text{in} _{(u,v)}(P) \quad = \sum_{a\cdot u + b\cdot v =m} \!\! c_{ab}\prod_{u_k+v_k>0}
\!\! x_k^{a_k}\xi_k^{b_k} \prod_{u_k+v_k=0} \!\! x_k^{a_k}\partial_k^{b_k}
\,\,\,\in \,\,\, \text{gr}_{(u,v)}\left( D \right).\]
Here $\xi_k$ is a new variable
that commutes with all others. The initial form~is an element in the
{\em associated graded ring} under
the filtration of $D$ by the weights~$(u,v)$:
\[ \text{gr}_{(u,v)}(D) \,\,=\,\,
\begin{cases}
D & \text{ if } u+v=0,\\
\mathbb{C} [x_1,\ldots,x_n,\xi_1,\ldots,\xi_n] & \text{ if }u+v>0, \\
\text{a mixture of the above} & \text{ otherwise.} 
\end{cases}\]
Of particular interest is the case when
$u$ is the zero vector and $v$ is the all-one
vector $e = (1,1,\ldots,1)$.
Analysts refer to ${\rm in}_{(0,e)}(P)$ as the {\em symbol}
of the differential operator $P$. Thus, the symbol of
a differential operator in $D$ is simply an ordinary polynomial in $2n$ variables.

We continue to allow arbitrary weights $u,v\in \mathbb{R} ^n$ that satisfy $u+v \geq 0$.
For a $D$-ideal~$I$,
the vector space $ \text{in} _{(u,v)}(I) = \mathbb{C}  \left\{ \text{in} _{(u,v)}(P) \mid P\in I \right\} $ 
is a left ideal in $\text{gr}_{(u,v)}(D)$.
The computation of the {\em initial ideals}
$ \text{in} _{(u,v)}(I)$ and their associated {\em Gr\"obner bases} via
the {\em Buchberger algorithm} in $D$  is the engine behind many
practical applications of $D$-modules, such as those in
\cite{als:ALSS,als:HNTT,als:HGM}.

\begin{defn} 
	Fix $e = (1,1,\ldots,1)$ and $0 =(0,0,\ldots,0)$ in $\mathbb{R} ^n$.
	Given any $D$-ideal $I$,
	the {\em characteristic variety} $\text{Char}(I)$ is the vanishing set of
	the {\em characteristic ideal} $\,\text{ch} (I) \coloneqq \text{in} _{(0,e)}(I)$ in $\mathbb{C} ^{2n}$.
	The characteristical ideal is the ideal in\linebreak$\mathbb{C} [x,\xi] = 	\mathbb{C} [x_1,\ldots,x_n,\xi_1,\ldots,\xi_n] $
	which is generated by the symbols ${\rm in}_{(0,e)}(P)$ of all differential operators 
	$P\in I$. 
\end{defn}
In the theory of $D$-modules, one refers to
$\text{Char}(I)$ as the characteristic variety of the $D$-module $D/I$.

\begin{ex}[$n=2$]\label{als:extwoplanes} 
	Let $I=\langle x_1\partial_2,x_2\partial_1 \rangle$.  The two generators
	are \underbar{not} a Gr\"obner basis of~$I$. To get a Gr\"obner basis, one also needs
	their commutator. The characteristic ideal~is
	\begin{align*} \text{ch} (I) &\,\,=\,\, \langle x_1\xi_2,x_2\xi_1,x_1\xi_1-x_2\xi_2 \rangle \\
	&\,\,=\,\, \langle x_1,x_2 \rangle 
	\,\cap\, \langle \xi_1,\xi_2 \rangle \,\cap\, \langle 
	x_1^2  ,x_2^2,x_1\xi_2,x_2\xi_1,\xi_1^2, \xi_2^2, x_1\xi_1-x_2\xi_2 \rangle .
	\end{align*}
	The last ideal is primary to the embedded prime $\langle x_1,x_2,\xi_1,\xi_2 \rangle$.
	The characteristic variety is the union of two planes,
	defined by the two minimal primes, that meet in the origin in $\mathbb{C} ^4$. 
\end{ex}

\begin{thm}[Fundamental Theorem of Algebraic Analysis]
	${}$ \newline
	Let $I$ be a proper $D$-ideal. Every irreducible component of  
	its characteristic variety $\text{Char} (I)$ has dimension at least $n$.  
\end{thm}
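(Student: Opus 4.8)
The plan is to deduce this from \emph{Bernstein's inequality}, whose natural setting is the \emph{Bernstein filtration} rather than the order filtration used to define $\text{Char}(I)$. Let $B_k\subseteq D$ be the $\mathbb{C}$-span of the normal monomials $x^a\partial^b$ with $|a|+|b|\le k$. Then $\text{gr}^B(D)\cong\mathbb{C}[x_1,\ldots,x_n,\xi_1,\ldots,\xi_n]$ is a polynomial ring in $2n$ variables and $\dim_{\mathbb{C}}B_k=\binom{2n+k}{2n}$ is a polynomial of degree $2n$ in $k$. Since $I$ is proper, $M\coloneqq D/I$ is a nonzero finitely generated $D$-module; choose a \emph{good} filtration $\{\Gamma_k\}$ for $M$ compatible with $\{B_k\}$ (e.g.\ the images of the $B_k$), so that $\text{gr}^{\Gamma}(M)$ is a finitely generated $\mathbb{C}[x,\xi]$-module whose Hilbert polynomial has some degree $d$. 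Two standard facts about good filtrations are invoked here: this degree $d$ is independent of the choice of good filtration, and the "dimension" computed this way agrees with the one computed from the order filtration, so $d=\dim\text{Char}(I)$. Hence it suffices to prove $d\ge n$.

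The heart of the matter is the claim that, for $k\gg 0$, the $\mathbb{C}$-linear map
\[
\phi_k\colon B_k\longrightarrow \text{Hom}_{\mathbb{C}}\bigl(\Gamma_k,\Gamma_{2k}\bigr),\qquad P\longmapsto\bigl(m\mapsto P\bullet m\bigr),
\]
is injective. Granting this, $\binom{2n+k}{2n}=\dim_{\mathbb{C}}B_k\le \dim_{\mathbb{C}}\Gamma_k\cdot\dim_{\mathbb{C}}\Gamma_{2k}$. The left-hand side grows like $k^{2n}$, whereas $\dim_{\mathbb{C}}\Gamma_k$ is eventually bounded by a polynomial of degree $d$, so the right-hand side grows like $k^{2d}$. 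Letting $k\to\infty$ forces $2n\le 2d$, that is, $d\ge n$.

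It remains to establish injectivity of $\phi_k$, which I would prove by induction on the least $m$ with $P\in B_m$. Suppose $P\ne 0$, $P\in B_m$, and $P\bullet\Gamma_k=0$ for some large $k$. If $m=0$ then $P$ is a nonzero scalar and cannot annihilate $\Gamma_k$, which is nonzero for $k\gg 0$ since $M\ne 0$. If $m\ge 1$, then for each generator $y\in\{x_1,\ldots,x_n,\partial_1,\ldots,\partial_n\}$ and each $\gamma\in\Gamma_{k-1}$ we have $y\bullet\gamma\in\Gamma_k$, so $[P,y]\bullet\gamma=P\bullet(y\bullet\gamma)-y\bullet(P\bullet\gamma)=0$; since $[P,y]\in B_{m-1}$, the inductive hypothesis gives $[P,y]=0$. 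As this holds for all $2n$ generators, $P$ lies in the center of $D$, which is $\mathbb{C}$, so $P\in B_0$, contradicting $m\ge 1$. Hence $P=0$, and $\phi_k$ is injective.

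Finally, the statement asks that \emph{every} component of $\text{Char}(I)$ have dimension $\ge n$, which is genuinely stronger than $\dim\text{Char}(I)\ge n$. I would obtain it via Gabber's theorem that $\text{Char}(I)$ is \emph{involutive} for the standard symplectic form on $\mathbb{C}^{2n}$: the easy half is that $\text{ch}(I)$ is closed under the Poisson bracket, because $\sigma([P,Q])=\{\sigma(P),\sigma(Q)\}$ whenever these are nonzero and $[P,Q]\in I$ for $P,Q\in I$; and every irreducible component of an involutive subvariety of a $2n$-dimensional symplectic space has dimension $\ge n$. Alternatively one can argue that the full subcategory of $D$-modules whose characteristic variety has dimension $<n$ is closed under submodules, quotients and extensions, and then apply Bernstein's inequality to a suitable subquotient of $M$ concentrated on a prescribed component. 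The step I expect to be the main obstacle is precisely this refinement to individual components — equivalently, Gabber's commutative-algebra input that the radical of a Poisson-closed ideal is again involutive; the bare inequality $d\ge n$ is the clean part of the argument.
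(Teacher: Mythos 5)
The paper does not prove this theorem: it attributes it to Sato, Kawai, and Kashiwara and cites \cite{als:SKK} without argument, so there is no internal proof to compare against. Your sketch is the standard modern route — Bernstein's inequality via the Bernstein filtration, then Gabber's involutivity theorem to upgrade the global dimension bound to a statement about each irreducible component — and the outline is sound.

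Two small points and one substantive one. First, there is an off-by-one in the bookkeeping around $d$: if $d$ is literally the degree of the Hilbert polynomial of $\text{gr}^{\Gamma}(M)$, i.e.\ the polynomial matching $\dim_{\mathbb{C}}(\Gamma_k/\Gamma_{k-1})$, then $\dim_{\mathbb{C}}\Gamma_k$ grows like $k^{d+1}$ and $d=\dim\text{Char}(I)-1$. You want $d$ to denote $\dim \text{gr}^{\Gamma}(M)$ itself, so that the cumulative dimension $\dim_{\mathbb{C}}\Gamma_k$ grows like $k^d$ and comparing $k^{2n}$ with $k^{2d}$ yields $\dim\text{Char}(I)=d\geq n$; the conclusion is the same but the identification should be stated consistently. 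Second, the injectivity of $\phi_k$ is cleanest as a single induction on $k$ (arrange $\Gamma_0\neq 0$ by shifting the filtration): your step passes from $(P\in B_m,\,\Gamma_k)$ to $([P,y]\in B_{m-1},\,\Gamma_{k-1})$, so both indices drop, and "induction on the least $m$" silently also decreases $k$. The argument is right in spirit, but the inductive claim should be phrased for all $k$ and proved by induction on $k$, with the minimal $m\leq k$ handled inside the step. Third, and substantively: you correctly flag that the component-wise conclusion genuinely requires Gabber's involutivity theorem, and the dividing line you draw is accurate — closure of $\text{ch}(I)$ itself under the Poisson bracket is elementary (since $I$ is a left ideal, $[P,Q]=PQ-QP\in I$ for $P,Q\in I$, and the symbol map sends commutators to brackets), whereas showing the radical of $\text{ch}(I)$ is again Poisson-closed is the hard commutative-algebra input. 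As written your argument rests on that unproved theorem at the final step, which you explicitly acknowledge; granting it, the proof is complete.
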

This theorem was established by Sato, Kawai, and Kashiwara in~\cite{als:SKK}. 
\begin{defn} 
	A $D$-ideal $I$ is {\em holonomic} if $\dim \left(\text{ch}(I)\right)=n$, i.e.,~if
	the dimension of its characteristic variety in $\mathbb{C} ^{2n}$ is minimal.
	Fix the field $\,\mathbb{C} (x) = \mathbb{C} (x_1,\ldots,x_n)\,$ and 
	$\,\mathbb{C} (x)[\xi]=\mathbb{C} (x)[\xi_1,\ldots,\xi_n]$. 
	The {\em holonomic rank} of $I$~is 
 	\[ \, \text{rank}  (I) \, = \, \dim_{\mathbb{C}  (x)} \bigl( \mathbb{C} (x)[\xi]/ \mathbb{C} (x)[\xi] \text{ch}(I) \bigr) 
	\, = \, \dim_{\mathbb{C} (x)} \left( R/RI\right).\]
	Both dimensions count the
	standard monomials for a Gr\"obner basis
	of $RI$ in $R$ with respect to $e$.
	The $D$-ideal in Example~\ref{als:extwoplanes} is holonomic
	of holonomic rank~$1$.
\end{defn}

If a $D$-ideal $I$ is holonomic, then $\text{rank} (I)<\infty$. 
But, the converse is not true. For instance, consider
the $D$-ideal  $I=\langle x_1\partial_1^2,x_1\partial_2^3\rangle$.
Its characteristic ideal equals
${\rm ch}(I) = \langle x_1 \rangle \cap \langle \xi_1^2, \xi_2^3 \rangle$,
so $ \text{Char}(I)$ has dimension $3$ in $\mathbb{C} ^4$, which means that
$I$ is not holonomic. After passing to rational function coefficients,
we have $\mathbb{C} (x)[\xi]  {\rm ch}(I) = \langle \xi_1^2, \xi_2^3 \rangle$, 
and hence ${\rm rank}(I) = 6 < \infty$.

We define the \emph{Weyl closure} of  a $D$-ideal $I$ 
to be the $D$-ideal $W(I) \coloneqq RI\cap D$.
We always have $I \subseteq W(I)$, and 
$I$ is \emph{Weyl-closed} if $I = W(I)$ holds.
The operation of passing to the Weyl closure 
is analogous to that of passing to the radical in a polynomial ring.
Namely, assuming $\text{rank}(I)$ is finite,
$W(I)$ is the ideal of all differential operators that annihilate
all classical solutions of $I$. In particular, it fulfills
${\rm rank}(I) = {\rm rank}(W(I))$. If
$I=\langle x_1\partial_1^2,x_1\partial_2^3\rangle$,
then $W(I) = \langle \partial_1^2, \partial_2^3 \rangle$, and
\[ {\rm Sol}(I) \, = \, {\rm Sol}(W(I)) \, = \,
\mathbb{C}  \{ 1, x_1, x_2 , x_1 x_2, x_2^2 , x_1 x_2^2 \}. \]
In general, it is a difficult task to compute the
Weyl closure $W(I)$ from given generators of~$I$.

\begin{defn} 
	The \emph{singular locus} $\text{Sing} (I)$ of $I$ is the variety in $\mathbb{C} ^{n}$ 
	defined~by 
	\begin{equation}\label{als:eqsing}
	\text{Sing}  (I) \,\,\coloneqq \,\,\bigl(  \text{ch}(I): \langle \xi_1,\ldots,\xi_n
	\rangle^{\infty}  \bigr) \,\cap \,\mathbb{C} [x_1,\ldots,x_n].
	\end{equation}
	Geometrically, the singular locus is the closure of the projection
	of \linebreak$\text{Char}(I) \backslash (\mathbb{C} ^n \times \{0\})$ onto the
	first $n$ coordinates of $\mathbb{C} ^{2n}$.
	If $I$ is holonomic, then $\text{Sing} (I)$ is a proper subvariety of $\mathbb{C} ^n$.
	For instance, in Example~\ref{als:exreplacing} with $n=1$,
	we have ${\rm Sing}(I) = \emptyset$ whereas 
	${\rm Sing}(J) = \{ 0 \}$ in $\mathbb{C} ^1$.
\end{defn}

\begin{ex}[$n=4$]\label{als:eqGauss}
	Fix constants $a,b,c\in \mathbb{C} $ and consider the $D$-ideal
	\begin{equation}
	\label{als:eqgauss4}
	I\,\,=\,\,\langle \,
	\, \theta_1-\theta_4+1-c,\,\theta_2+\theta_4+a,\,\theta_3+\theta_4+b\, ,
	\, \partial_2\partial_3-\partial_1\partial_4\rangle.
	\end{equation}
	The first three operators tell us that every solution $g \in {\rm Sol}(I)$
	is $\mathbb{Z} ^3$-homogeneous:
	\[ g(x)\,=\,x_1^{c-1}x_2^{-a}x_3^{-b}  \cdot f \left(\frac{x_1x_4}{x_2x_3} \right) 
	\qquad \hbox{for some function $f$.} \]
	The last generator of the $D$-ideal $I$ 
	implies that the univariate function $f$ satisfies
	\[ x(1-x)f'' +(c-x(a+b+1))f'-abf=0.\]
	This second-order ODE is {\em Gau{\ss}' hypergeometric equation}.
	The $D$-ideal $I$ is the 
	Gel'fand--Kapranov--Zelevinsky
	(GKZ) representation of Gau{\ss}' hypergeometric function. We have 
	\[ \text{ch} (I) \,\,= \,\,\langle \xi_2\xi_3-\xi_1\xi_4,\,x_1\xi_1-x_4\xi_4,x_2\xi_2+x_4\xi_4,x_3\xi_3+x_4\xi_4\rangle. \]
	This characteristic ideal has ten associated primes, one for each face of a square. 
	Using the ideal operations in~\eqref{als:eqsing} we find 
	$\text{Sing} (I) = \langle \,x_1x_2x_3x_4(x_1x_4-x_2x_3)\,\rangle$.
	The generator is the  {\em principal determinant} of the
	matrix $\begin{pmatrix} x_1 & x_2 \\ x_3 & x_4 \end{pmatrix}$,
	i.e.,~the product of all of its subdeterminants.
\end{ex}

\begin{thm}[Cauchy--Kowalevskii--Kashiwara]
	Let $I$ be a holonomic \mbox{$D$-ideal} and let $\,U$ be an open subset 
	of $ \, \mathbb{C} ^n \setminus \text{Sing} (I)$ that is simply connected. Then
	the space of holomorphic functions on $U$ that are solutions to $I$ 
	has dimension equal to $\text{rank} (I)$.
	In symbols, ${\rm dim}({\rm Sol}(I)) = \text{rank} (I)$.
\end{thm}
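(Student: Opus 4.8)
The plan is to show that, restricted to $\mathbb{C}^n \setminus \text{Sing}(I)$, the holonomic $D$-module $D/I$ is a vector bundle with integrable connection of rank $\text{rank}(I)$, and then to read off $\dim_\mathbb{C}(\text{Sol}(I))$ on the simply connected domain $U$ from the classical Cauchy--Kowalevskii existence and uniqueness theorem for such systems.

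\emph{Setting up a Pfaffian system.} Since $I$ is holonomic, $r \coloneqq \text{rank}(I)$ is finite. Passing to the rational Weyl algebra, fix a Gröbner basis of $RI$ in $R$ with respect to $e$; its $r$ standard monomials $\partial^{b_1}, \dots, \partial^{b_r}$ form a basis of the $\mathbb{C}(x)$-vector space $R/RI$, and for a proper $I$ we may take $b_1 = 0$. Reducing each $\partial_i \cdot \partial^{b_j}$ modulo the Gröbner basis records the $\partial_i$-action on this basis as a matrix $A_i \in \mathbb{C}(x)^{r \times r}$, and the identities $\partial_i \partial_j = \partial_j \partial_i$ in $R$ become the integrability conditions $\partial_i A_j - \partial_j A_i = A_j A_i - A_i A_j$. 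For a holomorphic function $u$ on an open set avoiding the poles of the $A_i$, form $\mathbf{u} \coloneqq (\partial^{b_1} u, \dots, \partial^{b_r} u)^{\top}$, a vector with first entry $u$. One checks, using ${\rm rank}(I) = {\rm rank}(W(I))$ and ${\rm Sol}(I) = {\rm Sol}(W(I))$, that $u \in \text{Sol}(I)$ if and only if $\mathbf{u}$ solves the first-order system $\partial_i \mathbf{u} = A_i \mathbf{u}$ for $i = 1, \dots, n$, and that $u \mapsto \mathbf{u}$ is a $\mathbb{C}$-linear isomorphism onto its holomorphic solutions. It remains to count those on $U$.

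\emph{Regularity on $U$ and the count.} The matrices $A_i$ a priori have poles, but away from $\text{Sing}(I)$ the system is genuinely regular: by definition~\eqref{als:eqsing}, the characteristic variety of the restriction of $D/I$ to $\mathbb{C}^n \setminus \text{Sing}(I)$ is contained in the zero section, so that restriction is coherent over $\mathcal{O}$, hence --- on this smooth variety --- a vector bundle with integrable connection whose rank equals its generic rank $r$ (see \cite{als:SST}). Consequently, in a suitable local frame the Pfaffian system has holomorphic coefficients throughout $U \subseteq \mathbb{C}^n \setminus \text{Sing}(I)$, while the dimension of the space of flat sections is frame-independent. The integrability conditions above are exactly the hypothesis of the holomorphic Frobenius/Cauchy--Kowalevskii theorem, which yields near each point of $U$ a flat section with any prescribed value there, an $r$-dimensional local solution space. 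Because $U$ is simply connected the monodromy of analytic continuation is trivial, so every local flat section extends uniquely to a global one on $U$. Hence $\dim_\mathbb{C}(\text{Sol}(I)) = r = \text{rank}(I)$.

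\emph{The main obstacle.} The delicate part is the regularity statement: that the a priori poles of the Pfaffian system are confined to $\text{Sing}(I)$, and that the rank of the resulting connection matches the number $\text{rank}(I) = \dim_{\mathbb{C}(x)}(R/RI)$. This uses the finiteness of holonomic rank together with the structure theory of $D$-modules whose characteristic variety is the zero section, and it is the one step that cannot be reduced to a direct Gröbner-basis computation. The first step is linear algebra over $\mathbb{C}(x)$, and the final count is classical complex analysis.
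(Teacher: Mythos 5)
The paper does not prove this theorem: it states it and refers the reader to \cite[Theorem~1.4.19]{als:SST} for ``a discussion and pointers to proofs.'' Your sketch is correct and follows exactly the argument found in that cited reference: convert $RI$ to a first-order Pfaffian system over $\mathbb{C}(x)$ via a Gr\"obner basis in $R$, use ${\rm Sol}(I)={\rm Sol}(W(I))$ to pass from $I$ to $RI$, invoke the structure theorem that a coherent $\mathcal{D}$-module whose characteristic variety is the zero section over an open set is a vector bundle with integrable connection there, and finish by the holomorphic Frobenius theorem together with simple connectedness of $U$. You also correctly isolate the non-elementary ingredient (the $\mathcal{O}$-coherence/regularity step) from the parts that are pure linear algebra and complex analysis.

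One small slip: with your convention $\partial_i\mathbf{u}=A_i\mathbf{u}$, computing $\partial_i\partial_j\mathbf{u}=\partial_j\partial_i\mathbf{u}$ gives the integrability condition
\[
\partial_i A_j - \partial_j A_i \;=\; A_iA_j - A_jA_i,
\]
i.e.\ the commutator $[A_i,A_j]$, not $[A_j,A_i]$ as you wrote. This sign has no bearing on the rest of the argument.
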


For a discussion and pointers to proofs, we refer to \cite[Theorem 1.4.19]{als:SST}.
Let us point out that the theorem also holds true if only $\text{rank}(I)<\infty$.

\begin{ex} 
	The $D$-ideal $I$ in~\eqref{als:eqgauss4} is holonomic
	with ${\rm rank}(I)=2$. Outside the
	singular locus in $\mathbb{C} ^4$, it has two 
	linearly independent solutions.
	These arise from Gau\ss' \ hypergeometric ODE.
\end{ex}

This raises the question of how one can compute a basis
for ${\rm Sol}(I)$ when $I$ is holonomic.
To answer this question,  let us think about the case $n=1$.
Here, the classical {\em Frobenius method} can be used to
construct a basis consisting of series solutions. These have the form
\begin{equation}\label{als:eqfrobf}
p(x) \quad = \quad x^a {\rm log}(x)^b \,+\, \hbox{higher order terms} .
\end{equation}
The first exponent $a$ is a complex number. It is a zero 
of the {\em indicial polynomial} of the given ODE.
The second exponent $b$ is a nonnegative integer, strictly
less than the multiplicity of $a$ as a zero of the indicial polynomial.

\begin{ex}\label{als:exfrobn=1} 
This following ordinary differential equation
 appears in the 2019 {\tt Wikipedia} entry for 
``Frobenius method''.
	We are looking for a solution $p$ to 
	$$ x^2 p''-x p'+(1-x) p\,\,=\,\,0 .$$ This equation is equivalent to 
	$p$ being a solution of $\,I = \langle \theta^2-2\theta+1-x \rangle $. 
	The indicial polynomial is the generator of the
	initial ideal  ${\rm in}_{ (-1,1)} (I) = \langle \,(\theta - 1)^2 \, \rangle $.
	The basis of solutions \eqref{als:eqfrobf}
	 consists of two series
	with $a = 1$ and $b \in \{0,1\}$. The higher order terms
	of these series are computed by solving the linear recurrences
	for the coefficients that are induced by $I$.
\end{ex}

Fix an integer $n \geq 1$.
The algebraic $n$-torus $T\coloneqq (\mathbb{C}  \backslash \{0\})^n$ 
acts  naturally on the Weyl algebra $D$, by scaling the generators
$\partial_i$ and $x_i$ in a reciprocal manner:
\[\circ \,\colon T \times D \longrightarrow D, \quad (t,\partial_i) 
\mapsto t_i\partial_i, \ (t,x_i)\mapsto \frac{1}{t_i} x_i .\]
This action is well-defined because it preserves the defining 
relations~\eqref{als:eqprodcalc}.
A $D$-ideal $I$ is said to be \emph{torus-fixed} if $t\circ I = I$ for all $t\in T$.  
Torus-fixed $D$-ideals play the role of monomial ideals in an 
ordinary commutative polynomial ring.
They can be described as follows:

\begin{alsproposition}\label{als:prop3cond}
	Let $I$ be a $D$-ideal. The following conditions are equivalent:
	\vspace*{-0.2in}
	\begin{abclist}
		\item $I$ is torus-fixed, \vspace{-0.1in}
		\item $I=\text{in} _{(-w,w)}(I)$ for all $w\in \mathbb{R} ^n$, \vspace{-0.1in}
		\item $I$ is generated by operators $x^ap(\theta)\partial^b$
		where $a,b\in \mathbb{N}  ^n$ and $p\in \mathbb{C} [\theta]$.
	\end{abclist}
\end{alsproposition}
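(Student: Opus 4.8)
The plan is to prove the equivalences via the cycle $(c)\Rightarrow(a)\Rightarrow(b)\Rightarrow(c)$, since the implication from an explicit generating set back to a structural statement is the most delicate.

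First I would verify $(c)\Rightarrow(a)$. Under the torus action $\circ$, one computes directly that $t\circ \theta_i = t\circ(x_i\partial_i) = (t_i^{-1}x_i)(t_i\partial_i) = x_i\partial_i = \theta_i$, so every $\theta_i$ is fixed, hence so is every polynomial $p(\theta)$. On a monomial $x^a p(\theta)\partial^b$ the action therefore scales by the single scalar $t^{b-a} := \prod_i t_i^{b_i - a_i}$; that is, $t\circ(x^a p(\theta)\partial^b) = t^{b-a}\,x^a p(\theta)\partial^b$. If $I$ is generated by such operators $g_1,\dots,g_r$, then $t\circ I$ is generated by $t\circ g_1,\dots,t\circ g_r$, each a nonzero scalar multiple of the corresponding $g_j$, so $t\circ I = I$ for every $t\in T$. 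Here one uses that $\circ$ is a ring automorphism of $D$ for each fixed $t$ (it preserves \eqref{als:eqprodcalc}, as noted just before the Proposition), so it sends the left ideal generated by a set to the left ideal generated by the image of that set.

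Next, $(a)\Rightarrow(b)$: I would pass from the multiplicative torus action to the additive Lie-algebra/grading picture. Writing each $P\in D$ in its normal expansion $P = \sum_{(a,b)\in E} c_{ab}\,x^a\partial^b$, the action gives $t\circ P = \sum_{(a,b)} c_{ab}\,t^{b-a}\,x^a\partial^b$. Thus $D$ decomposes as a direct sum of weight spaces $D_m = \mathbb{C}\{x^a\partial^b : b-a = m\}$ indexed by $m\in\mathbb{Z}^n$, and torus-invariance of $I$ is equivalent (by a standard Vandermonde/linear-independence-of-characters argument applied to finitely many weights appearing in a given element) to $I$ being a \emph{graded} subspace: $I = \bigoplus_m (I\cap D_m)$. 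Now the initial form $\mathrm{in}_{(-w,w)}$ with weight vector $(-w,w)$ assigns to $x^a\partial^b$ the value $(-w)\cdot a + w\cdot b = w\cdot(b-a) = w\cdot m$, i.e.\ the $(-w,w)$-weight is constant on each $D_m$. Hence for a $\mathbb{Z}^n$-graded ideal $I$, taking the $(-w,w)$-initial form of an element of $I$ just picks out those graded components of maximal $w$-pairing, which already lie in $I$; conversely every element of $I$ is a sum of such components. Therefore $\mathrm{in}_{(-w,w)}(I) = I$ for all $w$. (One should note $u+v = -w+w = 0$, so $\mathrm{gr}_{(-w,w)}(D) = D$ and the statement even makes sense; no $\xi$-variables appear.)

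Finally, $(b)\Rightarrow(c)$ — this is the step I expect to be the main obstacle, and the one needing the most care. Assuming $(b)$ holds, in particular $I$ is $\mathbb{Z}^n$-graded for the grading above (take enough generic $w$, or argue as in the $(a)\Leftrightarrow$ graded-subspace equivalence run backwards). So it suffices to show: a $\mathbb{Z}^n$-graded $D$-ideal is generated by homogeneous elements of the form $x^a p(\theta)\partial^b$. A homogeneous element of weight $m$ is a $\mathbb{C}$-linear combination of monomials $x^a\partial^b$ with $b - a = m$; I would first normalize using the commutation relations to write any such monomial as $x^a q(\theta)\partial^b$ where $(a,b)$ is the "reduced" pair determined by $m$ (namely $a_i = \max(-m_i,0)$, $b_i = \max(m_i,0)$) and $q$ is a monomial in the $\theta_i$ — this uses the one-variable identities $x^{k+1}\partial^{k+1} = (\theta)(\theta-1)\cdots(\theta-k)$ and more generally that $x^a\partial^b$ for arbitrary $a,b$ with $b-a = m$ reduces to $x^{a'}\partial^{b'}$ times a polynomial in $\theta$, with $(a',b')$ reduced. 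Consequently every homogeneous element of weight $m$ has the shape $x^{a}p(\theta)\partial^{b}$ for that fixed reduced $(a,b)$ and some $p\in\mathbb{C}[\theta]$; picking homogeneous generators of the graded ideal $I$ (possible since $D$ is left-Noetherian, or just take a generating set and split into graded parts) then yields generators of exactly the advertised form. I would spell out the normalization lemma carefully since that is where all the noncommutative bookkeeping lives; everything else is formal. This completes the cycle and hence the proof.
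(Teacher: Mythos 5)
Your proposal is correct. The paper itself states Proposition~\ref{als:prop3cond} without proof (it is imported from \cite{als:SST}), so there is no in-text argument to compare against, but your proof is the standard one: pass from the multiplicative torus action to the $\mathbb{Z}^n$-grading of $D$ by the weight $b-a$, observe that the $(-w,w)$-weight of $x^a\partial^b$ is $w\cdot(b-a)$ and hence constant on each homogeneous piece $D_m$, and use the one-variable normalization $x^a\partial^a=[\theta]_a$ to put a weight-$m$ homogeneous element in the shape $x^{a'}p(\theta)\partial^{b'}$ with $(a',b')$ the reduced pair for $m$. The three arrows $(c)\Rightarrow(a)$, $(a)\Rightarrow(b)$, $(b)\Rightarrow(c)$ all go through as you say, including the induction extracting homogeneous components from generic $w$ in $(b)\Rightarrow(c)$, and the factorization $x^{a'}\left(\sum_a c_a Q_a(\theta)\right)\partial^{b'}$ works without any hidden commutations because the left factor $x^{a'}$ and the right factor $\partial^{b'}$ are common to every term. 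One minor point worth spelling out if you write this up: when you say ``torus-invariance of $I$ is equivalent to $I$ being graded,'' note that the forward direction already uses the linear-independence-of-characters argument, so $(a)\Rightarrow$graded$\Rightarrow(b)$ and separately $(b)\Rightarrow$graded (via generic $w$) are two genuinely different proofs that $I$ is graded, and both are needed to close the cycle.
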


Just like in the commutative case, initial ideals
with respect to generic weights are torus-fixed.
Namely, given any $D$-ideal $I$,
if $w$ is generic in $\mathbb{R} ^n$, then $\text{in} _{(-w,w)}(I)$ is torus-fixed.
Note that $\text{in} _{(-w,w)}(I)$  is also a $D$-ideal, and 
it is important to understand how its solution space
is related to that of $I$.
Lifting the former to the latter is the key to the Frobenius method.

Let $w \in \mathbb{R} ^n$, and suppose that $f$ is a
holomorphic function on an appropriate open subset of~$\mathbb{C} ^n$.
We assume that 
a series expansion of $f$ has the lowest order 
form $\text{in} _w(f)$, called {\em initial series} with respect to assigning the weight $w_i$ for
the coordinate $x_i$ for all $i$ (see~\cite[Definition 2.5.4]{als:SST} for details). 
Under this assumption, the following result holds:

\begin{alsproposition}\label{als:propdistrac}
	If $\,f\in \text{Sol}(I)$, then $\,\text{in} _w(f)\in \text{Sol} \bigl( \text{in} _{(-w,w)}(I)\bigr)$.
\end{alsproposition}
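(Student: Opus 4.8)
The plan is to pick an arbitrary operator $P \in I$ and show that $\mathrm{in}_{(-w,w)}(P)$ annihilates $\mathrm{in}_w(f)$; since $\mathrm{in}_{(-w,w)}(I)$ is generated (as a $\mathbb{C}$-vector space) by such initial forms, this suffices. First I would recall the basic compatibility between the weight $w$ on the variables $x_i$ and the induced weight $(-w,w)$ on the operators: if we grade the space of formal series by assigning $x_i$ the degree $w_i$, then the monomial $x^a \partial^b$ shifts the $w$-degree of a homogeneous series by exactly $a\cdot w - b\cdot w = (a,b)\cdot(-w,w)$. Consequently, applying $P = \sum_{(a,b)\in E} c_{ab} x^a \partial^b$ to $f$ and collecting terms by $w$-degree, the lowest-degree part of $P \bullet f$ is obtained by applying only those monomials $x^a\partial^b$ of maximal weight $(a,b)\cdot(-w,w)$ to the initial series $\mathrm{in}_w(f)$ — this is precisely $\mathrm{in}_{(-w,w)}(P) \bullet \mathrm{in}_w(f)$, once we match the convention that $\xi_k$ acts as $\partial_k$ on the relevant part (here the weight $u=-w$, $v=w$ satisfies $u+v=0$, so $\mathrm{gr}_{(-w,w)}(D) = D$ and no symbol variables $\xi$ appear).

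The key step is then: since $f \in \mathrm{Sol}(I)$ we have $P \bullet f = 0$, hence every graded piece of $P\bullet f$ vanishes, in particular the lowest one. Therefore $\mathrm{in}_{(-w,w)}(P) \bullet \mathrm{in}_w(f) = 0$. The one subtlety I would address carefully is that the lowest $w$-degree term of $P \bullet f$ really does equal $\mathrm{in}_{(-w,w)}(P)\bullet \mathrm{in}_w(f)$ and is not accidentally zero for degree reasons before we use $P \bullet f = 0$ — this is where the assumption that $f$ genuinely has an initial series $\mathrm{in}_w(f)$ (so that $\mathrm{in}_w(f) \neq 0$ and the expansion is well defined in the sense of \cite[Definition 2.5.4]{als:SST}) enters, together with the fact that applying the top-weight part of $P$ to a nonzero homogeneous series either gives zero or gives something of the predicted degree, and in the latter case no cancellation with contributions from lower-weight monomials of $P$ or higher-order terms of $f$ can occur because those live in strictly higher $w$-degree.

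Concretely the steps are: (1) set up the $w$-grading on formal power/Puiseux series and verify $\deg_w(x^a\partial^b \bullet g) = \deg_w(g) + (a,b)\cdot(-w,w)$ for $w$-homogeneous $g$; (2) write $f = \mathrm{in}_w(f) + (\text{higher } w\text{-order})$ and $P = \mathrm{in}_{(-w,w)}(P) + (\text{lower-weight terms})$, and expand $P \bullet f$ as a sum of graded contributions; (3) identify the minimal $w$-degree contribution as exactly $\mathrm{in}_{(-w,w)}(P) \bullet \mathrm{in}_w(f)$, noting that every other contribution has strictly larger $w$-degree; (4) conclude from $P \bullet f = 0$ that this minimal piece vanishes; (5) let $P$ range over $I$ to obtain $\mathrm{in}_w(f) \in \mathrm{Sol}(\mathrm{in}_{(-w,w)}(I))$.

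The main obstacle is step (3): making rigorous the claim that there is no cancellation, i.e., that the grading argument is not spoiled by the non-commutativity of $D$ (moving $\partial$'s past $x$'s produces lower-order correction terms, but one checks these corrections only decrease the $w$-weight of the operator, hence only increase the $w$-degree of the image, so they land among the ``higher-order'' terms and do not interfere). I would handle this by working monomial-by-monomial: for a single normal monomial $x^a\partial^b$ and a single $w$-homogeneous term of $f$, the image is again $w$-homogeneous of the predicted degree, and summing over all monomials of $P$ and all terms of $f$, the unique lowest-degree surviving contributions are those pairing a top-weight monomial of $P$ with $\mathrm{in}_w(f)$. This is essentially the content of \cite[Theorem 2.5.5]{als:SST}, whose argument I would follow.
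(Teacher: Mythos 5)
Your argument is correct, and it is in substance the standard proof (which, as you yourself note, is essentially the argument behind \cite[Theorem~2.5.5]{als:SST}). Be aware that the paper itself does not give a proof of this proposition: it only states it after pointing to \cite[Definition~2.5.4]{als:SST} for the meaning of $\mathrm{in}_w(f)$, so there is no in-text argument to compare against, and following the SST treatment is exactly the right thing to do.

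Two small comments. First, there is a sign slip in your preliminary formula: you write that $x^a\partial^b$ shifts the $w$-degree of a homogeneous series by $a\cdot w - b\cdot w = (a,b)\cdot(-w,w)$. The degree shift is indeed $a\cdot w - b\cdot w$ (the monomial sends $x^c$ to a scalar times $x^{c+a-b}$), but that quantity equals $-(a,b)\cdot(-w,w)$, not $(a,b)\cdot(-w,w)$; the two sides of your displayed equality differ by an overall sign. This does not propagate into an error, because your next sentence is stated correctly: the monomials of \emph{maximal} weight $(a,b)\cdot(-w,w)$ are exactly those that shift the $w$-degree the \emph{least}, so they pair with $\mathrm{in}_w(f)$ to produce the minimal $w$-degree piece of $P\bullet f$. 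Still, in an argument whose entire content is careful sign-and-grading bookkeeping, it is worth getting that line right. Second, the worry about non-commutativity in your step (3) is something of a red herring, though harmless. Since $P$ is already expanded in the normal basis $x^a\partial^b$, and the action of such a normal monomial on a power series is given directly by the calculus rule, there are no commutator correction terms to chase at that stage. Where the non-commutativity genuinely matters is earlier, in making $\mathrm{in}_{(-w,w)}$ well-defined: the defining relation $\partial_i x_i - x_i\partial_i = 1$ is homogeneous of weight $0$ under $(-w,w)$, which is precisely why $\mathrm{gr}_{(-w,w)}(D)=D$, as you correctly observe. Apart from these cosmetic points, the decomposition into steps (1)--(5) is sound, the case analysis (either $\mathrm{in}_{(-w,w)}(P)\bullet\mathrm{in}_w(f)=0$ outright, or it is the nonzero lowest-degree piece of $P\bullet f=0$, a contradiction) is the right way to handle the potential vanishing, and the final passage from individual $P\in I$ to the $\mathbb{C}$-span $\mathrm{in}_{(-w,w)}(I)$ is exactly what the paper's definition of the initial ideal requires.
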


This fact is reminiscent of the use of {\em tropical geometry} \cite{als:MS} in solving
polynomial equations. The tropical limit of an ideal in 
a (Laurent) polynomial ring is given by a monomial-free
initial ideal. The zeros of the latter furnish the starting terms in a series solution
of the former, where each series is a scalar in a field like
the Puiseux series or the $p$-adic numbers.

\smallskip 

We now focus on the polynomial subring
$\mathbb{C} [\theta] = \mathbb{C} [\theta_1,\ldots,\theta_n]$ of~$D$.

\begin{defn} 
	The \emph{distraction} of a $D$-ideal $I$ is the polynomial ideal 
	\[ \,\widetilde{I} \,\,\coloneqq\,\, RI \cap \mathbb{C} [\theta].\]
\end{defn}

By definition, $\widetilde{I}$ is contained in the Weyl closure $W(I)$.
If $I$ is torus-fixed, then $I$ and~$\widetilde{I}$ have the same Weyl closure,
$W(I) = W(\widetilde{I})$, by Proposition~\ref{als:prop3cond}.
This  means that all the classical solutions to the torus-fixed $D$-ideal $I$
are  represented by an ideal in the commutative polynomial ring $\mathbb{C} [\theta]$.

\begin{thm}\label{als:thmdistrac}
	Let $I$ be a torus-fixed ideal, with generators $\,x^ap(\theta)\partial^b\,$
	for various $a,b\in \mathbb{N}  ^{n}$. The distraction $\widetilde{I}$  is generated by 
	the corresponding polynomials
	$\,[\theta]_b \cdot p(\theta-b)$. Here we use the following notation for falling factorials:
	$$ [\theta]_b \,\,=\,\, \prod_{i=1}^n \prod_{j=0}^{b_i-1}(\theta_i-j). $$
\end{thm}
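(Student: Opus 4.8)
The plan is to reduce the statement to a single-generator computation and then verify that computation by hand.

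\medskip

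\textbf{Reduction to one generator.}
First I would note that the distraction $\widetilde{I} = RI\cap \mathbb{C}[\theta]$ is additive in the generators in the following weak sense: if $I = \langle g_1,\dots,g_r\rangle$ with each $g_k = x^{a_k}p_k(\theta)\partial^{b_k}$, then the ideal $J$ generated by the claimed polynomials $[\theta]_{b_k}\,p_k(\theta - b_k)$ is clearly contained in $\widetilde{I}$, provided I can show each individual element $[\theta]_b\,p(\theta-b)$ lies in $R\cdot x^ap(\theta)\partial^b$. Conversely, to get $\widetilde{I}\subseteq J$ I would use that $I$ is torus-fixed, so by Proposition~\ref{als:prop3cond}(b) we have $I = \text{in}_{(-w,w)}(I)$ for generic $w$; this forces every element of $RI\cap\mathbb{C}[\theta]$ to be built from the torus-homogeneous pieces, i.e.\ it suffices to understand, for each generator separately, which polynomials in $\theta$ lie in $Rx^ap(\theta)\partial^b\cap\mathbb{C}[\theta]$. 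So the whole theorem comes down to the single-generator identity
\[
R\,x^a p(\theta)\partial^b \,\cap\, \mathbb{C}[\theta] \,\,=\,\, \mathbb{C}[\theta]\cdot [\theta]_b\,p(\theta-b).
\]

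\medskip

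\textbf{The key commutation identities.}
The engine is the way $x_i$, $\partial_i$, $\partial_i^{b_i}$, and $x_i^{b_i}$ interact with a polynomial in $\theta_i$. Using $\partial_i x_i = x_i\partial_i + 1$ one gets the shift relations
\[
\partial_i\, q(\theta_i) \,=\, q(\theta_i+1)\,\partial_i, \qquad x_i\, q(\theta_i) \,=\, q(\theta_i-1)\, x_i,
\]
for any univariate polynomial $q$, and iterating,
\[
\partial_i^{\,b_i} q(\theta_i) \,=\, q(\theta_i+b_i)\,\partial_i^{\,b_i}, \qquad x_i^{\,b_i}\partial_i^{\,b_i} \,=\, [\theta_i]_{b_i} \,=\, \theta_i(\theta_i-1)\cdots(\theta_i-b_i+1).
\]
(The last one is the standard fact that $x\partial\cdot x\partial = x(x\partial+1)\partial$, etc., proved by a quick induction.) From these, multiplying $x^ap(\theta)\partial^b$ on the left by $x^b$ and using $x^b\,x^a = x^{a+b}$ together with the shift $p(\theta)$ past $x^b$, one computes
\[
x^b \cdot x^a p(\theta)\partial^b \;=\; x^{a}\, x^{b} p(\theta)\,\partial^{b} \;=\; x^{a}\, p(\theta-b)\, x^{b}\partial^{b} \;=\; x^{a}\, p(\theta-b)\, [\theta]_b.
\]
Since over $R$ the monomial $x^a$ is a unit, left-multiplying by $x^{-a}$ produces $p(\theta-b)[\theta]_b = [\theta]_b\,p(\theta-b)$ inside $Rx^ap(\theta)\partial^b$; this is the containment $\supseteq$. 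For $\subseteq$ I would argue that any operator of the form $r\cdot x^a p(\theta)\partial^b$ (with $r\in R$) which happens to lie in $\mathbb{C}[\theta]$ must, after pushing all $\partial$'s to the right and all $x$'s to the left and collecting $\theta$-terms, be divisible by the factor $[\theta]_b\,p(\theta-b)$ — because the $\partial^b$ on the right can only be absorbed into something $\theta$-free by supplying a matching $x^b$, which contributes exactly $[\theta]_b$, and the $p(\theta)$ sitting between the $x$'s and the $\partial$'s gets shifted to $p(\theta-b)$ in the process.

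\medskip

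\textbf{Expected main obstacle.}
The genuinely delicate point is the inclusion $\widetilde{I}\subseteq \langle [\theta]_b p(\theta-b)\rangle$, i.e.\ showing nothing \emph{extra} appears in the distraction beyond what the single generators contribute. One must rule out that $R$-linear combinations of the $g_k$ conspire, after intersecting with $\mathbb{C}[\theta]$, to produce polynomials not in the ideal generated by the $[\theta]_{b_k}p_k(\theta-b_k)$. The clean way is to exploit torus-fixedness: grade $R$ and $D$ by the $\mathbb{Z}^n$-action $t\circ$, observe that $I$ and $\mathbb{C}[\theta]$ are homogeneous of degree $0$, so $\widetilde{I}$ is recovered from the degree-$0$ part of $RI$; then the degree-$0$ component of $Rg_k$ is exactly $\mathbb{C}(x)$-spanned in a way that, upon intersecting with $\mathbb{C}[\theta]\subseteq\mathbb{C}[x]$, collapses to $\mathbb{C}[\theta]\cdot[\theta]_{b_k}p_k(\theta-b_k)$. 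Making the homogeneity bookkeeping precise — in particular checking that passing from $\mathbb{C}(x)$-coefficients back to $\mathbb{C}[\theta]$-coefficients does not lose or gain anything — is where the real care is needed; the commutator computations themselves are routine once the identities above are in place. I would also cross-check the final formula on a small example (say $n=1$, $g = x\,\theta\,\partial^2$, giving $[\theta]_2(\theta-2) = \theta(\theta-1)(\theta-2)$) to make sure the index shifts are oriented correctly.
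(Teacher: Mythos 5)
The forward inclusion is fine: your commutation identities are correct, and the computation $x^b\cdot x^a p(\theta)\partial^b = x^a\,p(\theta-b)\,[\theta]_b$ does show $[\theta]_b\,p(\theta-b)\in R\,g\subseteq RI$, so the ideal $J$ generated by the claimed polynomials lies inside $\widetilde I$.

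The other inclusion is where your argument has two real gaps, and you correctly sense the first one yourself. First, the step ``it suffices to understand, for each generator separately, which polynomials in $\theta$ lie in $Rx^ap(\theta)\partial^b\cap\mathbb{C}[\theta]$'' is not a consequence of torus-fixedness stated at that level of generality: even granting the single-generator identity, one still needs $\bigl(\sum_k Rg_k\bigr)\cap\mathbb{C}[\theta] = \sum_k\bigl(Rg_k\cap\mathbb{C}[\theta]\bigr)$, and eliminations do not commute with sums of ideals in general (compare $\langle xy-1\rangle\cap\mathbb{C}[x]=0=\langle y\rangle\cap\mathbb{C}[x]$ but $\langle xy-1,\,y\rangle\cap\mathbb{C}[x]=\mathbb{C}[x]$). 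Second, the single-generator $\subseteq$ argument --- ``the $\partial^b$ on the right can only be absorbed into something $\theta$-free by supplying a matching $x^b$'' --- is not a proof; nothing in it controls which $r\in R$ can appear, and $R$ is not $\mathbb{Z}^n$-graded (elements like $(1+x_1)^{-1}$ have no finite homogeneous decomposition), so your proposed grading bookkeeping cannot be carried out inside $R$ as written.

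There is a shortcut that dissolves both difficulties at once and that your own identities already contain. In $R$ one has $\partial^b = x^{-b}[\theta]_b$, hence
\[
x^a p(\theta)\partial^b \;=\; x^a\,p(\theta)\,x^{-b}[\theta]_b \;=\; x^{a-b}\,p(\theta-b)\,[\theta]_b \;=\; x^{a-b}\,h(\theta),
\qquad h \coloneqq [\theta]_b\,p(\theta-b).
\]
Since $x^{a-b}$ is a unit in $R$, $R\,g_k = R\,h_k$ for each generator, so $RI=\sum_k Rh_k$ and there is nothing mysterious to decode: over $R$, each generator \emph{is} the claimed polynomial. It remains to show $\bigl(\sum_k Rh_k\bigr)\cap\mathbb{C}[\theta]=\sum_k\mathbb{C}[\theta]h_k$, and this is where the grading argument should actually live --- in $D$, not in $R$. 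Given $q(\theta)=\sum_k r_k h_k$ with $r_k\in R$, clear denominators to get $f\in\mathbb{C}[x]\setminus\{0\}$ with $fq\in\sum_k Dh_k$. The ideal $\sum_k Dh_k$ is $\mathbb{Z}^n$-graded because each $h_k$ sits in degree $0$, and $fq=\sum_c\alpha_c\,x^c q$ is a sum of homogeneous pieces of distinct degrees $-c$; so each $x^cq$ with $\alpha_c\neq 0$ lies in $\sum_k Dh_k$. Its degree-$(-c)$ component is taken from $D_{-c}=x^c\,\mathbb{C}[\theta]$, giving $x^cq=\sum_k x^c s_k(\theta)h_k$ with $s_k\in\mathbb{C}[\theta]$; cancel $x^c$ (the Weyl algebra is a domain) to conclude $q\in\sum_k\mathbb{C}[\theta]h_k$. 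This simultaneously proves the single-generator identity and handles all generators together, so no separate reduction is needed.
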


It is instructive to study the case when $I$ is generated by an Artinian
monomial ideal in $\mathbb{C} [\partial]$. In this case, $\widetilde{I}$ is the radical ideal
whose zeros are the nonnegative lattice points under the staircase diagram of $I$.

\begin{ex}[$n=2$]\label{als:staircase}
	Let $I = \langle \partial_1^3,\partial_1\partial_2,\partial_2^2\rangle$. 
	The points under the staircase diagram of $I$ are
	$(0,0),(1,0),(2,0),(0,1)$. The distraction is the radical ideal
	\[ \begin{matrix} \widetilde{I} & = &
	\bigl\langle \, \theta_1 (\theta_1-1)(\theta_1-2)\,,\,
	\theta_1\theta_2\,,\,\theta_2(\theta_2-1)\,\bigr\rangle \qquad  \qquad \\
	& = &  
	\langle \theta_1, \theta_2 \rangle \,\cap \,
	\langle \theta_1-1, \theta_2 \rangle \,\cap \,
	\langle \theta_1-2, \theta_2 \rangle \,\cap \,
	\langle \theta_1, \theta_2 -1 \rangle .
	\end{matrix} \]
	The solutions are spanned by the standard monomials of $I$. We find that
	$$ \,\text{Sol}(I)\,\,=\,\,\text{Sol}(\widetilde{I})\,\,=\,\,\mathbb{C} \left\{1,x_1,x_1^2,x_2 \right\}. $$
\end{ex}

We define a {\em Frobenius ideal} to be a $D$-ideal that is generated by
elements of the polynomial subring $\mathbb{C} [\theta]$. Thus, every torus-fixed ideal $I$ gives rise to
a Frobenius ideal $D \widetilde{I}$ that has the same classical solution space.
The solution space can be described explicitly when the given ideal in 
$\mathbb{C} [\theta]$ is Artinian.

Let $J$ be an Artinian ideal in the polynomial ring $\mathbb{C} [\theta] = \mathbb{C} [\theta_1,\ldots,\theta_n]$.
Then its  variety $V(J)$ is a finite subset of $\mathbb{C} ^n$. The primary decomposition of $J$ equals
\[ J \,\,= \bigcap_{u\in V(J)} Q_u(\theta - u), \] where $Q_u$ is an ideal
that is primary to the maximal ideal $\langle \theta_1,\ldots,\theta_n \rangle$ 
and \mbox{$Q_u(\theta-u)$} is the ideal obtained  from $Q_u$ 
by replacing $\theta_i$ with $\theta_i-u_i$ for\linebreak$i=1,\ldots,n$. 
We call $Q_u$ the primary component of $J$ at $u$. Its
\emph{orthogonal complement} is the
following finite-dimensional vector space
\begin{align*}
Q_u^{\perp} \,\,\coloneqq \,\,\left\{ \right. g\in \mathbb{C} [x_1,\ldots,x_n]  \,
\mid &  \ f(\partial_1,\ldots,\partial_n )\bullet g(x_1,\ldots,x_n) = 0\smallskip\\
 & \text{ for all } f=f(\theta_1,\ldots,\theta_n)\in Q_u \left. \right\} .
\end{align*}
In commutative algebra, the vector space $Q_u^{\perp}$ is known as the
{\em inverse system to the ideal $J$ at the given point~$u$}. Note that $Q_u^{\perp}$
is a module over $\mathbb{C} [\partial] = \mathbb{C} [ \partial_1,\ldots,\partial_n ]$.
If this module is cyclic, then $J$ is {\em Gorenstein} at $u$.
The \mbox{$\mathbb{C} $-dimension} of $Q_u^{\perp}$ is the 
{\em multiplicity} of the point $u$ as a zero of the ideal $J$.

\begin{thm}\label{als:solutionFrob}
	Given an ideal $J$ in the polynomial ring $ \mathbb{C} [\theta]$, the corresponding Frobenius ideal
	$I = D J$ in the Weyl algebra $D$ is holonomic if and only if $J$ is Artinian.
	In this case,
	$\,\text{rank} (I) = {\rm dim}_\mathbb{C}  \bigl( \mathbb{C} [\theta]/J \bigr)$, and the solution space
	$\,{\rm Sol}(I)$ is spanned by the functions
	\[ x_1^{u_1} \cdots x_n^{u_n} \cdot g( {\rm log}(x_1),\ldots,{\rm log}(x_n)), \]
	where $u \in V(J)$ and $g\in Q_u^{\perp}$ runs over
	a basis of the
	inverse system to $J$ at $u$.
\end{thm}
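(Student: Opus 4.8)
The plan is to prove the three assertions of Theorem~\ref{als:solutionFrob} in turn, reducing everything to the torus-fixed case and the explicit description of distractions in Theorem~\ref{als:thmdistrac}, together with the Cauchy--Kowalevskii--Kashiwara theorem for the rank count. First I would observe that a Frobenius ideal $I = DJ$ is automatically torus-fixed, since each generator lies in $\mathbb{C}[\theta]$ and hence has the form $x^0 p(\theta) \partial^0$; so condition~(c) of Proposition~\ref{als:prop3cond} holds, and moreover $\widetilde{I} = RI \cap \mathbb{C}[\theta]$ contains $J$. Conversely, since $J$ is already generated by elements with $a=b=0$, Theorem~\ref{als:thmdistrac} gives that $\widetilde{I}$ is generated by the same polynomials $p(\theta)$, i.e.\ $\widetilde{I} = J$ (more precisely $\sqrt{J}\subseteq\widetilde I$ is not needed here; the point is simply that distraction acts trivially when $b=0$).

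For the equivalence ``$I$ holonomic $\iff$ $J$ Artinian'', I would compute the characteristic ideal. For a single generator $p(\theta) = p(x_1\partial_1,\ldots,x_n\partial_n)$, the symbol ${\rm in}_{(0,e)}(p(\theta))$ is obtained by keeping only the top-degree part in $\xi$: since $\theta_i = x_i\partial_i$ has symbol $x_i\xi_i$, the symbol of $p(\theta)$ is $p_{\rm top}(x_1\xi_1,\ldots,x_n\xi_n)$, where $p_{\rm top}$ is the leading form of $p$. Working this out for a Gr\"obner basis of $J$, one sees that $\mathrm{ch}(I)$ lives in the subring generated by the products $x_i\xi_i$, so $\mathrm{Char}(I)$ always contains the ``diagonal-type'' coordinate subspaces $\{x_i = 0\} \times \{\text{everything}\}$ and the like; a clean way to package this is that $\dim \mathrm{Char}(I) = n + \dim V(J_{\rm top})$ where $J_{\rm top}$ is generated by the leading forms. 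Then $I$ is holonomic, i.e.\ $\dim\mathrm{Char}(I) = n$, exactly when $V(J_{\rm top}) = \{0\}$, which is equivalent to $\mathbb{C}[\theta]/J$ being finite-dimensional, i.e.\ $J$ Artinian. The rank formula $\mathrm{rank}(I) = \dim_\mathbb{C}(\mathbb{C}[\theta]/J)$ then follows because, after inverting the $x_i$, the operator $\theta_i = x_i\partial_i$ is a unit times $\partial_i$-data in a way that makes $R/RI$ and $R/R\widetilde I = R/RJ$ have a common Gr\"obner-standard-monomial count equal to the number of standard monomials of $J$ in $\mathbb{C}[\theta]$.

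For the solution space, the strategy is: (i) reduce to a single primary component via the primary decomposition $J = \bigcap_{u\in V(J)} Q_u(\theta-u)$, using that $\mathrm{Sol}$ turns intersections of ideals into sums of solution spaces when the components are coprime; (ii) for one component $Q_u(\theta - u)$, make the substitution $x_i \mapsto e^{s_i}$ (equivalently work with $\log x_i$), under which $\theta_i = x_i\partial_i$ becomes $\partial/\partial s_i$, so that $Q_u(\theta-u)$-solutions in the $x$-variables correspond to solutions of the constant-coefficient ideal $Q_u(\partial_s - u)$ in the $s$-variables; (iii) recall Example~\ref{als:exreplacing} and the standard fact that solutions of a constant-coefficient PDE ideal $Q$ are $e^{u\cdot s}$ times polynomials in the inverse system $Q^\perp$ — this is exactly the Macaulay duality between $\mathbb{C}[\partial]$-modules and $\mathbb{C}[x]$-modules that the paragraph before the theorem sets up. Translating back, $e^{u\cdot s}g(s) \mapsto x^u \, g(\log x_1,\ldots,\log x_n)$ with $g\in Q_u^\perp$, which is the claimed basis. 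Finally I would check the dimension count matches: $\sum_{u} \dim_\mathbb{C} Q_u^\perp = \sum_u (\text{multiplicity of }u) = \dim_\mathbb{C}(\mathbb{C}[\theta]/J) = \mathrm{rank}(I)$, consistent with Cauchy--Kowalevskii--Kashiwara.

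The main obstacle I expect is step (iii) combined with the care needed in step (i): one must justify that the purely formal substitution $x_i = e^{s_i}$ gives an honest bijection on the relevant function spaces (it does, on a simply connected domain avoiding the coordinate hyperplanes, since that is where $\log$ is single-valued — matching the hypothesis of Cauchy--Kowalevskii--Kashiwara applied to $\mathrm{Sing}(I)$, which here is contained in the union of coordinate hyperplanes), and that the inverse-system/Macaulay-duality statement for constant-coefficient operators is being used in the correct direction, namely that $\mathrm{Sol}_{\mathbb{C}[x]}(Q) = Q^\perp$ as subspaces of the polynomial ring and that there are no further solutions among, say, exponential-polynomial functions once we have fixed the homogeneity forced by the $\theta$-structure. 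The linear-algebra bookkeeping — leading forms, standard monomials, and the additivity of multiplicities — is routine once the logarithmic change of variables is in place.
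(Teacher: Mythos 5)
The paper states this theorem without proof (it is Theorem~2.3.11 of Saito--Sturmfels--Takayama), so I can only assess your argument on its own terms. Your overall strategy is the standard and correct one: characteristic-variety bound plus the Fundamental Theorem of Algebraic Analysis for holonomicity, Gr\"obner/standard-monomial comparison for the rank, and the logarithmic change of variables $x_i = e^{s_i}$ together with Macaulay duality (inverse systems) for the description of $\mathrm{Sol}(I)$, closed by a dimension count against Cauchy--Kowalevskii--Kashiwara. The endgame you sketch in (i)--(iii) is precisely right once you also note that the functions $x^u g(\log x)$ for distinct $u$ and linearly independent $g \in Q_u^\perp$ are linearly independent, so the dimension count forces them to be a basis.

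The place where your write-up has a genuine gap is the characteristic-ideal computation. First, a small but real error: $\mathrm{ch}(I)$ does not ``live in the subring generated by the $x_i\xi_i$'' --- as an ideal of $\mathbb{C}[x,\xi]$ it certainly does not; what is true is that it is \emph{generated} by elements of that subring. Second, and more importantly, the asserted equality $\dim\mathrm{Char}(I) = n + \dim V(J_{\mathrm{top}})$ requires knowing that $\mathrm{ch}(I)$ is \emph{exactly} $\langle q_{\mathrm{top}}(x_1\xi_1,\ldots,x_n\xi_n): q\in J\rangle$ and not larger, i.e.\ that the $q(\theta)$ form a Gr\"obner basis of $DJ$ for the weight $(0,e)$. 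This is not automatic from listing symbols of generators --- leading forms of a $D$-combination could in principle cancel. It is true here, but the reason is the $\mathbb{Z}^n$-grading of $D$ by $\deg x_i = e_i$, $\deg\partial_i = -e_i$: since $J$ sits in degree~$0$, the ideal $DJ$ is $\mathbb{Z}^n$-homogeneous with $(DJ)_d = x^{d^+}\partial^{d^-} J$, and symbols of homogeneous pieces of distinct degree cannot cancel. You should supply this (or an equivalent good-filtration argument). For the forward implication (``$J$ Artinian $\Rightarrow I$ holonomic'') only the containment $\mathrm{ch}(I) \supseteq \langle q_{\mathrm{top}}(x\xi)\rangle$ is needed, which gives $\dim\mathrm{Char}(I)\le n$ and then $=n$ by FTAA; for the converse it is cleaner to bypass the exact characteristic-ideal formula and argue: $I$ holonomic $\Rightarrow \mathrm{rank}(I)<\infty$, and your rank formula then forces $\dim_{\mathbb{C}}\mathbb{C}[\theta]/J <\infty$. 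That rank formula in turn deserves more than a gesture: one must check that a Gr\"obner basis of $J$ in $\mathbb{C}[\theta]$ remains a Gr\"obner basis of $RJ$ in $R=\mathbb{C}(x)\langle\theta\rangle$, which requires handling the non-commutativity $\theta_i r(x) = r(x)\theta_i + x_i\,\partial r/\partial x_i$. None of these are wrong turns --- they are gaps in justification of steps whose conclusions are correct.
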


The theorem implies that the space of purely logarithmic solutions $g$ is given by the primary
component at the origin. Hence, it is interesting to study ideals that are
primary to $\langle \theta_1,\ldots,\theta_n \rangle$. 

\begin{ex}[$n=3$] The ideal
	$\,J=\langle \theta_1+\theta_2+\theta_3,\theta_1\theta_2+
	\theta_1\theta_3+\theta_2\theta_3,\theta_1\theta_2\theta_3\rangle\,$ 
	is generated by non-constant symmetric polynomials. 
	It is primary to $\langle \theta_1, \theta_2,\theta_3 \rangle$.
	We have $\mathcal{V}(J)=\{(0,0,0)\}$, with $\text{rank} (J)=6$.
	The inverse system is the \mbox{$6$-dimensional} space spanned
	by all polynomials that are 
	successive partial derivatives of $(x_1-x_2)(x_1-x_3)(x_2-x_3)$.
	This implies
	\[ \text{Sol}(J)\,\,=\,\,\mathbb{C} [\theta] \bullet \left(\log\left(\frac{x_1}{x_2}\right) \cdot \log
	\left(\frac{x_1}{x_3}\right)\cdot \log\left(\frac{x_2}{x_3}\right) \right) \,\, \cong \,\, \mathbb{C} ^6. \]
	Gorenstein ideals like $J$ arise in  applications of
	$D$-modules to mirror symmetry. 
	Some original sources for this connection are referred to in
\cite[page 150]{als:SST}.

\end{ex}

The term ``Frobenius ideal'' is a reference to the Frobenius method. This is a classical 
method for solving linear ODEs. The next theorem extends this
to PDEs. The role of the indicial polynomial
is now played by the indicial ideal.

\begin{thm}\label{als:indicial}
	Let $I$ be any holonomic $D$-ideal and $w\in \mathbb{R} ^n$ generic. The \emph{indicial ideal} 
	\[ \text{ind} _w(I) \,\,\coloneqq \,\,\widetilde{\text{in} _{(-w,w)}(I) }\]
	is a holonomic Frobenius ideal. Its rank equals the rank of $\,\text{in} _{(-w,w)}(I)$.
	This is bounded above by $\text{rank} (I)$, with equality when
	the $D$-ideal $I$ is regular holonomic.
\end{thm}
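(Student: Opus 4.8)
\emph{Proof plan.} Write $J \coloneqq \text{in}_{(-w,w)}(I)$, so that $\text{ind}_w(I)$ is the distraction $\widetilde{J} = RJ \cap \mathbb{C}[\theta]$ and $D\widetilde{J}$ is, by construction, a Frobenius ideal. Since $w$ is generic, $J$ is torus-fixed (as noted after Proposition~\ref{als:prop3cond}), so $W(J) = W(\widetilde{J})$; extending to left ideals of $R$, this gives $RJ = RW(J) = RW(\widetilde{J}) = R\widetilde{J}$, whence $\text{rank}(J) = \dim_{\mathbb{C}(x)}(R/RJ) = \dim_{\mathbb{C}(x)}(R/R\widetilde{J})$. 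This last number equals $\dim_{\mathbb{C}}(\mathbb{C}[\theta]/\widetilde{J})$: by Theorem~\ref{als:solutionFrob} when $\widetilde{J}$ is Artinian, and otherwise because $V(\widetilde{J})$ is then infinite, the power monomials $\{x^u : u \in V(\widetilde{J})\}$ furnish infinitely many $\mathbb{C}$-independent solutions of $D\widetilde{J}$, and both sides are infinite. Thus, granting the bound $\text{rank}(J) \le \text{rank}(I)$ (which holds since $I$ is holonomic, hence of finite rank), the ideal $\widetilde{J}$ is Artinian; by Theorem~\ref{als:solutionFrob}, $\text{ind}_w(I) = D\widetilde{J}$ is then a holonomic Frobenius ideal, and $\text{rank}(\text{ind}_w(I)) = \dim_{\mathbb{C}}(\mathbb{C}[\theta]/\widetilde{J}) = \text{rank}(J) = \text{rank}(\text{in}_{(-w,w)}(I))$. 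This proves the first two assertions and reduces the theorem to the bound $\text{rank}(J) \le \text{rank}(I)$ and to equality when $I$ is regular holonomic.

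The bound is the substantial step; I would prove it by Gr\"obner deformation, following \cite[Chapter~2]{als:SST}. By definition $\text{rank}(I) = \dim_{\mathbb{C}(x)}(\mathbb{C}(x)[\xi]/\mathbb{C}(x)[\xi]\,\text{ch}(I)) < \infty$, and likewise for $J$. Iterating initial ideals --- admissible exactly because $w$ is generic, so that the intermediate weights stay interior to a single Gr\"obner cone --- one obtains $\text{ch}(J) = \text{in}_{(0,e)}(\text{in}_{(-w,w)}(I)) = \text{in}_{(-w,\,w + \varepsilon e)}(I)$ for $0 < \varepsilon \ll 1$, exhibiting $\text{ch}(J)$ as itself an initial ideal of $I$. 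Passing to the field $\mathbb{C}(x)$, over which the $x_i$ are units so that only the weight on $\xi$ is felt, one identifies --- and this is the delicate point --- the quotient $\mathbb{C}(x)[\xi]/\mathbb{C}(x)[\xi]\,\text{ch}(J)$ with the associated graded ring, for a weight filtration, of the finite-dimensional $\mathbb{C}(x)$-algebra $\mathbb{C}(x)[\xi]/\mathbb{C}(x)[\xi]\,\text{ch}(I)$. Since passing to an associated graded never increases $\mathbb{C}(x)$-dimension --- the induced filtration on the quotient can fail to be separated, which is precisely the mechanism by which ``mass escapes to $\xi = \infty$'' and the rank strictly drops --- we conclude $\text{rank}(J) \le \text{rank}(I)$. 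This drop is visible already for $I = \langle x^2\partial + 1 \rangle$, where $\text{rank}(I) = 1$ but $\text{in}_{(-1,1)}(I) = \langle 1 \rangle$ has rank $0$.

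When $I$ is in addition regular holonomic, equality follows from the classical Frobenius method. By Cauchy--Kowalevskii--Kashiwara, $\text{rank}(I) = \dim_{\mathbb{C}}\text{Sol}(I)$; and since solutions depend only on the Weyl closure, $\text{rank}(J) = \dim_{\mathbb{C}}\text{Sol}(J) = \dim_{\mathbb{C}}\text{Sol}(\text{ind}_w(I))$, a space that Theorem~\ref{als:solutionFrob} describes explicitly as spanned by the functions $x^u g(\log x)$ with $u \in V(\widetilde{J})$ and $g$ running over the relevant inverse systems. Regular holonomicity guarantees that each such initial solution extends --- by adjoining higher-order terms, as in~\eqref{als:eqfrobf} --- to a unique convergent solution of $I$ on a simply connected open subset of $\mathbb{C}^n \setminus \text{Sing}(I)$; this yields a $\mathbb{C}$-linear map $\text{Sol}(\text{ind}_w(I)) \to \text{Sol}(I)$ that is a one-sided inverse of $f \mapsto \text{in}_w(f)$ from Proposition~\ref{als:propdistrac}, and hence is injective. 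Therefore $\text{rank}(J) \ge \text{rank}(I)$, and combined with the previous paragraph, $\text{rank}(\text{in}_{(-w,w)}(I)) = \text{rank}(I)$. (Genericity of $w$ enters here too, to rule out integer-difference resonances among the exponents $u$ that would obstruct the termwise construction.)

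\emph{Main obstacle.} The genuinely hard part is the rank inequality $\text{rank}(\text{in}_{(-w,w)}(I)) \le \text{rank}(I)$. Its subtlety is that the passage from $I$ to $\text{in}_{(-w,w)}(I)$ is \emph{not} a flat one-parameter degeneration --- the rank really can decrease --- so semicontinuity of fiber dimension is of no use; the comparison must instead be carried through the calculus of iterated initial ideals (valid only for generic $w$) and the associated-graded/separatedness bookkeeping over $\mathbb{C}(x)$ sketched above, and verifying that base change from $\mathbb{C}[x]$ to $\mathbb{C}(x)$ is compatible with these initial-ideal operations is the one step demanding real care.
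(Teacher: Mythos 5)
The paper gives no proof of Theorem~\ref{als:indicial}; it refers to~\cite[Theorem~2.5.1]{als:SST}, so there is no internal argument to compare against. Your plan has the right skeleton and does match the route of~\cite{als:SST} in outline: reduce everything to the inequality $\text{rank}(\text{in}_{(-w,w)}(I)) \le \text{rank}(I)$, use torus-fixedness and the Weyl-closure comparison to transfer to the distraction, invoke Theorem~\ref{als:solutionFrob} for the Artinian/holonomic bookkeeping, and use the Frobenius method to get equality in the regular case. But the one step you correctly flag as the crux is where the argument actually breaks.

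The claimed identification of $\mathbb{C}(x)[\xi]/\mathbb{C}(x)[\xi]\,\text{ch}(J)$ with \emph{the} associated graded ring of the finite-dimensional $\mathbb{C}(x)$-algebra $\mathbb{C}(x)[\xi]/\mathbb{C}(x)[\xi]\,\text{ch}(I)$ is false, and your own example refutes it. For $I = \langle x^2\partial + 1\rangle$ one has $\mathbb{C}(x)[\xi]\,\text{ch}(I) = \langle \xi \rangle$ and $\mathbb{C}(x)[\xi]\,\text{ch}(J) = \mathbb{C}(x)[\xi]$, so the first quotient is the one-dimensional space $\mathbb{C}(x)$ and the second is $0$. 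No $\xi$-weight filtration on $\mathbb{C}(x)[\xi]$ induces a non-separated filtration on a quotient that is concentrated in degree zero, so $0$ is not an associated graded of $\mathbb{C}(x)$; ``mass escaping to $\xi = \infty$'' cannot be the mechanism. What is actually true, and what the proof needs, is a one-sided \emph{inclusion} of ideals over $\mathbb{C}(x)[\xi]$ --- roughly, that $\mathbb{C}(x)[\xi]\,\text{ch}(J)$ \emph{contains} the commutative initial ideal of $\mathbb{C}(x)[\xi]\,\text{ch}(I)$, with strict containment precisely causing the rank drop --- combined with the flatness of commutative Gr\"obner degeneration over the field $\mathbb{C}(x)$; establishing that containment is where the real work lies (in~\cite{als:SST} it is done through a careful comparison of Gr\"obner bases of $RI$ and $R\,\text{in}_{(-w,w)}(I)$), and your sketch does not supply it. Two smaller points: the ``one-sided inverse, hence injective, therefore $\text{rank}(J)\ge\text{rank}(I)$'' chain in the regular-holonomic paragraph is internally inconsistent as written (injectivity of the lifting map gives the wrong inequality; what you need is that $\text{in}_w$, after a Gaussian-elimination argument, is defined and injective on all of $\text{Sol}(I)$ \emph{because} regularity rules out essentially singular solutions like $e^{1/x}$ that have no initial series), and the equality $\dim_{\mathbb{C}(x)}(R/R\widetilde{J}) = \dim_{\mathbb{C}}(\mathbb{C}[\theta]/\widetilde{J})$ in your first paragraph is being quietly invoked in both directions and deserves its own justification beyond the reference to Theorem~\ref{als:solutionFrob}.
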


We refer to \cite[Section 2.4]{als:SST} for the definition of {\em regular holonomic}
and \cite[Theorem~2.5.1]{als:SST} for this result.
The indicial ideal $\text{ind} _w(I)$ is computed from $I$ by means of
Gr\"obner bases in $D$. This computation identifies
the leading terms in a basis of regular series solutions for~$I$.

\begin{ex}[$n=1$]\label{als:exwithlog}
	We illustrate Theorem~\ref{als:indicial} for the
	ODE in Example~\ref{als:exfrobn=1}.
	Let $I = \langle x^2 \partial^2 - x \partial + 1 - x \rangle$
	and set $w = 1$. The indicial ideal 
	$\, {\rm ind}_w(I) \,$ is the principal ideal in~$\mathbb{C} [\theta]$ 
	generated by $\theta^2 - 2 \theta + 1$.
	This polynomial has the unique root $u=1$ with
	multiplicity $2$. Hence, we obtain a basis of
	series solutions to $I$ which 
	take the form $x +  \cdots $ and $x \cdot {\rm log}(x) + \cdots$.
\end{ex}

\section{Volumes}
In calculus, we learn about definite integrals in order
to determine the area under a graph.
Likewise, in multivariable calculus, we examine the volume enclosed by a surface.
We are here interested in areas and volumes of semi-algebraic sets.
When these sets depend on one or more parameters, their volumes are
holonomic functions of the parameters. We explain what this means
and how it can be used for highly accurate evaluation of volume functions.

Suppose that $M$ is a $D$-module. We say that $M$
is {\em torsion-free} if it is torsion-free as a module over the polynomial ring
$\mathbb{C} [x] = \mathbb{C}  [x_1,\ldots,x_n]$.
In our applications, $M$ is usually a space of infinitely
differentiable or holomorphic  functions on a simply connected open set
in $\mathbb{R} ^n$ or $\mathbb{C} ^n$. Such $D$-modules are always torsion-free.
For a function $f \in M$, its  {\em annihilator}  is the $D$-ideal
\[ \text{Ann}_D\left( f \right) \,\,\coloneqq \,\,\left\{\, P\in D \mid P \bullet f =0 \,\right\}. \]
In general, it is a non-trivial task to compute the annihilating ideal. But, in some cases, 
computer algebra systems can help us to compute holonomic annihilating ideals. 
For rational functions $r \in \mathbb{Q} (x)$ this can be done using the package 
{\tt Dmodules} \cite{als:LT} in {\tt Macaulay2} 
with a built-in command as follows:
\begin{verbatim}
needsPackage "Dmodules"; 
D = QQ[x1,x2,d1,d2, WeylAlgebra => {x1=>d1,x2=>d2}];
rnum = x1;  rden = x2;   I = RatAnn(rnum,rden)
\end{verbatim}
Users of {\tt Singular} can do this with the library {\tt dmodapp.lib} \cite{als:AL}:
\begin{verbatim}
LIB "dmodapp.lib";
ring s=0,(x1,x2),dp; setring s;
poly rnum=x1; poly rden=x2;
def an=annRat(rnum,rden); setring an;
LD; 
\end{verbatim}
When you run this code, 
do try your own choice of numerator {\tt rnum} and 
denominator {\tt rden}. These should be polynomials
in the unknowns {\tt x1} and {\tt x2}. In our example we learn that $r = x_1/x_2$ has the annihilator
\[ {\rm Ann}_D(r) \,\,=\,\, \langle \,\partial_1^2 , \, x_1 \partial_1 - 1, \,
(x_2 \partial_2 + 1) \partial_1 \,\rangle . \]

Suppose now that $f(x_1,\ldots,x_n)$ is an algebraic function.
This means that $f$ satisfies some polynomial 
equation $F(f,x_1,\ldots,x_n)=0$. Using the polynomial $F$ as its input,
the {\tt Mathematica} package {\tt HolonomicFunctions} \cite{als:CK} can compute
a holonomic representation of $f$. In the univariate case, the output
is a linear differential operator of lowest degree annihilating $f$,
see Example~\ref{als:exmakeunique}.

Let $M$ be any $D$-module and 
$f\in M$. We say that $f$ is {\em holonomic} 
if ${\rm Ann}_D(f)$ is a holonomic $D$-ideal. If $f$ is an infinitely differentiable
function on an open subset of $\mathbb{R} ^n$ or $\mathbb{C} ^n$, then we refer to $f$ as a
{\em holonomic function}.

\begin{alsproposition}[\cite{als:GLS}]\label{als:holon}
	Let $f$ be an element in a torsion-free $D$-module $M$.
	Then the following three conditions are equivalent: \vspace{-0.05in}
	\begin{abclist}
		\item $f$ is holonomic, \vspace{-0.1in}
		\item $\text{rank}\left( \text{Ann}_{D}(f ) \right) < \infty$,\vspace{-0.1in}
		\item for each $i \in \{1,\ldots,n\}$ there exists an
		operator $P_i\in \mathbb{C}  [x_1,\ldots,x_n]\langle \partial_i \rangle 
		\backslash \{ 0 \}$ that annihilates~$f$. 
	\end{abclist}
\end{alsproposition}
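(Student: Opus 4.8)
\emph{Plan.} We prove the cycle $(a)\Rightarrow(b)\Rightarrow(c)\Rightarrow(a)$. The implication $(a)\Rightarrow(b)$ is exactly the fact, recalled in the text, that a holonomic $D$-ideal has finite holonomic rank, so there is nothing to do. Throughout put $I\coloneqq\text{Ann}_D(f)$ and $N\coloneqq D\bullet f$, so that $N\cong D/I$; since $M$ is torsion-free over $\mathbb{C}[x]$, so is its submodule $N$. This torsion-freeness is used essentially in what follows — indeed, without it finite rank need not force holonomicity, as the text's example $\langle x_1\partial_1^2,x_1\partial_2^3\rangle$ shows.

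\emph{$(b)\Rightarrow(c)$.} If $\text{rank}(I)<\infty$ then $R/RI$ is a finite-dimensional $\mathbb{C}(x)$-vector space, so for each $i$ the powers $1,\partial_i,\partial_i^2,\dots$ are $\mathbb{C}(x)$-linearly dependent modulo $RI$; clearing denominators in such a relation produces a nonzero $P_i\in\mathbb{C}[x_1,\dots,x_n]\langle\partial_i\rangle$ lying in $RI\cap D$. That $RI\cap D\subseteq I$ follows from torsion-freeness: writing $P\in RI\cap D$ as $\sum_k s_k(x)Q_k$ with $s_k\in\mathbb{C}(x)$ and $Q_k\in I$, and multiplying by a common polynomial denominator $q\neq0$ of the $s_k$, we get $qP\in I$, hence $q\cdot(P\bullet f)=(qP)\bullet f=0$, hence $P\bullet f=0$, i.e.~$P\in I$. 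So each $P_i\in I$, which is $(c)$. (In particular $\text{Ann}_D(f)$ is Weyl-closed whenever $M$ is torsion-free; the same localization trick reappears below.)

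\emph{$(c)\Rightarrow(a)$.} We may assume $f\neq0$ (the case $f=0$ being immediate). Write $P_i=\sum_{j=0}^{d_i}p_{i,j}(x)\partial_i^{\,j}$ with $p_{i,d_i}\neq0$; since $p_{i,0}(x)\bullet f=0$ would force $f=0$ by torsion-freeness, $d_i\geq1$. In $\mathbb{C}(x)\otimes_{\mathbb{C}[x]}N$ the relation $P_i\bullet f=0$ expresses $\partial_i^{\,d_i}\bullet f$ through lower powers $\partial_i^{\,j}\bullet f$, $j<d_i$, with denominators powers of $p_{i,d_i}$. Iterating over $i$ shows the finitely many $\partial^{\,b}\bullet f$ with $b_i<d_i$ for all $i$ span $\mathbb{C}(x)\otimes N$ over $\mathbb{C}(x)$, so $\text{rank}(I)\leq\prod_i d_i<\infty$ — this already gives $(b)$. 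Picking a $\mathbb{C}(x)$-basis among these vectors, the commuting operators $\partial_1,\dots,\partial_n$ act on the finite-dimensional space $\mathbb{C}(x)\otimes N$ by matrices over $\mathbb{C}(x)$ whose common denominator is a power of $h\coloneqq\prod_i p_{i,d_i}$, and an easy induction then yields, for every multi-index $b$, an identity $\partial^{\,b}\bullet f=h^{-c|b|}v_b$ with $c$ a fixed constant and $v_b$ a $\mathbb{C}[x]$-combination of the basis vectors whose coefficients have degree $O(|b|)$. Now filter the $D$-module $N$ by $\Gamma_k\coloneqq B_k\bullet f$, where $B_k\subseteq D$ is the span of the normal monomials $x^a\partial^{\,b}$ with $|a|+|b|\leq k$; this is a good filtration with respect to the Bernstein filtration on $D$. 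Substituting the identity above, $h^{\,ck}\Gamma_k$ is contained in the $\mathbb{C}$-span of the $x^e\,(\partial^{\,b}\bullet f)$ with $b_i<d_i$ and $|e|$ bounded linearly in $k$; there are only $O(k^n)$ such, so $\dim_{\mathbb{C}}(h^{\,ck}\Gamma_k)=O(k^n)$, and since multiplication by $h^{\,ck}$ is injective on the torsion-free module $N$ we get $\dim_{\mathbb{C}}\Gamma_k=O(k^n)$. By Bernstein's theorem the dimension of $\text{Char}(I)$ equals the degree of the Hilbert polynomial $k\mapsto\dim_{\mathbb{C}}\Gamma_k$ for any good filtration, so $\dim\text{Char}(I)\leq n$; and $I$ is proper since $f\neq0$, so the Fundamental Theorem of Algebraic Analysis gives $\dim\text{Char}(I)\geq n$. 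Hence $\dim\text{Char}(I)=n$, i.e.~$I$ is holonomic, which is $(a)$ and closes the cycle.

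\emph{Main obstacle.} Everything except $(c)\Rightarrow(a)$ is formal. There, the real content is to pass from ``$f$ satisfies one ordinary differential equation per variable'' first to finiteness of $\dim_{\mathbb{C}(x)}(R/RI)$ and then to minimality of $\dim\text{Char}(I)$. The two delicate ingredients are that the coefficients produced by the iterated reduction grow only polynomially — so that a single polynomial power $h^{\,ck}$ clears every denominator appearing in $\Gamma_k$ — and that the resulting $O(k^n)$ growth of a good filtration forces $\dim\text{Char}(I)\leq n$, which uses the filtration-independence of the characteristic dimension. Torsion-freeness is what makes both work: it lets us multiply by $h^{\,ck}$ without losing information, and it is also what rendered $\text{Ann}_D(f)$ Weyl-closed in $(b)\Rightarrow(c)$.
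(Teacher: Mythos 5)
Your proof is correct, and the first two links coincide with the paper's: $(a)\Rightarrow(b)$ is standard, and your localization argument (clear denominators in a $\mathbb{C}(x)$-relation, then use torsion-freeness to conclude $RI\cap D\subseteq\text{Ann}_D(f)$) is exactly how the paper obtains Weyl-closedness. The genuine divergence is in $(c)\Rightarrow(a)$. There the paper, after noting that $\text{Ann}_D(f)$ is Weyl-closed of finite rank, simply cites \cite[Theorem~1.4.15]{als:SST}. You instead reprove that theorem from scratch via a growth estimate for the Bernstein filtration: the reduced derivatives $\partial^b\bullet f$ with $b_i<d_i$ span $\mathbb{C}(x)\otimes N$, the denominators created by iterated reduction stay powers of $h=\prod_i p_{i,d_i}$ with exponent linear in $|b|$, and torsion-freeness lets you multiply the good filtration $\Gamma_k=B_k\bullet f$ by $h^{ck}$ without losing dimension, giving $\dim_\mathbb{C}\Gamma_k=O(k^n)$. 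This is a self-contained alternative, and it isolates exactly where torsion-freeness bites. One caveat worth making explicit: your final step estimates the Hilbert function of a good filtration with respect to the Bernstein $(e,e)$-filtration on $D$, whereas $\text{Char}(I)$ in the paper is cut out by $\text{in}_{(0,e)}(I)$, the order filtration. That the two dimension invariants agree is the standard but nontrivial equivalence of Bernstein's definition of holonomicity with the characteristic-variety one; it is essentially the content of the theorem the paper cites, so you should name it as an input rather than fold it into the phrase ``by Bernstein's theorem.''
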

\begin{proof}
	Let $I= \text{Ann} _D(f)$. If~$I$ is holonomic, then
	$RI$ is a zero-dimensional ideal in~$R$, i.e., $\dim_{\mathbb{C}(x)} (R/RI)<\infty.$  This condition is
	equivalent to b) and~c). 
	For the implication from b) to a), we note that $\text{Ann} _D(f)$ is Weyl-closed, 
	since $M$ is torsion-free. Finally,
	$\text{rank}\left( \text{Ann}_{D}(f ) \right) < \infty$ implies that
	$\text{Ann} _D(f)=W(\text{Ann} _D(f))$ is holonomic by \cite[Theorem~1.4.15]{als:SST}.
\end{proof}

\begin{remark}
	Let $I = {\rm Ann}_D(f)$ be the annihilator of a holonomic function~$f$,
	and fix a point $x_0 \in \mathbb{C} ^n$ that is not in the singular locus of~$I$.
	Let $m_1,\ldots,m_n$ be the orders of  the distinguished operators 
	$P_1,\ldots,P_n \in I$  in Proposition \ref{als:holon}~c).
	Thus, $P_k$ is a differential operator in $\partial_k$ of order $m_k$
	whose coefficients are polynomials in $x_1,\ldots,x_n$.
	Suppose we  impose initial conditions
	by specifying complex numbers for the
	$m_1m_2 \cdots m_n$ quantities
	\begin{equation}
	\label{als:eqinitialcond} (\partial_1^{i_1}\cdots \partial_n^{i_n}\bullet f)\!\!\mid_{x=x_0} \quad
	\text{where} \,\,\,0 \leq i_k < m_k \,\text{ for }\, k=1,\ldots,n.
	\end{equation}
	The operators $P_1,\ldots,P_n$ together with the initial conditions
	\eqref{als:eqinitialcond} determine the function $f$ uniquely within the vector space ${\rm Sol}(I)$.
	This specification is known as a \emph{canonical holonomic representation}
	of $f$; see \cite[Section~4.1]{als:Zei}.
\end{remark}

Many interesting functions are holonomic. To begin with,
every rational function $r$ in $x_1,\ldots,x_n$ is holonomic.
This follows from Proposition~\ref{als:holon} c), since $r$ is annihilated by the operators
\begin{equation}
\label{als:eqratop1} r(x) \partial_i \,-\,\frac{\partial r}{\partial x_i} \,\in \, R \qquad
\hbox{for} \,\,\,\, i = 1,2,\ldots,n .
\end{equation}
By clearing denominators in such a  first-order operator, we obtain a non-zero operator
$P_i\in \mathbb{C}  [x]\langle \partial_i \rangle$ with $m_i=1$ that annihilates $r$.
The operators $P_i$, together with fixing the value $r(x_0)$ at a general
point $x_0 \in \mathbb{C} ^n$, constitute a canonical holonomic representation.
If $r(x)$ is rational, then the function $g(x) = {\rm exp}(r(x))$
is also holonomic. The role of \eqref{als:eqratop1} is now played~by
\begin{equation}
\partial_i \,-\,\frac{\partial r}{\partial x_i} \,\in \, R \qquad
\hbox{for} \,\, i = 1,2,\ldots,n .
\end{equation}
By the Chain Rule, these first-order operators annihilate  $\,g(x) = {\rm exp}(r(x))$.

Holonomic functions in one variable are solutions to ordinary linear differential
equations with rational function coefficients.
Examples include algebraic functions, some elementary trigonometric functions, 
hypergeometric functions, Bessel functions, periods (as in Definition \ref{als:defperiod}),
and many more.
But, not every nice function is holonomic. A necessary condition for
a meromorphic function to be holonomic is that it has only finitely 
many poles in the complex plane. The reason is that the
singular locus of a holonomic $D$-ideal is an algebraic variety in~$\mathbb{C} ^n$.
Thus, for $n=1$ the singular locus must be a finite subset of~$\mathbb{C} $.

For a concrete example, the meromorphic function $f(x) = \frac{1}{\sin (x)}$ is not holonomic.
This shows that the class of holonomic functions is not closed under division, since
$\,\sin(x) \in {\rm Sol}(\langle \partial^2 + 1 \rangle)$.
It is also  not closed under composition of functions, since 
both $\frac{1}{x}$ and $\sin(x)$ are holonomic. 
But, as a partial rescue, following~\cite[Theorem 2.7]{als:Stanley}, 
we record the following positive result.

\begin{alsproposition}
	Let $f(x)$ be holonomic and $g(x)$ algebraic. Then their composition $f(g(x))$ is
	a holonomic function.
\end{alsproposition}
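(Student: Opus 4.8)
The plan is to produce a single nonzero operator in $\mathbb{C}[x]\langle\partial\rangle$ that annihilates $h(x) := f(g(x))$ and then to invoke Proposition~\ref{als:holon}(c) with $n=1$. First I would dispose of the trivial case in which $g$ is constant (then $h$ is constant, hence holonomic), so I may assume $g$ non-constant. Since $f$ is holonomic in one variable it satisfies a nonzero ODE; after collecting terms this reads $c_m(t) f^{(m)}(t) + \cdots + c_0(t) f(t) = 0$ with $c_0,\ldots,c_m \in \mathbb{C}[t]$ and $c_m \ne 0$. The crucial input about $g$ is that the field $K := \mathbb{C}(x)(g)$ is a \emph{finite} extension of $\mathbb{C}(x)$, say of degree $d$, and is \emph{stable under} $\tfrac{d}{dx}$: differentiating the minimal polynomial of $g$ over $\mathbb{C}(x)$ shows $g' \in K$, hence all higher derivatives of $g$ lie in $K$. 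One also checks that $c_m(g) \ne 0$ in $K$ — otherwise the minimal polynomial of $g$ over $\mathbb{C}(x)$ would divide $c_m \in \mathbb{C}[t]$, forcing $g$ to be constant — so $c_m(g)$ is invertible in $K$, and substituting $t = g(x)$ into the ODE expresses $f^{(m)}(g(x))$ as a $K$-linear combination of $f^{(i)}(g(x))$ for $0 \le i \le m-1$.

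Next I would introduce the $\mathbb{C}(x)$-vector space $W$ spanned by the functions $\gamma\cdot f^{(i)}(g(x))$, where $\gamma$ ranges over a $\mathbb{C}(x)$-basis of $K$ and $0 \le i \le m-1$; then $\dim_{\mathbb{C}(x)} W \le md < \infty$. The heart of the argument is to show $h^{(k)} \in W$ for all $k \ge 0$ by induction on $k$. The base case is $h = 1\cdot f(g(x)) \in W$. For the inductive step, writing $h^{(k)} = \sum_{i=0}^{m-1} a_i(x)\, f^{(i)}(g(x))$ with $a_i \in K$ and differentiating by the chain rule gives
\[ h^{(k+1)} \;=\; \sum_{i=0}^{m-1} a_i'\, f^{(i)}(g(x)) \;+\; \sum_{i=0}^{m-1} a_i\, g'\, f^{(i+1)}(g(x)) . \]
Since $g' \in K$ and $K$ is stable under $\tfrac{d}{dx}$, all the coefficients land in $K$, and the only term involving $f^{(m)}(g(x))$ is rewritten in terms of $f^{(0)}(g(x)),\ldots,f^{(m-1)}(g(x))$ using the identity obtained above. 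Hence $h^{(k+1)} \in W$, completing the induction.

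Finally, finiteness of $\dim_{\mathbb{C}(x)} W$ forces $h, h', \ldots, h^{(md)}$ to be $\mathbb{C}(x)$-linearly dependent, so $\sum_j r_j(x)\, h^{(j)} = 0$ for some $r_j \in \mathbb{C}(x)$ not all zero; clearing denominators yields a nonzero operator $P \in \mathbb{C}[x]\langle\partial\rangle$ with $P \bullet h = 0$. Since $h$ lies in a torsion-free $D$-module (a space of analytic functions on a suitable simply connected domain), Proposition~\ref{als:holon}(c) then gives that $f(g(x))$ is holonomic. I expect the step requiring the most care to be the inductive claim $h^{(k)} \in W$: one must keep track that $K = \mathbb{C}(x)(g)$ is finite over $\mathbb{C}(x)$ and closed under $\tfrac{d}{dx}$, and that $c_m(g)$ is invertible in $K$ — properties that genuinely use that $g$ is \emph{algebraic} rather than merely holonomic, since $f(g)$ need not be holonomic when $g$ is only holonomic (for instance $1/(1-e^x)$ has infinitely many poles).
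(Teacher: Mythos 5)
Your argument is correct and follows essentially the same route as the paper's: express the derivatives of $h=f\circ g$ in terms of $f^{(i)}(g)$ with coefficients in the field $\mathbb{C}(x,g)$, use the ODE for $f$ to cap the number of $f^{(i)}(g)$'s needed, and use finiteness of $[\mathbb{C}(x,g):\mathbb{C}(x)]$ to conclude a finite-dimensional $\mathbb{C}(x)$-span, hence Proposition~\ref{als:holon}(c). You make explicit a point the paper's proof glosses over — that the leading coefficient $c_m(g)$ is invertible in $K$ (since $c_m\in\mathbb{C}[t]$ vanishing on $g$ would force $g$ constant) — which is the step that actually licenses bounding the span by $f(g),\ldots,f^{(m-1)}(g)$; this is a genuine improvement in rigor, not a different method.
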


\begin{proof}
	Let $h\coloneqq f\circ g$. By the Chain Rule, all derivatives $h^{(i)}$ can 
	be expressed as linear combinations of $f(g),f'(g),f''(g),\ldots$ with coefficients 
	in $\mathbb{C}  [g,g',g'',\ldots]$. Since $g$ is algebraic, it fulfills some polynomial 
	equation $G(g,x)=0$. By taking derivatives of this equation,
	 we can express
	each $g^{(i)}$ as a rational function of $x$ and $g$. We conclude that  the ring
	$\mathbb{C}  [g,g',\ldots] $ is contained in the field $ \mathbb{C} (x,g)$. Denote by $W$ the vector space 
	spanned by $f(g),f'(g),\ldots$ over $\mathbb{C} (x,g)$ and by~$V$ the vector 
	space spanned by $f,f',\ldots$ over $\mathbb{C} (x)$. Since $f$ is holonomic, 
	$V$ is finite-dimensional over $\mathbb{C} (x)$. 
	This implies that $W$ is finite-dimensional over $\mathbb{C} (x,g)$. Since $g$ is algebraic, 
	$\mathbb{C} (x,g)$ is finite-dimensional over $\mathbb{C} (x)$. It follows that $W$ is a 
	finite-dimensional vector space over $\mathbb{C} (x)$, hence
	$h=f\circ g$ is holonomic.
\end{proof}

The term ``holonomic function'' was first proposed by D.~Zeilberger \cite{als:Zei}
in the context of proving combinatorial identities. Building on Zeilberger's work, 
among others, C.~Koutschan~\cite{als:CK} developed practical 
algorithms for manipulating holonomic functions.  These are implemented in his
{\tt {\tt Mathematica}} package {\tt HolonomicFunctions}, as seen below.

\begin{ex}\label{als:exmakeunique}
	Every algebraic function $f(x)$ is holonomic. Consider the function
	$y= f(x) $ that is defined by $y^4+x^4+\frac{xy}{100}	-1 \, = \,0$.
	Its annihilator in $D$ can be computed in {\tt Mathematica} as follows:
	\begin{verbatim}
	<< RISC`HolonomicFunctions`
	q = y^4 + x^4 + x*y/100 - 1
	ann = Annihilator[Root[q, y, 1], Der[x]]
	\end{verbatim}
	This {\tt Mathematica} code determines an operator $P$
	of lowest order in $\text{Ann} _D(f)$:
	\[ \begin{small} \begin{matrix}
	P \, \,=\, \,(2 x^4{+}1)^2 (25600000000 x^{12}{-}76800000000 x^8{+}76799999973 x^4{-}25600000000) \,\partial^3 \quad \\
	+ 6 x^3 (2 x^4{+}1) (51200000000 x^{12}{+}76800000000 x^8{-}307199999946x^4{+}
	179199999973) \,\partial^2 \\
	+ \,\,3 x^2 (102400000000 x^{16}{+}204800000000 x^{12}{+} 
	2892799999572 x^8 
	{-} 3507199999444x^4 \\ + 307199999953) \,\partial 
	\,\,-\,\,3x(102400000000x^{16} +204800000000 x^{12} \quad \\  \  +1459199999796 x^8
	-1049599999828x^4+51199999993).
	\end{matrix} \end{small} \]
	This operator encodes the algebraic function
	$y=f(x)$ as a holonomic function.
\end{ex}

In computer algebra, one represents a real algebraic
number as a root of a polynomial with coefficients in $\mathbb{Q}$.
However, this {\em minimal polynomial} does not specify the number uniquely. 
For that, one also needs an isolating interval or sign conditions on derivatives.
The situation is analogous when we encode a holonomic function $f$ in 
$n$ variables. 
We specify $f$ by a holonomic system of linear PDEs
together with a list of initial conditions.
The canonical holonomic representation is one example.
Initial conditions such as \eqref{als:eqinitialcond}
are designed to determine
the function uniquely inside the linear space ${\rm Sol}(I)$,
where $I \subseteq {\rm Ann}_D(f)$. For instance,
in Example~\ref{als:exmakeunique}, we would need three initial conditions to
specify the function $f(x)$ uniquely inside the $3$-dimensional solution
space to our operator~$P$. We could fix the values at three
distinct points, or we could
fix the value and the first two derivatives at one special point.

To be more precise, we generalize the canonical representation
\eqref{als:eqinitialcond} as follows.
A {\em holonomic representation} of a function $f$
is a holonomic $D$-ideal $I\subseteq \text{Ann} _D\left(f\right)$
together with a list of linear conditions that specify $p$ uniquely
inside the finite-dimensional solution space of holomorphic solutions.
The existence of this representation makes $f$ a {\em holonomic function}.
Before discussing more of the basic theory of holonomic functions,
notably their remarkable closure properties,
we first present an example that justifies the title of this~lecture.

\begin{ex}[The area of a TV screen]\label{als:areaTV}
	Let 
	\begin{align}\label{als:qtv}
	q(x,y) \,\,=\,\, x^4+y^4+\frac{1}{100}xy -1.
	\end{align} 
	We are interested in the semi-algebraic set $S = \{ (x,y)\in \mathbb{R} ^2 \mid q(x,y) \leq 0 \}$. 
	This convex set is a slight modification of a set known in the optimization literature as 
	``the TV screen''. Our aim is to compute the area of the semi-algebraic convex 
	set $S$ as accurately as is possible.
	
	One can get a rough idea of the area of $S$ by sampling. This is illustrated in
	Figure~\ref{als:figTVscreen}. From the equation we find that  $ S$ is contained in 
	the square defined by $-1.2 \leq x,y \leq 1.2$.
	We sampled $10000$ points uniformly  from that square, and for each sample
	we checked the sign of $q$. Points inside $S$ are drawn in blue
	and points outside $S$ are drawn in pink. By multiplying the area 
	$(2.4)^2 = 5.76$ of the square with the fraction of the number of blue points 
	among the samples, we learn that
	the area of the TV screen is approximately $3.7077$.
	
	\begin{figure}[h]
		\begin{center}
			\includegraphics[width=6cm]{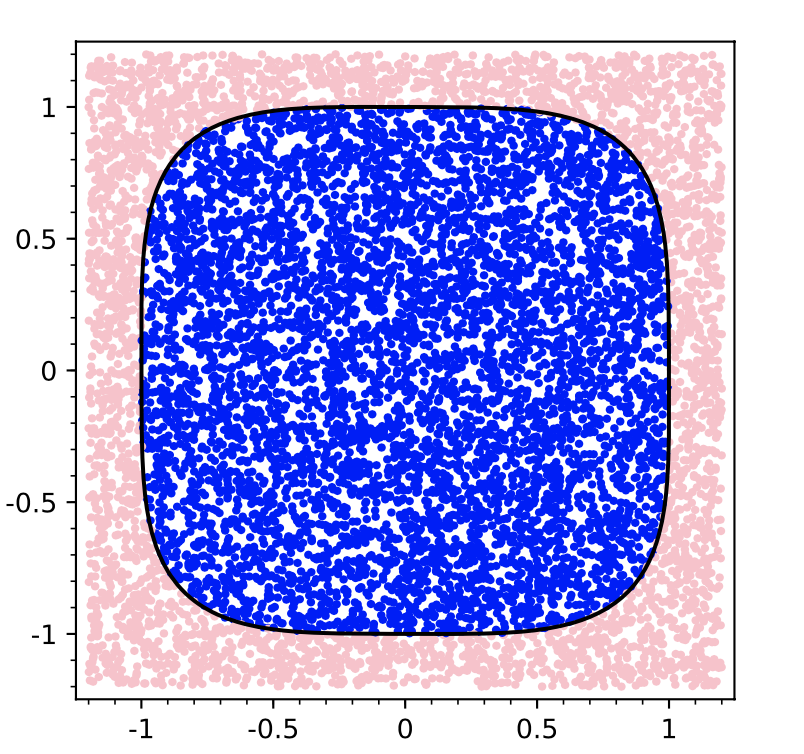} \vspace{-0.15in}
		\end{center}
		\caption{The TV screen is the convex region consisting of the blue points.
			\label{als:figTVscreen}}
	\end{figure}
	
	We now compute the area more accurately using $D$-modules. Let
	$\text{pr}\colon S \to \mathbb{R}  $ be the projection on the $x$-coordinate, 
	and write $v(x)= \ell \left( {\rm pr}^{-1}(x)\cap S \right)$
	for the length of a fiber.
	This function is holonomic and it satisfies the 
	third-order differential
	operator in Example~\ref{als:exmakeunique}.
	
	The map $\text{pr}$ has two branch points $x_0 < x_1$.
	They are the real roots of the~resultant ${\rm Res}_y( q, \partial{q}/ \partial{y}) $,
	which equals
	\begin{equation}
	\label{als:eqbranchpoints}
	25600000000 x^{12}-76800000000 x^8+76799999973x^4-25600000000 .
	\end{equation}
	These values can be written in radicals, but we take an accurate floating point
	representation. The branch point $x_1= -x_0$ is equal to
	\[  
	1.000254465850258845478545766643566750080196276158976351763236
	\ldots \]
	The desired area equals ${\rm vol}(S) = w(x_1)$, where $w$ is the holonomic function
	\[ w(x) \,\, = \,\, \int_{x_0}^x v(t) dt . \]
	One operator that annihilates $w$ is $ P \partial$,
	where $P \in \text{Ann} _D(v)$ is the third-order operator above.
	To get a  holonomic representation of $w$, we also need some initial conditions.
	Clearly, $w(x_0)=0$. Further initial conditions on $w'$ are derived by evaluating $v$ at other points. 
	By plugging values for $x$ into~\eqref{als:qtv} 
	and solving for $y$, we find $w'(0)=2$ and $w'(\pm 1) = 1/\sqrt[3]{100}$. 
	Thus, we now have four linear constraints on our function $w$, 
	albeit at different points.
	
	Our goal is to determine a unique function $w \in {\rm Sol}(P\partial)$
	by incorporating these four initial conditions, and then to
	evaluate  $w$ at $x_1$. To this end,  we proceed as follows.
	Let $x_{\text{ord}} \in \mathbb{R} $ be any point at which $P \partial$ is not singular.
	Using the command {\tt local\_basis\_expansion} 
	that is built into the 
	{\tt Sage} package {\tt ore\_algebra} \cite{als:JJK}, we compute a basis
	of local series solutions to $P\partial$ at the point $x_{\text{ord}}$.
	Since that point is non-singular, there exists a basis of the following form:
	\begin{equation}
	\label{als:eqlocalbasis}
	\begin{matrix}
	s_{x_\text{ord},0} (x)& = & 1 & + & O((x-x_{\text{ord}})^4), \\
	s_{x_\text{ord},1} (x)& = &  \! (x-x_{\text{ord}})\, & + & O((x-x_{\text{ord}})^4),\\ 
	s_{x_\text{ord},2} (x)& = & (x-x_{\text{ord}})^2 & + & O((x-x_{\text{ord}})^4),\\ 
	s_{x_\text{ord},3}(x) & = &  (x-x_{\text{ord}})^3 & + & O((x-x_{\text{ord}})^4).
	\end{matrix}
	\end{equation}
	Indeed,  by applying Proposition \ref{als:propdistrac} locally at $x_{\rm ord}$, we obtain
	the initial ideal
	$\,\text{in} _{(-1,1)}(\langle P \partial \rangle) = \langle \partial^4 \rangle$.
	By Theorem~\ref{als:thmdistrac}, this
	$D$-ideal has the distraction $J = \langle \theta (\theta-1) (\theta-2)(\theta-3)\rangle$.
	Its variety equals $\,V(J) = \{0,1,2,3\}$.
	Locally at  $x_{\text{ord}}$, our solution is given by a unique choice of
	four coefficients $c_{x_{\text{ord}},i}$, namely
	\[ w(x) = c_{x_{\rm ord},0} \cdot s_{x_\text{ord},0} (x) \,+\,
	c_{x_{\rm ord},1} \cdot s_{x_\text{ord},1} (x) \,+\,
	c_{x_{\rm ord},2} \cdot s_{x_\text{ord},2} (x) \,+\,
	c_{x_{\rm ord},3} \cdot s_{x_\text{ord},3} (x) .\]
	Let us point out that at a regular singular point $x_{\text{rs}}$,  both
	complex powers of $x$ and 
	positive integer powers of $\log(x)$ can be involved in the local basis extension at $x_{\text{rs}}$.
	We saw  a series with $\log(x)$ in Example~\ref{als:exwithlog}.
	
	Any initial condition at that point determines a linear constraint on these coefficients.
	For instance, $w'(0) =2$ implies $c_{0,1}=2$, and similarly for our initial conditions at
	$-1,1$ and~$x_0$. One challenge we encounter here is that the initial conditions pertain 
	to different points.
	To address this, we calculate transition matrices that
	relate the basis \eqref{als:eqlocalbasis} of series solutions at one 
	point to the basis of series solutions at another point. 
	These are invertible $4 \times 4$ matrices. 
	We compute them in {\tt Sage} with the command {\tt op.numerical\_transition\_matrix}.
	
	With the method described above, we find the basis of series solutions at 
	$x_1$, along with a system of four linear constraints on the
	four coefficients $c_{x_1,i}$. These constraints are derived
	from the initial conditions at $0,\ \pm 1$, and $x_0$, 
	using the $4 \times 4$ transition matrices. By solving these
	linear equations, we compute the desired function value $w(x_1)$ up to any desired precision:
	\[ 
	3.708159944742162288348225561145865371243065819913934709438572
	.... \]
	In conclusion, this positive real number is the area of the TV screen $S$
	defined by the polynomial $q(x,y)$. All of the listed digits are expected to be correct.
\end{ex}
Let us now come back to properties of holonomic functions. 
Holonomic functions are very well-behaved with respect to many operations. 
They turn out to have remarkable closure properties. In the following, 
let $f,g$ be functions in $n$ variables $x_1,\ldots,x_n$. 
In order to prove that a function is holonomic, we use one of the equivalent
characterizations in Proposition~\ref{als:holon}.

\begin{alsproposition}\label{als:prodhol}
	If $f,g$ are holonomic functions, then both 
	their sum $f+g$ and their product $f\cdot g$ are holonomic functions as well.
\end{alsproposition}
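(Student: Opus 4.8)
The plan is to apply the equivalence in Proposition~\ref{als:holon}, verifying condition~(c) of that proposition for $f+g$ and for $f\cdot g$. Both functions lie in a torsion-free $D$-module: concretely, one may enlarge the ambient space if necessary to the ring $M$ of all smooth (resp. holomorphic) functions on the relevant domain, which is simultaneously a $D$-module and a commutative ring, is torsion-free over $\mathbb{C}[x]$, and contains $f$, $g$, $f+g$ and $fg$. Since the annihilator of a function does not depend on which such module it is viewed in, Proposition~\ref{als:holon} applies verbatim with $f+g$ or $fg$ in place of $f$, and it suffices, for each coordinate $i\in\{1,\dots,n\}$, to exhibit a nonzero operator in $\mathbb{C}[x]\langle\partial_i\rangle$ annihilating $f+g$, and likewise one annihilating $fg$.

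First I would recast condition~(c) as a finiteness statement about a vector space. Since $M$ is torsion-free over $\mathbb{C}[x]$, the localization $M_{\mathbb{C}(x)} := M\otimes_{\mathbb{C}[x]}\mathbb{C}(x)$ is a commutative ring containing $M$, and it carries an action of the rational Weyl algebra $R$. For $h\in M$ and fixed $i$, put $V_h^{(i)} := \sum_{k\ge 0}\mathbb{C}(x)\cdot(\partial_i^k\bullet h)\subseteq M_{\mathbb{C}(x)}$. Then condition~(c) in direction $i$ holds for $h$ if and only if $\dim_{\mathbb{C}(x)}V_h^{(i)}<\infty$: a $\mathbb{C}(x)$-linear dependence among the $\partial_i^k\bullet h$ clears denominators to an operator in $\mathbb{C}[x]\langle\partial_i\rangle$ annihilating $h$, and conversely such an operator (of order $m$) forces $\partial_i^m\bullet h$, hence every $\partial_i^k\bullet h$ with $k\ge m$, into the span of $\partial_i^0\bullet h,\dots,\partial_i^{m-1}\bullet h$. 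In particular $V_f^{(i)}$ and $V_g^{(i)}$ are finite-dimensional by hypothesis.

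The two remaining steps are short containments. For the sum, $\partial_i^k\bullet(f+g)=\partial_i^k\bullet f+\partial_i^k\bullet g\in V_f^{(i)}+V_g^{(i)}$, so $V_{f+g}^{(i)}\subseteq V_f^{(i)}+V_g^{(i)}$, a space of dimension at most $\dim V_f^{(i)}+\dim V_g^{(i)}$; hence~(c) holds for $f+g$ in every direction $i$, and $f+g$ is holonomic. For the product, Leibniz's rule gives $\partial_i^k\bullet(fg)=\sum_{j=0}^{k}\binom{k}{j}\,(\partial_i^j\bullet f)(\partial_i^{k-j}\bullet g)$, so $V_{fg}^{(i)}$ is contained in the $\mathbb{C}(x)$-span of the products $(\partial_i^a\bullet f)(\partial_i^b\bullet g)$ with $a,b\ge 0$, which is the image of the multiplication map $V_f^{(i)}\otimes_{\mathbb{C}(x)}V_g^{(i)}\to M_{\mathbb{C}(x)}$; therefore $\dim_{\mathbb{C}(x)}V_{fg}^{(i)}\le \dim V_f^{(i)}\cdot\dim V_g^{(i)}<\infty$, so~(c) holds for $fg$ and $fg$ is holonomic.

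The only delicate point — the step I expect to be the main obstacle — is the bookkeeping around $M_{\mathbb{C}(x)}$: one must check that the product of two elements of $M$ still makes sense and is compatible after inverting the nonzero polynomials, that $M$ embeds in $M_{\mathbb{C}(x)}$, and that condition~(c) is faithfully reflected by finiteness of $\dim_{\mathbb{C}(x)}V_h^{(i)}$; this is exactly where torsion-freeness of $M$ is used. Everything after that is routine. I would also remark that the argument is quantitative: it produces explicit order bounds for the annihilating operators, and in the case $n=1$ the clean estimates $\text{rank}(\text{Ann}_D(f+g))\le \text{rank}(\text{Ann}_D(f))+\text{rank}(\text{Ann}_D(g))$ and $\text{rank}(\text{Ann}_D(fg))\le \text{rank}(\text{Ann}_D(f))\cdot\text{rank}(\text{Ann}_D(g))$, which gives an alternative route through characterization~(b).
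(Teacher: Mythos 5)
Your argument is correct and follows essentially the same route as the paper's proof: both verify condition~(c) of Proposition~\ref{als:holon} by bounding the $\mathbb{C}(x)$-dimension of the span of $\{\partial_i^k\bullet h\}_k$, using linearity for the sum and Leibniz's rule for the product. Your version is slightly more careful about the module-theoretic bookkeeping (passing to $M_{\mathbb{C}(x)}$ and spelling out where torsion-freeness enters), and the closing remark about the rank bounds is a nice explicit addition, but the core idea and the bounds $n_i+m_i$ and $n_i m_i$ are identical to the paper's.
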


\begin{proof}
	For each $i \in \{1,2,\ldots,n\}$, there exist non-zero operators
	$P_i, Q_i \in \mathbb{C}  [x]\langle \partial_i \rangle$, such that
	$P_i \bullet f = Q_i \bullet g = 0$. Set $n_i = {\rm order}(P_i)$ and
	$m_i = {\rm order}(Q_i)$. The
	$\mathbb{C} (x)$-span of $\left\{\partial_i^k \bullet f \right\}_{k=0,\ldots,n_i}$ 
	is a vector space of dimension $\leq n_i$. 
	Similarly, the $\mathbb{C} (x)$-span of the set 
	$\left\{\partial_i^k \bullet g \right\}_{k=0,\ldots,m_i}$ 
	has dimension $\leq m_i$.
	
	Now consider 
	$\partial_i^k \bullet \left( f+g \right)=\partial_i^k \bullet f+\partial_i^k \bullet g$.
	The $\mathbb{C} (x)$-span of\linebreak
	$\left\{ \partial_i^k \bullet \left( f+g \right) \right\}_{k=0,\ldots,n_i+m_i}$
	has dimension $\leq n_i+m_i$. Hence, there exists a non-zero
	operator $S_i\in \mathbb{C} [x]\langle \partial_i \rangle$, such that $S_i\bullet(f+g)=0$. 
	Since this holds for all indices $i$, we conclude that the sum  $f+g$ is holonomic.
	
	A similar proof works for the product $f \cdot g$. For each $i \in \{1,2,\ldots,n\}$,
	we now consider the set 
	$\left\{ \partial_i^k \bullet \left( f\cdot g\right) \right\}_{k=0,1,\ldots,n_im_i}$.
	By applying Leibniz' rule for taking derivatives of a product, 
	we find that the	$m_in_i+1$ generators are linearly dependent over
	the field $\mathbb{C} (x)$.
	Hence, there is a non-zero
	operator $T_i\in \mathbb{C} [x]\langle \partial_i \rangle$ such that $T_i\bullet(f \cdot g)=0$. 
	We conclude that $f \cdot g$ is holonomic.
\end{proof}
\begin{remark} The proof above gives a linear algebra method for computing
	an annihilating $D$-ideal $I$ of finite holonomic rank for $f+g$ (resp. of $f\cdot g$), 
	starting from such  annihilating $D$-ideals for $f$ and $g$. More refined methods for the same task 
	can be found in \cite[Section 3]{als:Tak92}. To get an ideal that is actually holonomic, it may
	be necessary to replace $I$ by its Weyl closure $W(I)$.
\end{remark}

The following example
illustrates Proposition~\ref{als:prodhol}.

\begin{ex}[$n=1$]
Following~\cite[Section 4.1]{als:Zei}, we
	consider the functions $f(x)=\exp (x)$ and $g(x)=\exp (-x^2)$.
	Their canonical holonomic representations are
	$I_f = \langle \partial - 1\rangle$ with $f(0) = 1$ and
	$I_g = \langle \partial + 2x \rangle $ with $g(0) = 1$.
	We are interested in the function $h = f+g$. 
	We write
	\[ \begin{pmatrix} 
	h \\ \partial \bullet h \\ \partial^2 \bullet h \end{pmatrix} \,\,\, = \,\,\,
	\begin{pmatrix} 1 & 1 \\
	1 & -2x \\
	1 & 4 x^2 -2 \end{pmatrix} \cdot
	\begin{pmatrix}
	f \\ g
	\end{pmatrix}.\]
	By computing the left kernel of the $3 \times 2$ matrix on the right hand side, we find that
	$h = f+g$ is annihilated by
	\[ I_h \,\,=\,\, \langle
	(2x+1) \partial^2 + 
	(4x^2-3) \partial
	-4x^2-2x+2\rangle , \quad \hbox{with}\,\, \,h(0) = 2, \,h'(0) = 1. \]
	Similarly, for the product $\,j = f \cdot g\,$ we find that
	$$ \,j'\, =\, f'g + f g' \,\,=\,\, f\cdot g + f \cdot(- 2xg) \,\,= \,\,(1-2x) j. $$
	A canonical holonomic representation of $j$ is 
	$I_j = \langle \partial + 2x - 1 \rangle$ with~$j(0) = 1$.
\end{ex}

\begin{alsproposition}\label{als:proprestrict}
	Let $f$ be a holonomic function in $n$ variables and $m < n$.
	Then the restriction of $f$ to the coordinate subspace 
	$\{x_{m+1}=\ldots=x_n=0\}$
	is a holonomic function in the $m$ variables $\,x_1,\ldots,x_m$.
\end{alsproposition}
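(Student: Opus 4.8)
The plan is to verify condition~c) of Proposition~\ref{als:holon} for the restricted function, reducing everything to one variable at a time. Writing $x' = (x_1,\ldots,x_{n-1})$, I would first observe that it suffices to treat the case $m = n-1$, since the general statement then follows by restricting $x_n, x_{n-1}, \ldots, x_{m+1}$ one after another. So set $\bar f(x') := f(x_1,\ldots,x_{n-1},0)$, a smooth (resp.\ holomorphic) function on the open slice $\{x_n = 0\} \cap U$ of the domain $U$ of $f$ (assumed to meet this hyperplane). As such, $\bar f$ lies in a torsion-free $D_{n-1}$-module, so Proposition~\ref{als:holon} will be available again once the required operators have been produced.

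Next, fix $i \in \{1,\ldots,n-1\}$ and take, via Proposition~\ref{als:holon} c), a nonzero operator $P_i = \sum_j a_{ij}(x_1,\ldots,x_n)\,\partial_i^j \in \mathbb{C}[x_1,\ldots,x_n]\langle\partial_i\rangle$ with $P_i \bullet f = 0$. I expect the one genuine obstacle to be that $P_i$ may restrict to zero on $\{x_n = 0\}$, i.e.\ every coefficient $a_{ij}$ could be divisible by $x_n$. The remedy: if this occurs, write $P_i = x_n P_i'$ with $P_i' = \sum_j (a_{ij}/x_n)\,\partial_i^j \neq 0$; then $x_n \cdot (P_i' \bullet f) = P_i \bullet f = 0$, and torsion-freeness of the ambient $D_n$-module over $\mathbb{C}[x_1,\ldots,x_n]$ forces $P_i' \bullet f = 0$. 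The $x_n$-degrees of the nonzero coefficients strictly decrease under such a division, so after finitely many steps one arrives at an operator $P_i$ that is still nonzero, still lies in $\text{Ann}_D(f)$, still involves only $\partial_i$, and not all of whose coefficients are divisible by $x_n$.

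Finally, I would restrict to $x_n = 0$. Since $\partial_i$ commutes with the substitution $x_n \mapsto 0$ for $i \le n-1$, and this substitution is a ring homomorphism on coefficient polynomials, applying it to $P_i \bullet f = 0$ yields $\bar P_i \bullet \bar f = 0$, where $\bar P_i := \sum_j a_{ij}(x',0)\,\partial_i^j \in \mathbb{C}[x_1,\ldots,x_{n-1}]\langle\partial_i\rangle$; moreover $\bar P_i \neq 0$, because some $a_{ij}$ is not divisible by $x_n$ and hence has nonzero restriction $a_{ij}(x',0)$. Having produced, for every $i \in \{1,\ldots,n-1\}$, a nonzero operator in $\mathbb{C}[x_1,\ldots,x_{n-1}]\langle\partial_i\rangle$ annihilating $\bar f$, Proposition~\ref{als:holon} applied over $D_{n-1}$ shows that $\bar f$ is holonomic. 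Iterating this single-variable restriction over $x_{n-1},\ldots,x_{m+1}$ finishes the argument.
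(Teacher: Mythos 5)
Your proof is correct but takes a genuinely different route from the paper. The paper appeals to the machinery of \emph{restriction ideals}: it forms
$\bigl(\text{Ann}_{D_n}(f)+x_{m+1}D_n+\cdots+x_nD_n\bigr)\cap D_m$
and cites \cite[Proposition~5.2.4]{als:SST}, which asserts in one stroke that this $D_m$-ideal is holonomic and annihilates the restricted function. This is the cleaner, more structural argument, and it is the Fourier-dual counterpart of the integration ideal used later in Proposition~\ref{als:inthol}, so the paper also gains a conceptual link between the two constructions. You instead work entirely with the elementary criterion c) of Proposition~\ref{als:holon}: reduce to a single hyperplane, take the univariate annihilators $P_i\in\mathbb{C}[x]\langle\partial_i\rangle$, strip out powers of $x_n$ using torsion-freeness (correctly exploiting that $x_n$ commutes with $\partial_i$ for $i\neq n$ so that the $x_n$-degree drops and the process terminates), and then observe that setting $x_n=0$ commutes with $\partial_i$ for $i<n$, giving a nonzero $\bar P_i$ annihilating $\bar f$. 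Your argument buys self-containedness — it needs no external citation beyond the equivalence in Proposition~\ref{als:holon} — at the cost of not explicitly exhibiting a holonomic $D_m$-ideal; you only produce operators certifying finite rank, and then holonomicity comes back through the same equivalence. Both proofs are sound; one small point worth making explicit in yours is that the slice $\{x_n=0\}$ must actually meet the domain of $f$, which is an implicit hypothesis for the statement to be meaningful.
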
 

\begin{proof} For $i \in \{m+1,\ldots,n\}$,  we consider the right ideal $x_iD_n$ 
	in the Weyl algebra $D_n$. This ideal is a left module over 
	$D_m= \mathbb{C}  \left[ x_1,\ldots,x_m\right] \langle \partial_1,\ldots,\partial_m\rangle$.
	The sum of these ideals with ${\rm Ann}_{D_n}(f)$ is hence a left $D_m$-module.
	Its intersection with $D_m$ is called the {\em restriction ideal}:
	\begin{equation}\label{als:eqrestrictionideal}
	(\text{Ann} _{D_n}(f)+x_{m+1}D_n+\cdots+x_nD_n ) \,\,\cap \,\,D_m .
	\end{equation}
	By \cite[Proposition~5.2.4]{als:SST}, this $D_m$-ideal is holonomic and it
	annihilates the restricted function $f(x_1,\ldots,x_m,0,\ldots,0)$.
\end{proof}

\begin{alsproposition}\label{als:proppartial}
	The partial derivatives of a holonomic function are holonomic functions.
\end{alsproposition}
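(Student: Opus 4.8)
The plan is to establish holonomicity of each partial derivative $\partial_j f$ by appealing to Proposition~\ref{als:holon}. Write $f\in M$ for a torsion-free $D$-module containing our holonomic function, and fix an index $j\in\{1,\ldots,n\}$. I aim to verify condition c) of Proposition~\ref{als:holon} for $\partial_j f$: for every $i\in\{1,\ldots,n\}$ there is a nonzero operator $P_i\in\mathbb{C}[x_1,\ldots,x_n]\langle\partial_i\rangle$ that annihilates $\partial_j f$. The guiding observation is that $\partial_j f$ lies inside the finite-dimensional $\mathbb{C}(x)$-vector space spanned by all derivatives of $f$, and that this space is preserved under differentiation.

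First I would set up this space. Since $f$ is holonomic, Proposition~\ref{als:holon}~b) gives $\text{rank}(\text{Ann}_D(f))=\dim_{\mathbb{C}(x)}\bigl(R/R\,\text{Ann}_D(f)\bigr)<\infty$. Using that $M$ is $\mathbb{C}[x]$-torsion-free, so that $M$ embeds into $\mathbb{C}(x)\otimes_{\mathbb{C}[x]}M$ on which $R$ acts, the assignment $r+R\,\text{Ann}_D(f)\mapsto r\bullet f$ defines a surjection of $\mathbb{C}(x)$-vector spaces onto $V\coloneqq R\bullet f$; it is well defined because $P\bullet f=0$ for $P\in\text{Ann}_D(f)$. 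As $R=\bigoplus_{\alpha}\mathbb{C}(x)\,\partial^\alpha$ as a left $\mathbb{C}(x)$-module, $V$ is precisely the $\mathbb{C}(x)$-linear span of $\{\,\partial^\alpha\bullet f\,\}_{\alpha}$, and the rank bound makes $N\coloneqq\dim_{\mathbb{C}(x)}V$ finite. Moreover $V$ is stable under each $\partial_i$, since $\partial_i$ sends a spanning vector $\partial^\alpha\bullet f$ to $\partial_i\partial^\alpha\bullet f$, which is again one of the spanning vectors.

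Next I would extract the operators. Fix $i$. As $\partial_j\bullet f\in V$ and $V$ is $\partial_i$-stable, the $N+1$ elements $\partial_i^k\bullet(\partial_j f)$ for $k=0,1,\ldots,N$ all lie in the $N$-dimensional space $V$, hence satisfy a nontrivial relation $\sum_{k=0}^{N}a_k(x)\,\partial_i^k\bullet(\partial_j f)=0$ with $a_k\in\mathbb{C}(x)$. Clearing denominators yields a nonzero operator $P_i\in\mathbb{C}[x]\langle\partial_i\rangle$ with $P_i\bullet(\partial_j f)=0$. Carrying this out for all $i=1,\ldots,n$ and then invoking the implication c)$\Rightarrow$a) of Proposition~\ref{als:holon} — applicable because $\partial_j f$ again lies in the torsion-free $D$-module $M$ — shows that $\partial_j f$ is holonomic.

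I do not anticipate a genuine obstacle; the argument runs parallel to the proof of Proposition~\ref{als:prodhol}. The only points needing a little care are making sense of the $\mathbb{C}(x)$-action, i.e.\ of dividing by polynomials, which is exactly what the torsion-freeness hypothesis provides, and checking that $V$ is preserved by each $\partial_i$. The same proof can be packaged more homologically: $D\bullet(\partial_j f)$ is a $D$-submodule of $D\bullet f\cong D/\text{Ann}_D(f)$, and applying the flat localization $R\otimes_D(-)$ embeds $R/R\,\text{Ann}_D(\partial_j f)$ into $R/R\,\text{Ann}_D(f)$, so that $\text{rank}(\text{Ann}_D(\partial_j f))\leq\text{rank}(\text{Ann}_D(f))<\infty$, and then Proposition~\ref{als:holon}~b)$\Rightarrow$a) finishes the job.
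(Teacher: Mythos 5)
Your proof is correct, but it follows a different route from the paper's. The paper factors each of the guaranteed annihilators as $P_i=\widetilde{P_i}\partial_i+a_i(x)$ with $a_i\in\mathbb{C}[x]$, reads off $\widetilde{P_i}\bullet\partial_if=-a_if$, and then reduces to Proposition~\ref{als:prodhol}: since $a_if$ is a product of two holonomic functions, it admits a nonzero annihilator $Q_i\in\mathbb{C}[x]\langle\partial_i\rangle$, whence $Q_i\widetilde{P_i}\in\mathbb{C}[x]\langle\partial_i\rangle$ kills $\partial_if$. You instead argue directly in the finite-dimensional $\mathbb{C}(x)$-span $V$ of all derivatives $\partial^\alpha\bullet f$: you note that $V$ is stable under every $\partial_i$, that $\partial_jf\in V$, and hence that the $\dim V+1$ elements $\partial_i^k\bullet(\partial_jf)$ must be $\mathbb{C}(x)$-linearly dependent; clearing denominators gives the required operator for every pair $(i,j)$. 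Your argument is a straightforward reuse of the dimension-count technique from the paper's proof of Proposition~\ref{als:prodhol}, while the paper's proof is a slicker reduction that avoids setting up $V$ again. One concrete advantage of your version: it constructs, uniformly, the annihilating operator in $\mathbb{C}[x]\langle\partial_k\rangle$ for $\partial_jf$ for every index $k$, including $k\neq j$; the paper's displayed argument only produces the operator for the diagonal index $k=i=j$, leaving the off-diagonal operators (which condition~c) of Proposition~\ref{als:holon} also requires) to the reader. Your homological repackaging at the end — flatness of $R$ over $D$ gives an embedding $R/R\,\text{Ann}_D(\partial_jf)\hookrightarrow R/R\,\text{Ann}_D(f)$ and hence a rank bound — is also valid and is a clean way to phrase the same idea.
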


\begin{proof}
	Let $f$ be holonomic and $P_i\in \mathbb{C} [x]\langle \partial_i\rangle\backslash \{0\}$ 
	with $P_i\bullet f=0$ for all $i$.  We can write
	$P_i$ as $P_i=\widetilde{P_i}\partial_i+a_i(x)$, where $a_i\in\mathbb{C} [x]$. 
	If $a_i=0$, then $\widetilde{P_i}\bullet \frac{\partial f}{\partial x_i} = 0$ 
	and we are done.
	Assume $a_i\neq 0$. Since both $a_i$ and $f$ are holonomic, 
	by Proposition~\ref{als:prodhol}, there is a non-zero linear operator 
	$Q_i\in \mathbb{C} [x]\langle \partial_i \rangle$ such that 
	$Q_i \bullet (a_i\cdot f)=0$.   
	Then $Q_i\widetilde{P_i}$ annihilates $ \partial f / \partial x_i$.
\end{proof}

The Fundamental Theorem of Calculus ensures that
indefinite integrals of holonomic functions are holonomic functions.
In order to prove a similar statement for the case of definite integrals, 
one has to work a little harder.

\begin{alsproposition}\label{als:inthol}
	For a holonomic function $f\colon \mathbb{R}  ^n \to \mathbb{C} $,
		the definite integral 
	\[ F(x_1,\ldots,x_{n-1}) \,\,=\,\,\int_a^b f(x_1,\ldots,x_{n-1},x_n)dx_n\]
	is a holonomic function in $n-1$ variables, assuming the integral exists.
\end{alsproposition}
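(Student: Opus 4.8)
The plan is to use the ``integration module'' construction, which is the definite-integral analogue of the restriction ideal in Proposition~\ref{als:proprestrict}. Write $D_n = \mathbb{C}[x_1,\ldots,x_n]\langle\partial_1,\ldots,\partial_n\rangle$ and $D_{n-1}=\mathbb{C}[x_1,\ldots,x_{n-1}]\langle\partial_1,\ldots,\partial_{n-1}\rangle$. The key algebraic object is the left ideal $\partial_n D_n$ together with $\text{Ann}_{D_n}(f)$; the {\em integration ideal} is the intersection
\[ \bigl( \text{Ann}_{D_n}(f) + \partial_n D_n \bigr) \,\cap\, D_{n-1}. \]
By the theory of $D$-module integration (see \cite[Chapter~5]{als:SST}), this $D_{n-1}$-ideal is holonomic whenever $\text{Ann}_{D_n}(f)$ is holonomic. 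The point is that an operator $P \in D_{n-1}$ lies in this intersection precisely when $P \bullet f = \partial_n \bullet g$ for some $g$; integrating such an identity from $a$ to $b$ in the variable $x_n$ kills the right-hand side up to boundary terms $g(\ldots,b) - g(\ldots,a)$.

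The steps I would carry out are as follows. First, invoke the holonomicity of the integration ideal $J := (\text{Ann}_{D_n}(f)+\partial_n D_n)\cap D_{n-1}$, citing the relevant result from \cite{als:SST}; in particular $\text{rank}(J) < \infty$, so by Proposition~\ref{als:holon} it suffices to show $J$ annihilates $F$ (or a suitable ideal containing $J$ does, after which we invoke characterization~c) of Proposition~\ref{als:holon} via the one-variable operators $P_i$ obtained from a Gröbner basis of $J$). Second, take any $P \in J$, choose $g \in D_n$ with $P - \partial_n g \in \text{Ann}_{D_n}(f)$, and compute
\[ P \bullet F \;=\; \int_a^b (P\bullet f)\, dx_n \;=\; \int_a^b \partial_n\bullet(g\bullet f)\, dx_n \;=\; \bigl[(g\bullet f)(x_1,\ldots,x_{n-1},x_n)\bigr]_{x_n=a}^{x_n=b}. \]
Third, handle the boundary terms: under the hypothesis that the integral exists and $f$ is sufficiently decaying (or $a,b$ are chosen so the evaluations vanish, as happens in the applications in this lecture where $a,b$ are branch points at which the fiber length $v$ vanishes), this bracket is zero, so $P \bullet F = 0$. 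Hence $J \subseteq \text{Ann}_{D_{n-1}}(F)$, and since $\text{rank}(J) < \infty$, part~c) of Proposition~\ref{als:holon} gives nonzero operators $P_i \in \mathbb{C}[x_1,\ldots,x_{n-1}]\langle\partial_i\rangle$ annihilating $F$ for each $i \in \{1,\ldots,n-1\}$; therefore $F$ is holonomic.

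The main obstacle is the boundary-term analysis in the third step: passing from ``$J$ annihilates $F$ modulo boundary terms'' to ``$J$ annihilates $F$'' genuinely requires the function-theoretic input that the evaluations $(g\bullet f)\mid_{x_n=a}$ and $(g\bullet f)\mid_{x_n=b}$ vanish (or at least that one can absorb them), which is why the statement carries the caveat ``assuming the integral exists'' and why in practice one restricts to rapidly decaying $f$ or to integration over a full period/natural boundary. A cleaner alternative that sidesteps this is to cite directly the integration theorem for holonomic $D$-modules from \cite[Section~5.5]{als:SST}, which packages both the holonomicity of the integration ideal and its annihilation of the definite integral into one statement; I would present the linear-algebra/boundary-term argument above as the conceptual explanation and defer the technical hypotheses to that reference.
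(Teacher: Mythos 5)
Your setup—forming the integration ideal $J = (\mathrm{Ann}_{D_n}(f)+\partial_n D_n)\cap D_{n-1}$, noting it is holonomic, and picking $P\in J$ with $P - \partial_n g \in \mathrm{Ann}_{D_n}(f)$ so that $P\bullet F = \bigl[(g\bullet f)\bigr]_{x_n=a}^{x_n=b}$—matches the paper's. But you leave the crux unresolved: you conclude $P\bullet F = 0$ only after imposing an extra hypothesis (decay of $f$, or vanishing at the limits), which is not part of the statement, and otherwise you punt to the reference. The paper closes this gap with a bootstrapping step that you are missing. Namely, when the boundary term $h(x_1,\ldots,x_{n-1}) := \bigl[(g\bullet f)\bigr]_{x_n=a}^{x_n=b}$ does not vanish, it is nonetheless a holonomic function of the remaining variables, by Proposition~\ref{als:proprestrict} (restriction to $x_n=a$ and $x_n=b$) together with Proposition~\ref{als:proppartial} (closure under derivatives, to handle $g\bullet f$) and Proposition~\ref{als:prodhol} (closure under sums). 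Hence there is a nonzero $\widetilde{Q}$ in one variable annihilating $h$, and since $\widetilde{Q}$ commutes with evaluation at $x_n=a,b$, applying the same argument to $\widetilde{Q}P$ yields $\widetilde{Q}Q\bullet F = 0$ with $\widetilde{Q}Q \neq 0$. Doing this for each variable and invoking Proposition~\ref{als:holon}~c) finishes the proof without any decay assumption on $f$.

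Two smaller remarks. First, you do not actually need the strong claim $J\subseteq \mathrm{Ann}_{D_{n-1}}(F)$; the paper only extracts one nonzero operator per variable, which is exactly what Proposition~\ref{als:holon}~c) requires and is cheaper to obtain. Second, the paper also records an alternative route that realizes your instinct about making the boundary terms disappear: multiply the integrand by $H(x_n-a)H(b-x_n)$ with $H$ the Heaviside distribution (itself holonomic, since $\theta\bullet H=0$), so the integral becomes one over all of $\mathbb{R}$ with a compactly supported holonomic distribution and the boundary term genuinely vanishes. If you want to keep your "boundary terms vanish" simplification, that is the rigorous way to do it; otherwise, supply the bootstrapping step.
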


\begin{proof} We consider the case $n=2$, the general case being proven in the very same manner. 
	Let $D_2=\mathbb{C}[x_1,x_2]\langle \partial_1,\partial_2 \rangle$ and $I$ a holonomic $D_2$-ideal annihilating $f$.
	Since $I$ is holonomic, its {\em integration ideal} 
	$I_{\text{int}} \coloneqq (I+\partial_2D_2 ) \cap \mathbb{C}[x_1]\langle \partial_1 \rangle$
	 is a holonomic $ \mathbb{C}[x_1]\langle \partial_1 \rangle$-ideal (cf. \cite[Theorem 6.10.3]{als:Tak}). 
	 Therefore, there exists a non-zero operator $Q\in I_{\text{int}}$.
	 Moreover, $Q$ can be written as $Q=P-\partial_2R$ 
	 for some $P\in I\setminus \{0\}$ and $R\in D_2$. 
	 In particular, $P=Q(x_1,\partial_1)+\partial_2R(x_1,x_2,\partial_1,\partial_2)$ annihilates $f$.
	Applying this operator to $f$ and taking the integral $\int_a^b$ on both sides yields 
	\begin{align}\label{als:inteq}
	0 \,\,= \,\, Q(x_1,\partial_1) \bullet F(x_1) \, +\,
	\bigl[ R(x_1,x_2,\partial_1,\partial_2)\bullet f \bigr]_{x_2=a}^{x_2=b}.
	\end{align}
	If the second summand in~\eqref{als:inteq} is zero, then 
	$Q\in \mathbb{C} [x_1]\langle \partial_1 \rangle$ annihilates $F(x_1)$
	and we are done. Otherwise, that summand is a holonomic 
	function in $x_1$ by Propositions \ref{als:proprestrict} and \ref{als:proppartial}.
	Thus, there exists a non-zero operator $\widetilde{Q}\in \mathbb{C} [x_1]\langle \partial_1 \rangle$ 
	that annihilates $\left[ R(x_1,x_2,\partial_1,\partial_2)\bullet f \right]_{x_2=a}^{x_2=b}$.
 	Then, applying the same argument to $\widetilde{Q}P$ concludes the proof.
	
	Here is an alternative second proof. We can use the Heaviside function
	\[ H(x)\,\,=\,\,\begin{cases}  0 & x<0, \\ 1 & x \geq 0, \end{cases}\] 
	to rewrite the integral as follows: 
	\[F(x_1) \,\, = \,\,\int_a^b \!\! f(x_1,x_2)dx_2 \,\,=
	\,\, \int_{-\infty}^{\infty} \!\! H(x_2-a)H(b-x_2) f(x_1,x_2)dx_2.\]
	The distributional derivative of $H$ is given by the Dirac delta function $\delta$. 
	Since $x \bullet \delta =0$, the operator $\theta = x\partial$ annihilates the Heaviside function, 
	so $H$ is a holonomic distribution. Adapting the above proof to the 
	holonomic distribution $H(x_2-a)H(b-x_2) f(x_1,x_2)$, 
	the second summand in~\eqref{als:inteq} vanishes, since 
	\mbox{$H(x_2-a)H(b-x_2) f(x_1,x_2)$} is supported on $[a,b]$.    
\end{proof}

\begin{defn}\label{als:defintegrationideal}
	As in the proof of Proposition~\ref{als:inthol}, it is natural to consider 
	\[ \qquad \bigl( {\rm Ann}_{D_n}(f) \,\,+\,\,\partial_{m+1}D_n + \cdots + \partial_nD_n \bigr) 
	\,\cap \, D_m 
	\qquad \text{for }\,\, m< n. \]
	This intersection is a $D_m$-ideal.
	It is called the \emph{integration ideal} of the function~$f$ with respect to 
	the variables $x_{m+1},\ldots,x_n$.
	The expression
	is dual to the restriction ideal \eqref{als:eqrestrictionideal} under the Fourier transform.
 	The Fourier transform exchanges $x_i$ and~$\partial_i$, with a minus sign involved.
\end{defn}

Equipped with our tools for manipulating holonomic functions, 
we now embark on the computation of volumes of compact semi-algebraic sets. 
We follow the work of P.~Lairez, M.~Mezzarobba, and M.~Safey El Din~\cite{als:LMS}. 
They compute this volume by deriving the Picard--Fuchs differential equation of 
the period of a certain rational integral. Here is the key definition.

\begin{defn}\label{als:defperiod} \rm
For a rational function
	 $R(t,x_1,\ldots,x_n)$, consider the  integral
	\begin{equation}
	\label{als:eqperiod1}
	\oint R(t,x_1,\ldots,x_n)dx_1\cdots dx_n.
	\end{equation}
	We also fix an open subset $\Omega$ of either $\mathbb{R} $ or $\mathbb{C} $.
	An analytic function $\phi \colon \Omega \to \mathbb{C} $ is a 
	{\em period} of the integral \eqref{als:eqperiod1}
	if, for any $s\in \Omega$, there exists a 
	neighborhood $\Omega' \subseteq \Omega$ of $s$ and
	an $n$-cycle $\gamma \subset \mathbb{C} ^n$ with the following property.
	For all $t\in \Omega'$, the cycle $\gamma$ is disjoint from the poles of 
	$R_t \coloneqq R(t,\bullet)$ and
	\begin{equation}
	\phi (t) \,\,=\,\, \int_{\gamma} R(t,x_1,\ldots,x_n)dx_1\cdots dx_n . 
	\end{equation}   
	If this holds, then there exists an operator
	$P\in D \backslash \{0\}$ of the Fuchsian class that annihilates~$\phi(t)$. 
\end{defn}

Let $S = \left\{ f \leq 0 \right\} \subset \mathbb{R}  ^n$ be a compact basic 
semi-algebraic set, defined by a polynomial $f\in \mathbb{Q} [x_1,\ldots,x_n]$. 
Let $\text{pr}\colon \mathbb{R}  ^n \to \mathbb{R}  $ denote the projection 
onto the first coordinate.
The set of  {\em branch points} of
the hypersurface $\{f=0\}$ under the map $\text{pr}$ is the following subset 
of the real line, which is assumed to be finite:
\begin{align*} 
 \Sigma_f \,\,=\,\, \left\{ \right. p \in \mathbb{R}  \, \mid \ & \exists
\,x=(x_2,\ldots,x_n) \in \mathbb{R}  ^{n-1}: f(p,x)=0 \,\,\,
\text{and} \\ 
&\frac{\partial f}{\partial x_i}(p,x)=0\, \text{ for } i=2,\ldots,n\left.\right\}.
\end{align*}
The polynomial in the unknown $p$ that defines $\Sigma_f$ is obtained by
eliminating $x_2,\ldots,x_n$. It can be represented as a multivariate resultant, 
generalizing the Sylvester resultant in \eqref{als:eqbranchpoints}.

Fix an open interval $I$ in $\mathbb{R} $ with $I \cap \Sigma_f = \emptyset$.
For any $x_1 \in I$, the set\linebreak$S_{x_1} \coloneqq \text{pr}^{-1}(x_1)\cap S \,$
is compact and semi-algebraic in $(n-1)$-space. We are
interested in the volume of this set. By \cite[Theorem 9]{als:LMS}, 
the function  $\,v\colon I\to \mathbb{R} ,\,x_1 \mapsto \text{vol}_{n-1} \left( S_{x_1}\right)\,$ 
is a period of the rational integral
\begin{align}\label{als:volumefiber}
\frac{1}{2\pi i} \oint  \frac{x_2}{f(x_1,x_2,\ldots,x_n)}\frac{\partial f(x_1,x_2,\ldots,x_n)}{\partial x_2} dx_2 \cdots dx_n.
\end{align} 

Let $e_1 < e_2 < \cdots < e_K$ be the branch points in $\Sigma_f$
and set $e_0 = - \infty$ and $e_{K+1}=\infty$.
This specifies the pairwise disjoint
open intervals $I_k=\left(e_k,e_{k+1} \right)$. 
They satisfy $\,\mathbb{R}  \backslash \Sigma_f = \bigcup_{k=0}^{K} I_k$.
Fix the holonomic functions $\,w_k(t) = \int_{e_k}^t v(x_1)dx_1$. 
The volume of $S$  is obtained as
\[ \text{vol}_n \left( S \right) 
\,\, = \,\, \int_{e_1}^{e_K} \!\! v(x_1) d x_1 \,\,=\,\,
\sum_{k=1}^{K-1} w_k\left( e_{k+1}\right).\]

How does one evaluate such an expression numerically?
As a period of the rational integral~\eqref{als:volumefiber}, 
the volume function $v$
is a holonomic function on each interval $I_k$. A key step 
is to compute a differential operator $P \in D_1$ that annihilates $v\!\!\mid_{I_k}$ for all $k$.
With this, the product operator $P\partial$ annihilates the function
$w_k(x_1)$ for $k=1,\ldots,K-1$. By imposing sufficiently many initial conditions, 
we can reconstruct the functions $w_k$ 
from the  operator $P\partial$ uniquely. One initial condition that comes for free 
for each~$k$ is $w_k(e_k)=0$.

\smallskip

The operator $P$ is known as the {\em Picard--Fuchs equation} 
of the period in question.
The following software packages can  compute such Picard--Fuchs equations:
\begin{bulist}
	\item {\tt HolonomicFunctions}~\cite{als:CK} by C.~Koutschan in {\tt Mathematica}, 
	\vspace{-0.2cm}
	\item {\tt Ore\_algebra} by F.~Chyzak in {\tt Maple},
		\vspace{-0.2cm}
			\item {\tt ore\_algebra}~\cite{als:JJK}  by M.~Kauers in {\tt Sage},
		\vspace{-0.2cm}
	\item {\tt Periods} by P.~Lairez in {\tt Magma}, implementing the algorithm described in~\cite{als:L}.
\end{bulist}
Our readers are encouraged to experiment with these programs.

We next discuss how one can actually compute 
the volume of our semi-algebraic set $S = \{f \leq 0\}$ in practice.
Starting from the defining polynomial $f$,
we compute the Picard--Fuchs operator $P \in D_1$ 
and we find sufficiently many compatible initial conditions. 
Thereafter, for each interval $I_k$, where $k=1,\ldots,K-1$,
we perform the following steps. We describe this for
the \tt{ore\_algebra} \rm package in {\tt Sage}, which 
we found to work well:
\begin{numlist}
	\item Using the command {\tt local\_basis\_expansion}, compute 
	a local basis of series solutions for the 
	differential operator $P \partial $ at various points in $[e_k,e_{k+1})$.
	\item Using the command {\tt op.numerical\_transition\_matrix}, 
	compute a transition matrix for the series solution basis from one point to 
	another one.
	\item From the initial conditions, construct linear relations between the 
	coefficients in the local basis extensions. Using step 2, transfer them to 
	the branch point $e_{k+1}$.
	\item Plug in to the local basis extension at $e_{k+1}$ and thus
	evaluate the volume of~$S\cap \text{pr}^{-1}\left( I_k\right)$.
\end{numlist}

We illustrate this recipe by computing the volume of a 
convex body in $3$-space.

\begin{ex}[Quartic surface]
	Fix the quartic polynomial
	\begin{align}
	f(x,y,z)\,\,=\,\,x^4+y^4+z^4+\frac{x^3y}{20}-\frac{xyz}{20}-\frac{yz}{100}+\frac{z^2}{50} -1,
	\end{align}
	and let $\,S= \left\{(x,y,z)\in \mathbb{R}^3 \mid f(x,y,z) \leq 0 \right\}$. 
	Our aim is to compute $\text{vol}_3\left( S \right)$.  
	\begin{figure}[h]
		\begin{center}
			\includegraphics[width=6cm]{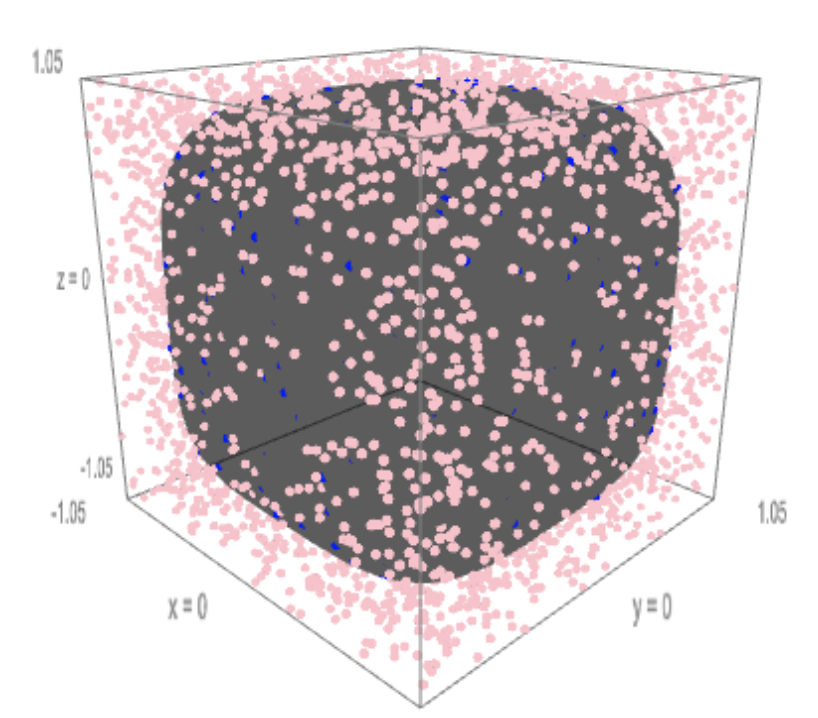} \vspace{-0.19in}
		\end{center}
		\caption{The quartic bounds the convex region consisting of the gray points.
			\label{als:figquartic}}
	\end{figure}
	
	As in Example~\ref{als:areaTV} with the TV screen, we
	can get a rough idea of the volume of~$S$ by sampling. 
	This is illustrated in Figure~\ref{als:figquartic}. Our set $S$ is compact, convex,
	and contained in the cube given by $-1.05 \leq x,y,z \leq 1.05$. 
	We sampled $10000$ points uniformly from that cube. 
	For each sample we checked the sign of $f(x,y,z)$. 
	By multiplying the cube's volume $(2.1)^3=9.261$ by the fraction~of 
	the number of gray points and the number of sampled points, 
	{\tt Sage} found within few seconds that the volume of the quartic
	body $S$ is~$\approx  6.4771$. 
	In order to obtain a higher precision, we now compute the volume 
	of $S$ with the help of $D$-modules.
	
	We use the notation
	 $\text{pr}\colon \mathbb{R}  ^3 \to \mathbb{R}  $ for the projection onto the $x$-coordinate.
	Let \linebreak$v(x)= \text{vol}_2 \left( {\rm pr}^{-1}(x)\cap S \right)$ denote the area of the 
	fiber over any point $x$ in~$\mathbb{R} $. We write $e_1<e_2$ for the two branch points of
	the map $\text{pr}$ restricted to the quartic surface $\{f=0\}$.
	They can be computed by help of resultants, for instance by the
	following steps in {\tt Singular} with the library {\tt solve.lib}:
	\begin{verbatim}
	LIB "solve.lib";
	ring r=0,(x,y,z),dp; setring r;
	poly F=x^4+y^4+z^4+x^3*y*1/20-x*y*z*1/20-y*z*1/100+z^2*1/50-1;
	def DFy=diff(F,y); def DFz=diff(F,z);
	def resy=resultant(F,DFy,y); def resz=resultant(F,DFz,z);
	ideal I=F,resy,resz;
	def A=solve(I); setring A; SOL;
	\end{verbatim}
	The final output {\tt SOL} is a list of the $36$ roots of the zero-dimensional ideal $I$. 
	The first two of them are real. They are the branch points of
	$\text{pr}$. We obtain $e_1 \approx -1.0023512$ and $e_2\approx 1.0024985$.
		By \cite[Theorem~9]{als:LMS}, the area function $v(x)$ is a period of
	the rational integral 
	\[ \frac{1}{2 \pi i}  \oint \frac{y}{f(x,y,z)}\frac{\partial f(x,y,z)}{\partial y} dy dz.\] 
	Set $w(t) =\int_{e_1}^t v(x)dx$.
	The desired $3$-dimensional volume is $\text{vol}_3(S)=w(e_2)$.
	
	Using Lairez' implementation {\tt periods} in {\tt MAGMA}, we compute 
	a differential operator $P$ of order eight that annihilates $v(x)$.
	Again, $P \partial$ then annihilates $ w(x)$. One initial condition is $w(e_1)=0$. 
	We obtain eight further initial conditions $w'(x)=\text{vol}_2(S_{x})$ 
	for points $x \in (e_1,e_2)$ by running the same algorithm 
	for the $2$-dimensional semi-algebraic slices
	$S_{x} = {\rm pr}^{-1}(x) \cap S$. In other words, we make
	eight subroutine calls to an area measurement as in Example~\ref{als:areaTV}.
	
	From these nine initial conditions we derive linear relations of the coefficients 
	in the local basis expansion at $e_2$. These computations are run in {\tt Sage} as described
	in steps 1--4 above. We find the approximate volume of our convex body $S$ to be 
	\[ \begin{matrix}
	\approx
	6.43883248057289354474073389596995618895842088923511697626632892312\\8826 
	915527388764216209149558398903829431137608893452690352556009760\\1024171
	19080476940553482655811421276613538061395975793530527102208\\9419155701
	5215864701708740021943845291406868562277595417150971133\\9913473405961
	763289220607208551633239796916338376007073876010731\\8247752061504714
	36725046090092340906637773227339039682229623521\\4963623286613117557
	93068754414836072122568105348117876005826\\4738867105810326818911
	578448323758536767168707442532146029\\753762594261578920477859.
	\end{matrix}\]
	This numerical value is guaranteed to be accurate up to $550$ digits.
\end{ex}

\section{Statistics} 

In this lecture, we explore the role of $D$-modules in algebraic statistics.
Our discussion centers around two themes. First, we study the
Bernstein--Sato ideal of the likelihood function~of a discrete statistical model.
We present a case~study that suggests a relationship between that
ideal and maximum likelihood estimation. Thereafter, we turn to the
holonomic gradient method (HGM) for continuous distributions. This
method is the result of collaborations between statisticians and 
$D$-module experts in Japan. We explain how HGM is used
for maximum likelihood estimation.
This is implemented in an {\tt R} package~\cite{als:hgmR}.

Let $D_n=\mathbb{C} [x_1,\ldots,x_n]\langle \partial_1,\ldots,\partial_n \rangle$ be
the $n$th Weyl algebra and adjoin one new formal variable
$s$ that commutes with all $x_i$ and $\partial_j$. This defines the ring $D_n[s]$.
Fix a polynomial $f\in \mathbb{C} \left[ x_1,\ldots,x_n\right]$ and consider 
the \mbox{$D_n[s]$-module} $\mathbb{C} \left[x_1,\ldots,x_n,f^s,f^{-1},s\right]$.
Here the action of $D_n[s]$ on this module
is given by the usual rules of calculus and arithmetic, in particular
\[ \partial_i \bullet f^s \,\, = \,\, s\cdot \frac{\partial f}{\partial x_i} \cdot f^{-1} \cdot f^s
\,\, = \,\,  s\cdot  \frac{\partial f}{\partial x_i} \cdot f^{s-1} .\]

The {\em Bernstein--Sato polynomial}  of $f$ is the unique 
monic univariate polynomial 
$b_f\in\mathbb{C} [s]$ of minimal degree  such that 
$P\bullet f^{s+1}=b_f\cdot f^s$ for some $P\in D[s]$.
The polynomial $b_f$ was called the
{\em global $b$-function} in \cite[Section 5.3]{als:SST}, to which we refer for
details. It is known that $b_f$ is non-zero. M.~Kashiwara~\cite{als:Kas} showed that
all its roots are negative rational numbers.  
The Bernstein--Sato polynomial is computed
as the generator of the following principal ideal:
\[ \langle b_f\rangle \,\,=\,\, \left( \text{Ann}_{D[s]}\left( f^s \right) + \langle f \rangle \right) 
\, \cap\, \mathbb{C} [s].\]
Here, $\,\text{Ann}_{D[s]}\left( f^s \right) =\left\{ P\in D[s] \mid P \bullet f^s=0 \right\} \,$ 
denotes the $s$-parametric annihilator of $f^s$. A method for computing this 
$D[s]$-ideal can be found in \cite[Algorithm 5.3.15]{als:SST}.

We now pass to the case $k \geq 2$ of several polynomials
$f_1,\ldots,f_k\in \mathbb{C} \left[ x_1,\ldots,x_n \right]$. Following \cite{als:BVWZ} and the
references therein, we define the {\em Bernstein--Sato ideal} of 
the tuple of polynomials $f = (f_1,\ldots,f_k)$~as follows:
\begin{equation}\label{als:eqBSideal} 
\mathcal{B} \left( f \right) \,\,= \,\,\left(\, \text{Ann} _{D_n[s_1,\ldots ,s_k]}
\left( f_1^{s_1}\cdots f_k^{s_k} \, \right) \, +\, \langle f_1\cdots f_k \rangle \right) 
\, \cap \,\,\mathbb{C} [s_1,\ldots,s_k].
\end{equation}
Here, $s=(s_1,\ldots,s_k)$ and the
$s$-parametric annihilator of $f^s\coloneqq f_1^{s_1}\cdots f_k^{s_k}$~is
$$\,\text{Ann}_{D_n[s_1,\ldots,s_k]}\left( f^s \right) \,\, = \,\,
\bigl\{ P\in D_n[s_1,\ldots,s_k] \mid P \bullet f^s=0 \bigr\} . $$
An implementation for performing the computation
on the right hand side in~\eqref{als:eqBSideal}
is available in the {\tt Singular} library {\tt dmod.lib}.

Note that the ideal $\mathcal{B}(f)$ 
consists of all polynomials $b\in \mathbb{C}  [s_1,\ldots,s_k]$
that satisfy 
\[ \qquad b \cdot \prod_{i=1}^k f_i^{s_i} \,\,=\,\, P\bullet\prod_{i=1}^{k} f_i^{s_i+1} 
\quad  \hbox{for some $\,P\in D_n[s_1,\ldots,s_k]$.} \]
For $k=1$, the ideal $\mathcal{B}(f)$ is generated by the 
Bernstein--Sato polynomial $b_f$. 
If $k >1$, the Bernstein--Sato ideal is generally not principal. 
By \cite[Theorem 1.5.1]{als:BVWZ}, all the irreducible components
of codimension one in the variety of
$\mathcal{B}\left(f\right)$ are hyperplanes of the special form
\begin{align}\label{als:typehyperplane}
a_1s_1+\ldots+a_k s_k +b\,=0\, \quad
\text{ where }\,\,a_1,\ldots,a_k \in \mathbb{Q} _{\geq 0}
\,\,{\rm and } \,\, b\in \mathbb{Q} _{>0}.
\end{align}
This generalizes the fact that the roots of $b_f\in \mathbb{C} [s]$ 
are negative rationals.

We are interested in studying the Bernstein--Sato ideals
of parametric statistical models for discrete data.
These models are families of probability distributions on $k$ states,
where the probability of the $i$th state is given by a polynomial $f_i$.
The $n$ unknowns $ x = (x_1,\ldots,x_n)$ represent the parameters
of the models. A key feature of any statistical model is the identity
\begin{equation}
\label{als:eqthesumisone}  f_1(x) + f_2(x) + \cdots + f_k(x) \,\,=\, \, 1, 
\end{equation}
along with the following reasonable semi-algebraic hypothesis:
$$\,\exists \,u \in \mathbb{R} ^n \,\,\forall\, i \in \{1,\ldots,k\}\, : \,f_i(u) > 0. $$
The coefficients of $f_i$ are usually rational numbers.
Here is a familiar example:

\begin{ex}[Flipping a biased coin]\label{als:exflipping}
	Let $n=1$ and $m=k-1$, so we can reindex from $0$ to $m$.
	Set $f_i(x) = \binom{m}{i} x^i (1-x)^{m-i}\in \mathbb{Q} [x]$
	for $i=0,\ldots,m$. Identity \eqref{als:eqthesumisone} holds
	by the Binomial Theorem.   
	We also set $f^s \coloneqq f_0^{s_0} f_1^{s_1} \cdots f_m^{s_m}$.
	The unknown $x$ represents the {\em bias} of the coin,
	a real number between $0$ and~$1$. This is the probability
	that the coin comes up heads.
	Then $f_i(x)$ is the probability of observing
	$i$ heads among $m$ independent coin tosses.
	
	The $s$-parametric annihilator 
	$\,\text{Ann}_{D[s]}\left( f^s \right)\,$ 
	is the $D[s]$-ideal generated by 
	\[ x(x-1)\partial_x\,-\,m x\sum_{j=0}^{m}s_j\,+\,\sum_{j=2}^m(j-1)s_j. \]
	This is the result of a computation for small values of $m$.
	The Bernstein--Sato ideal 
	$\mathcal{B}\left(f\right)\subset \mathbb{Q} [s_0,\ldots,s_m]$ 
	is obtained by adding $f_0 f_1 \cdots f_m$ to this ideal 
	and then eliminating $x$ and $\partial$.
	We find that $\mathcal{B}\left(f\right)$ is the principal ideal generated 
	by the following product of linear forms:
	\begin{equation}
	\label{als:eqcoinsproduct} \!\!\!\!
	\prod_{j=1}^{\binom{m+1}{2}} \!\! \left( s_1+2s_2 + \cdots + m s_m + j \right) 
	\cdot \!\! \prod_{k=1}^{\binom{m+1}{2}} \!\! \left( m s_0 + (m{-}1)s_1+\cdots+s_{m-1}+k \right)  .
	\end{equation}
	One sees that all factors are linear forms with positive coefficients, as predicted in
	\eqref{als:typehyperplane}. Note that we can recover these linear factors in 
	a combinatorial way from the following table.
	The rows mean that $f_i$ is a monomial in $x$
	and $1-x$, and the columns specify the exponents of these monomials:
	\begin{center}
		\begin{tabular}{r|ccccc|l}
			&$f_0$ & $f_1$  &\ldots &$f_{m-1}$ & $f_m$ & $\sum$ \\ 
			\hline
			$x \quad $ & $0$ & $1$ & \ldots & $m-1$ & $m$ & $\binom{m+1}{2}$ \\
			$1-x\,$ & $m$&$m-1$&\ldots &$1$&$0$&$\binom{m+1}{2}$ 
		\end{tabular}
	\end{center}
\end{ex}

The validity of the Formula 
\eqref{als:eqcoinsproduct} can be derived from the results on hyperplane arrangements in
\cite[Section 6]{als:Bud}. The point is that, for $n=1$
parameter,  each $f_i$ is a product of linear forms,
by the Fundamental  Theorem of Algebra.
Hence, $f_1f_2 \cdots f_k$ defines an arrangement in $\mathbb{R} ^1$.
The rows of the table indicate the multiplicity of each 
hyperplane in the~product.

In the context of statistics, the $s_i$ represent nonnegative
integers which summarize an independent and 
identically distributed sample. Namely, $s_i$ is the number of
observations of the $i$th outcome. The sum $s_1+ s_2 + \cdots + s_k$
is the sample size of the experiment. The function
$\,f^s \, = \,f_1^{s_1} f_2^{s_2} \cdots f_k^{s_k}$ is the
{\em likelihood function} of the model with respect to the data.
We think of $s$ as parameters, so we are interested in the situation 
when the model is fixed and the data varies. The vector
$s$ of counts ranges over  $\mathbb{N}  ^k$, or even over $\mathbb{R} ^k$ or $\mathbb{C} ^k$,
and we treat its coordinates $s_i$ as unknowns.
The role of the likelihood function $f^s$ in statistics can be summarized
by referring to the two camps in the history of statistics:
\begin{bulist}
	\item {\bf Frequentists}: Compute $\,{\rm argmax}(f^s)$, i.e.,~solve 
	an optimization problem.
	\item {\bf Bayesians}: \ \ \ \ Compute $\,\int_\gamma f^s dx$, 
	i.e.,~evaluate a certain definite integral.
\end{bulist}
We refer to Sullivant's book \cite[Chapter 5]{als:Sul}
for a discussion of these two perspectives.
The integration problem is reminiscent of the volume computation in the
previous lecture. Let us now discuss the optimization problem. This is the problem 
of maximum likelihood estimation (MLE).
The aim is to maximize $f^s$ over a suitable open subset
of parameters $x$ in $ \mathbb{R} ^n$. To address this problem algebraically,
one studies the map that associates to $s$ the critical points.
This map is an algebraic function. The number of its branches
is known as the {\em maximum likelihood degree} (ML degree);
see \cite{als:HuSt} and \cite[Chapter 7]{als:Sul}. Models of special interest
are those where the ML degree is one. This means that
the MLE is given by a rational function.
Such models were studied recently in \cite{als:DMS}.
The following two examples have this property.

\begin{ex}[Example \ref{als:exflipping} revisited]
	Consider the model where a biased coin is flipped $m$ times.
	The likelihood function $f^s$ has its unique critical point at
	\[  1-\hat x \,\,= \,\,  \frac{1}{m}\cdot \frac{m \cdot s_0+(m-1)\cdot s_1+\cdots +
		1 \cdot s_{m-1} +  0 \cdot s_m}{s_0+s_1+s_2+\cdots +s_m} . \]
	The probability estimates $\,f_i(\hat x)\,$ are alternating products of linear forms
	in the counts $s_0,\ldots,s_m$. The
	linear forms appearing in the numerators are seen in~\eqref{als:eqcoinsproduct}.
\end{ex}

\begin{ex}
	We examine the model that serves as the running example in \cite{als:DMS}.
	It concerns the following simple experiment: {\em Flip a biased coin.
	If it shows head, then flip it one more time.} Here $k=3,\,m=2$ and $n=1$. The 
	model is given algebraically by $\,f_0=x^2$, $f_1=x(1-x)$, and
	$ f_2=1-x$. These three polynomials in $\mathbb{Q} [x]$ sum to $1$. 
	The likelihood function equals
	\[ f^s \,\,=\,\, f_0^{s_0} f_1^{s_1} f_2^{s_2} \,\,=\,\, x^{2s_0+s_1} \cdot (1-x)^{s_1+s_2}.\]
	This is annihilated by the following first-order operator:
	\[ P \,\, = \,\, (x^2-x) \partial \, - \, (2 s_0 + s_1 + s_2) x \,+ s_1 + s_2 \,\in \,D[s_0,s_1,s_2].\]
 By eliminating $x$ and $\partial$ from the $D[s]$-ideal generated by $P$ and $x(1-x)$, we~get
	\[ \mathcal{B}\left( f \right) \,\,=\,\, \biggl\langle \,
	\prod_{k=1}^3 \left( 2s_0 + s_1 + k \right) \cdot \prod_{l=1}^2 \left( s_1+s_2+l \right) \,
	\biggr\rangle. \]
	Thus, the Bernstein--Sato ideal is principal and generated by
	a product of linear forms \eqref{als:typehyperplane}.
	As before, we  recover the linear factors appearing in
	$ \mathcal{B}(f)$: 
	\begin{center}
		\begin{tabular}{r|ccc|l}
			&$f_0$ & $f_1$  & $f_2$ & $\sum$\\ \hline
			$x$ & $2$&$1$&$0$&$3$\\
			$1-x$ & $0$ & $1$&$1$& $2$
		\end{tabular}
	\end{center}
	This table of multiplicities mirrors the formula in \cite[Example 2]{als:DMS} for the
	maximum likelihood estimate in the coin flip model:
\[  \hat x \,\,=\,\, \frac{2s_0+s_1}{2s_0+2s_1+s_2}\,, \qquad
	1 - \hat x  \,\,=\,\, \frac{s_1+s_2}{2s_0+2s_1+s_2} .\]
	Just like in Example \ref{als:exflipping}, the numerators are
	precisely the linear factors in $\mathcal{B}(f)$.
\end{ex}

Our discussion suggests that there is a 
deeper connection between
$D$-module theory and likelihood geometry~\cite{als:HuSt}.
This deserves to be explored. Further, it would
be interesting to study the Bayesian  integrals
$\int_\gamma f^s$ using the $D$-module methods 
from the previous lecture.

\smallskip
We now come to the Holonomic Gradient Method (HGM).
Consider the problem of  maximum likelihood estimation in statistics,
but now for continuous distributions rather than discrete ones.
Our aim is to explain the benefit gained from $D$-module theory.
Indeed, many functions of relevance in statistics are holonomic.
A key idea is to compute and represent the gradient of such a function
from its canonical holonomic representation.

The statistical aim of MLE is to find parameters for which an observed outcome is 
most probable \cite[Chapter 7]{als:Sul}. This can be formulated as an optimization problem, 
namely to maximize the likelihood function. For discrete models, this function
has the form~$f^s$, as seen above. In what follows
we consider the likelihood function for continuous models.

Our goal is to find a local maximum of 
a holonomic function using a variant
of gradient descent.
For the sake of efficiency, our
computations are carried out in the  rational Weyl algebra~$R  = \mathbb{C} (x)\langle \partial \rangle$.
See~\cite[Section 1.4]{als:SST} for the theory  of Gr\"{o}bner bases in $R$. 
Unless otherwise stated, we use the graded reverse lexicographical order $\prec$. 
For $n=2$,~this~gives 
\[ 1 \prec \partial_2 \prec \partial_1 \prec \partial_2^2 \prec 
\partial_1\partial_2 \prec \partial_1^2 \prec \cdots .\]

Let $f(x_1,\ldots,x_n)$ be a real-valued holonomic function and $I$ a $D$-ideal with 
finite holonomic rank such that $I\bullet f=0$. Thus, $R/RI$ is finite-dimensional over $\mathbb{C} (x_1,\ldots,x_n)$. 
In our application, $f$ will be the
likelihood function of a statistical model.
Let $\text{rank}  (I)=m\in \mathbb{N}  _{>0}$. We write
$S=\left\{ s_1,\ldots,s_m \right\}$,
with $s_1 = 1$, for the
set of standard monomials for a Gr\"{o}bner basis of $RI$ in $R$.
By Proposition \ref{als:proppartial},
the $m$ entries of the following vector are holonomic functions
\[F\,\,=\,\,\left(s_1 \bullet f, s_2 \bullet f, \ldots, s_m \bullet f \right)^T. \]
Note that the first entry of $F$ is the given function $f$.
In symbols,  \mbox{$ (F)_1=f$}. Since 
the $D$-ideal $I$ has holonomic rank $m$, there exist unique matrices \linebreak
$P_1,\ldots,P_n \in\mathbb{C} (x_1,\ldots,x_n)^{m\times m}$ such that
\begin{align}\label{als:pfaffsys} \qquad
\partial_i \bullet F \,\,=\,\, P_i \cdot F \qquad \text{for }\,\,i=1,\ldots,n.
\end{align}
The system of linear partial differential equations \eqref{als:pfaffsys} 
is called the {\em Pfaffian system of $f$}. Note that it depends
on the specific $R$-ideal $RI$ and on the chosen term order.
The matrices $P_i$ can be computed as follows.
We apply the division algorithm modulo our Gr\"obner basis
to the operators $\partial_i s_j ,$
for $i \in \{1,\ldots,n\}$ and $j \in \{1,\ldots,m\}$.
The resulting normal form~equals
\[ a^{(i)}_{j1}(x) s_1 \,+\,   a^{(i)}_{j2}(x) s_2 \,+\,  \cdots \,+\,   a^{(i)}_{jm}(x) s_m ,  \]
where the coefficients $a^{(i)}_{jk}$ are rational functions in $x_1,\ldots,x_n$.
This means that the operator $\,\partial_i s_j  - \sum_{k=1}^m a^{(i)}_{jk}(x) s_k\,$ is in
the $R$-ideal~$RI$. From this one sees that the coefficient $a^{(i)}_{jk}(x)$ is the entry 
of the $m {\times} m$ matrix $P_i$ in row $j$ and column~$k$.

We have now reached the following important conclusion. Suppose 
$x$ is replaced by a point $u$ in $\mathbb{Q} ^n$. Here $u$
might be a highly accurate floating point representation of a point in $\mathbb{R} ^n$.
The numerical evaluation of the gradient of~$f$  at $u$ reduces to 
multiplying the vector $F(u) \in \mathbb{Q} ^m$ by matrices $P_i(u)$ with explicit rational entries.
A tacit assumption made here is that $u$ lies in the complement
of the singular locus of the Pfaffian system \eqref{als:pfaffsys} that encodes~$f$.

\begin{ex}[$n=1$] 
	Let $f$ be  a holonomic function annihilated by
		$$ I\,\, =\,\, \langle \,x \partial^3-(x+1)\partial+1 \,\rangle. $$ 
	The generator by itself
	is a Gr\"obner basis for $RI$. The set of standard monomials equals
	$S = \{1,\partial,\partial^2\}$, and this is a $\mathbb{C} (x)$-basis of $R/RI$.
	From $I$ we see that
	\[ \partial^3\bullet f\,\,= \,\,\frac{x+1}{x}\partial \bullet f \,-\, \frac{1}{x}\cdot f.\]
	Let $F=(f,\partial \bullet f,\partial^2 \bullet f)^T$. This yields the
	following Pfaffian system for $f$:
	\begin{align*}
	\partial \bullet F \,\,=\,\,  P \cdot F \quad {\rm where} \quad P \,\,=\,\, 
	\begin{pmatrix}
	0& 1 & 0\\0 & 0 & 1\\ -\frac{1}{x} & \frac{x+1}{x} & 0 
	\end{pmatrix}.
	\end{align*}
	Using notation familiar from calculus,
	for any non-zero real number $u$ we have
	\[ \begin{pmatrix} f'(u) \\ f''(u) \\ f'''(u) \end{pmatrix}
	\,\,= \,\,
	\begin{pmatrix}
	0& 1 & 0\\0 & 0 & 1\\ -\frac{1}{u} & \frac{u+1}{u} & 0 
	\end{pmatrix} \cdot \begin{pmatrix} f(u) \\ f'(u) \\ f''(u) \end{pmatrix}. \]
	This matrix-vector formula is useful for the design of numerical algorithms.
\end{ex}

Given a holonomic function $f$, represented by 
a holonomic $D$-ideal, we are interested in the following two questions.
The first of these was already discussed in the previous lecture.
\begin{numlist}
	\item  How to evaluate $f$ at a point with the help of the knowledge of $I$?
	\item  How to find local minima of $f$ with the help of the knowledge of $I$?
\end{numlist}
We first describe how to evaluate the holonomic function $f$ at a 
point $\tilde{x}$ by a first order approximation. Assume we are able 
to numerically evaluate $f$ at some particular point $x^{(0)}$, 
depending on the precise situation. 
Choose a path $x^{(0)} \to x^{(1)} \to \cdots \to x^{(K)}=\tilde{x}$, 
with $x^{(k+1)}$ sufficiently close to~$x^{(k)}$ for all $k=0,\ldots,K-1$
and such that the path does not cross the singular locus of the Pfaffian 
system of~$f$. The following algorithm is referred to as the

\bigskip

\noindent\textbf{Holonomic Gradient Method (HGM).}
\begin{numlist}
	\item Compute a Gr\"{o}bner basis of $RI$ in 
	the rational Weyl algebra $R$.  \vspace{-0.1cm}
	\item Compute the
	set of standard monomials $S$ and
	the Pfaffian system~\eqref{als:pfaffsys}. \vspace{-0.1cm}
	\item Evaluate $F$ at one point $x^{(0)}$ and 
	denote the result by $\bar{F}$. Set $k=0$. \vspace{-0.1cm}
	\item Approximate the value of the vector $F$ at $x^{(k+1)}$ by
	its first-order Taylor polynomial, and denote the result again by $\bar{F}$:
	\begin{align*} 
	F\left(x^{(k+1)}\right) &\,\,\approx\,\, F\left(x^{(k)}\right) \,+\, \sum_{i=1}^n \left(x_i^{(k+1)}-x_i^{(k)}\right) \cdot \left(\partial_i \bullet F\right)\left(x^{(k)}\right)\\ 
	&\,\, = \,\, F\left(x^{(k)}\right) \,+\, \sum_{i=1}^n \left( x_i^{(k+1)} -x_i^{(k)} \right) \cdot P_i\left( x^{(k)}\right) \cdot \bar{F}. 
	\vspace{-0.2cm}
	\end{align*}  
	\item Increase the value of $k$ by $1$. If $k<K$, return to step 4. Otherwise stop.
\end{numlist}
Steps 1 to 3 need to be carried out only once for $(f,I)$.
The output of this algorithm is a vector $\bar F$ that approximates
$F(\tilde x)$. The first coordinate of $F(\tilde x)$ is the
desired scalar $f(\tilde x)$. Hence
the first coordinate of $\bar{F}$ is our
approximation.

\begin{remark}
	To turn the HGM into a practical algorithm, it is essential
	to incorporate some knowledge from numerical analysis.
	For instance, there is a lot of freedom
	in choosing the numerical approximation method in step 4. 
	Nakayama et al.~\cite{als:HGM} use the Runge--Kutta method of fourth order. 
	Another possibility is to use a second order Taylor approximation. 
	Here one computes the Hessian of~$f$ also by means of the Pfaffian system of~$f$.
\end{remark}

\begin{remark}
	In practical applications, the Gr\"obner basis computation in step~1 
	may not provide results within a reasonable time, but
	sometimes a partial Gr\"obner basis for $RI$ suffices to certify holonomicity. 
	From this one gets a finite superset $S$  of the unknown 
	set of true standard monomials.
	The set $S$ spans $R/RI$, but it may not be linearly independent.
	In that case,  one can still compute a Pfaffian system, but
	the $P_i$ are not necessarily unique anymore. This relaxation
	might work well in practice.
\end{remark}

We are now endowed with all necessary tools for finding a
local minimum of the holonomic function $f$. 
As before, $f$ is encoded by an annihilating $D$-ideal $I$ with 
finite holonomic rank. This encoding is the input to the next algorithm.

\bigskip

\noindent\textbf{Holonomic Gradient Descent (HGD).}
\begin{numlist}
	\item Compute a Gr\"{o}bner basis of $RI$ in 
	the rational Weyl algebra $R$.  \vspace{-0.1cm}
	\item Compute the set of standard monomials $S$ and
	 the Pfaffian system \eqref{als:pfaffsys}.  \vspace{-0.1cm}
	\item Numerically evaluate $F(x^{(0)})$ at some starting point 
	$x^{(0)}$ and put $k=0$. Denote this value by $\bar{F}$. 
	The evaluation method is chosen to be adapted to the problem.  \vspace{-0.1cm}
	\item For $i=1,\ldots,n$, evaluate  the first coordinate of
	$ \,P_i(x^{(k)})\bar{F}$. Let $\bar{G}$ be~the vector
	of these $n$ numbers.
	This  approximates the gradient $\nabla f$ at $x^{(k)}$~since
	\[ \,\partial_i \bullet f \,\,= \,\,\left( \partial_i\bullet F\right)_1
	\,\,=\,\, \left(P_i \cdot F\right)_1.\]
	\item If a termination condition of the iteration is satisfied, stop. 
	Otherwise go to step~6. \vspace{-0.1cm}
	\item Put $x^{(k+1)}=x^{(k)}-h_k \bar{G}$, where $h_k$ is 
	an appropriately chosen step length. \vspace{-0.5cm}
	\item Numerically evaluate $F$ at $x^{(k+1)}$ by step 4 of the HGM
	and set this value to $\bar{F}$. 
	Increase the value of the index $k$ by one and return to step 4 above.
\end{numlist}

The algorithm returns a point $x^{(k)}$ along with 
the value of $F$ at that point. The first entry of this output is a
numerical approximation of a local minimum of the
holonomic function~$f$. Again, one should be aware that, in general, this 
algorithm works only within connected components contained 
in the complement of the singular locus of the Pfaffian system of $f$.

In order to develop a practical implementation,
and to assess the quality of the method, one needs
some expertise from numerical analysis.
The choices one makes can make a huge difference.
For instance, consider the choice of the step size $h_k$.
This is a well-studied subject in numerical optimization,
and there are various standard recipes for carrying
out gradient descent. In current applications to data science,
stochastic versions of gradient descent play a major role, and it would
be very nice to connect $D$-modules to these developments.

The applicability of HGD arises from the fact that many
distributions that are used in practice are given by holonomic functions. 
One example is the cumulative distribution function of the largest 
eigenvalue of a Wishart matrix, cf.~\cite{als:HNTT}. 
Another relevant holonomic function 
is the likelihood function of sampling matrices in $\text{SO}(3)$. In
what follows we present in detail an example that stood
at the beginning of the development of HGM and HGD.

\begin{ex}[The Fisher--Bingham distribution \cite{als:HGM}]
	Let \[ \,\mathbb{S}^n(r) \,\,=\,\, \left\{ x\in \mathbb{R} ^{n+1} \mid \lVert x \rVert =r \right\}\,\]
	denote the $n$-sphere of radius $r$.
	Let  $x\in \mathbb{R} ^{(n+1)\times (n+1)}$ be a symmetric matrix and $y\in \mathbb{R}^{n+1}$ a row vector. 
	Let $\lvert dt \rvert$ denote the standard measure on $\mathbb{S}^n(r)$.
	The {\em Fisher--Bingham integral} is
	\[ F(x,y,r) \,\,= \,\, \int_{\mathbb{S}^n(r)}\!\! \exp\left( t^Txt+yt \right) \vert dt\vert.\]
	This is a function in $\binom{n+1}{2} + (n+1) + 1$ unknowns, since $x_{kl}=x_{lk}$.
	It is shown in \cite[Theorem~1]{als:HGM}  that the function
	$F(x,y,r)$ is holonomic.
	More precisely, the following operators
	annihilate the Fisher--Bingham integral and generate a $D$-ideal of finite holonomic rank:
    \begin{align*}
	\sum_{i=1}^{n+1} \partial_{x_{ii}} - r^2\, ,\quad
	r\partial_r-2\sum_{i\leq j}x_{ij}\partial_{x_{ij}}-\sum_i y_i \partial_{y_i}-n \, , \quad
	\partial_{x_{ij}}-\partial_{y_i}\partial_{y_j} \,\,\,{\rm for} \,\,\, i \leq j,
	\smallskip \\
	x_{ij}\partial_{x_{ii}} + 2(x_{ji}-x_{ii})\partial_{x_{ij}}-x_{ij}\partial_{x_{jj}}
	+\sum_{k\neq i,j} (x_{jk}\partial_{x_{ik}}-x_{ik}\partial_{x_{jk}}) + y_j \partial_{y_i} - y_i\partial_{y_j},
	\end{align*}
	where $i<j$ in the second line.
	For a proof, see \cite[Theorems 2,3]{als:HGM}. For $n=1,2$, these 
	operators generate a holonomic $D$-ideal, see~\cite[Proposition~1]{als:HGM}. 
	We now define the {\em Fisher--Bingham distribution} on the unit sphere $\mathbb{S}^n(1)$.
	This depends on the parameters $x,y$ and  has 
	probability density function 
	\[ p(t \! \mid \! x,y) \,\,\,= \,\,\, F(x,y,1)^{-1} \cdot \exp (t^T xt+yt). \]
	In other words, the Fisher--Bingham distribution plays the role of the
	Gaussian distribution on the sphere, and the Fisher--Bingham integral
	$F(x,y,1)$ is its normalizing constant.
	
	We now explain the inference problem to be solved.
	Let $\{t(1),\ldots,t(N)\}$ be an independent 
	and identically distributed sample of size~$N$ drawn from the unit sphere $\mathbb{S}^n(1)$.
	The statistical aim is to estimate the parameters
	$x=(x_{ij})$ and $y=(y_i)$ from the given sample. 
	The standard method to do so is MLE.
	We seek to maximize the {\em likelihood function} 
	\[ (x,y) \,\,\mapsto \,\, \prod_{\nu =1}^{N} p(t(\nu) \! \mid  \! x,y) .\]
	For the Fisher--Bingham model, this 	is equivalent to minimizing the function
	\begin{align}\label{als:FBexp}
	F(x,y,1) \cdot \exp \left(  -\!\!\sum_{1\leq i\leq j \leq {n+1}} S_{ij}x_{ij} 
	\,\,- \sum_{1\leq i \leq {n+1}} S_iy_i \right).
	\end{align}
	Here the quantities $S_i$ and $S_{ij}$ are real constants 
	that are easily computed  from the sample points $t(i)$.
	Namely, they are the coordinates of the sample mean and the sample
	covariance matrix:
	\[ S_i \,= \,N^{-1}\sum_{\nu=1}^Nt_i(\nu) \quad {\rm and} \quad
	S_{ij}\, =\, N^{-1} \sum_{\nu=1}^{N}t_i(\nu)t_j(\nu). \]
	The function  \eqref{als:FBexp} is a product of two holonomic functions.
	Hence, it is a holonomic function in the unknowns $x=(x_{ij})$ and $y=(y_i)$. 
	Furthermore, since our model is an exponential family,
	the logarithm of~\eqref{als:FBexp} is a convex function.
	This means that a local minimum is already a global one.
	Our task is therefore to find a local minimum of~\eqref{als:FBexp} using HGD.
	
	The authors of \cite{als:HGM} present two specific data sets
	and they demonstrate the use of HGD for this input.
	The data and some code in the computer algebra system {\tt Risa/Asir}  
	are provided at the website 
$$ \hbox{	
	{\tt http://www.math.kobe-u.ac.jp/OpenXM/Math/Fisher-Bingham/}.} $$
	
	One of the data sets is the following ``astronomical data''. 
	Here $n= 2$ and
		\begin{align*}
	& S_1=-0.0063, & S_{11}&=0.3199, & S_{22}&=0.3605,\\
	& S_2=-0.0054, & S_{12}&=0.0292, & S_{23}&=0.0462,\\
	& S_3=-0.0762,  & S_{13}&=0.0707, &S_{33}&=0.3276.
	\end{align*}
	The starting point is found by minimizing with a quadratic
	approximation of $F(x,y,1)$, with step size set at $0.05$.
	Running the HGD revealed the minimum objective function
	value $\approx 11.6857$. The maximum likelihood parameters are
	\begin{align*}
	x\,=\,\begin{pmatrix}
	-0.161 & 0.3377/2 & 1.1104/2\\
	0.3377/2 & 0.2538 & 0.6424/2\\
	1.1104/2 & 0.6424/2 & -0.0928
	\end{pmatrix}, \quad y\,=\,(-0.019,-0.0162,-0.2286).
	\end{align*}
	We reproduced this result, but this did take some effort.
\end{ex}

\medskip

In conclusion, we have argued that holonomic functions
arise in many contexts, notably 
in geometry and statistics. The manipulation of these functions can
be done by algorithms from the theory of $D$-modules.
Implementations already exist, and they are available
in a wide range of computer algebra systems. While the further development of
the symbolic computation tools is important, a significant new
opportunity lies in advancing the connection
to numerical algebraic geometry. Efficient numerical methods
for $D$-modules and holonomic functions have a clear
potential for future impact in scientific computing and data science.
These lectures offered a very first glimpse at the underlying mathematics.


\bigskip

\section*{Problems}

In this section, we offer some ideas for hands-on activities. These were discussed
in an afternoon session during the Berlin school.
The items range from easy exercises to 
challenging questions that suggest research projects.
We leave it to our readers to decide which is which.
Some hints and solutions are found below. 

\medskip

\begin{numlist}
	\item Let $M$ be a $D$-module which is finite-dimensional as a $\mathbb{C} $-vector space.
	Show that $M=0$. \hfill Hint: Can the commutator of two $n \times n$ matrices
	be the identity matrix?
	\item Find bases of solutions for the following three second-order linear differential equations:
	\[ x f'' + f' \,=\, 0 \quad {\rm and} \quad
	g'' \,=\, 4 x^2 g \quad {\rm and} \quad 
	x^2 h'' - 3 x h' + 4h \,=\, 0. \]
	\item For each of the following three functions  $u,v,w$ in one variable $x$,
	find a linear ordinary differential equation with polynomial coefficients that is 
	satisfied by that function:
	\[ u(x)\, =\, x^{3/5} \cdot {\rm log}(x)^2
	,\quad
	v(x) \,=\, {\rm sin}(x)^5
	,\quad
	w(x) \,=\, (1 + x^4)\cdot {\rm exp}(x). \]
	\item A $\mathbb{C} $-basis of the Weyl algebra 
	$D = \mathbb{C}  \left[ x_1,\ldots,x_n\right] \langle \partial_1,\ldots,\partial_n\rangle$
	consists of the normal monomials $x^a \partial^b$, where $a,b \in \mathbb{N}  ^n$.
	Find the formula for expressing
	the operator $\partial^b x^a$ in that~basis.	
	\item Let $n=3$. Compute the distraction $\,\widetilde I\,$ in $\mathbb{C} [\theta_1,\theta_2,\theta_3]$ 
	of	 the $D$-ideal
	\[I \,\,=\,\, \langle \partial_1^4, \partial_2^4, \partial_3^4,
	\partial_1 \partial_2^2 \partial_3^3,
	\partial_1^2 \partial_2^3 \partial_3, 
	\partial_1^3 \partial_2 \partial_3^2 \rangle. \]
	Also, find a 
	$\mathbb{C} $-basis   for the vector space  $\,{\rm Sol}(I) = {\rm Sol}(\widetilde I)$.
	\item Find a canonical holonomic representation for the bivariate function
	\[ f(x,y) \,\, = \,\,  e^{x \cdot y} \cdot {\rm sin} \frac{y}{1 + y^2}. \]
	\item Using the integration ideal as in Definition \ref{als:defintegrationideal}, find
	an operator in $D_1$ that annihilates the following function in one variable:
	\[ \qquad F(x) \,\, = \,\, \int_{0}^{+ \infty}\!\! f(x,y) dy,
	\qquad \text{where $f$ is the function in Problem 6.} \]
	\item Construct a rational function $r$ by taking the ratio of your two favorite
	polynomials in two variables. Compute $I = \text{Ann} _D(r)$
	and determine the singular locus ${\rm Sing}(I) \subset \mathbb{C} ^2$.
	\item Let $f$ be a holonomic function in one variable.
	Prove that its reciprocal\linebreak$1/f$ is holonomic if and only if
	the logarithmic derivative $f'/f$ is an \mbox{algebraic} function.
	\item {\em For those who like sheaves}:
	Why is a vector bundle together with a flat connection a module over the sheaf $\mathcal{D}$?	
	Actually, what are these objects over the projective line~$\mathbb{P}^1$?
	\item {\em For those who like toric geometry:}
	Pick your favorite projective toric manifold
	and write the presentation ideal
	(Stanley--Reisner plus linear forms) of its Chow ring
	in $\mathbb{C} [\partial_1,\ldots,\partial_n]$ and  also in
	$\mathbb{C} [\theta_1,\ldots,\theta_n]$.
	Determine the solution spaces in both cases.
	\item {\em For those who like the Hodge theory of matroids:}
	Pick your favorite matroid
	and write the presentation ideal
	(Stanley--Reisner plus linear forms) of its Chow ring
	in $\mathbb{C} [\partial_1,\ldots,\partial_n]$ and also in
	$\mathbb{C} [\theta_1,\ldots,\theta_n]$.
	Determine the solution spaces in both cases.
	\item Stafford's Theorem states that every $D$-ideal can be generated by
	two elements.
	Let $n=4$ and identify two differential operators that generate the $D$-ideal
	$\langle \partial_1,\partial_2,\partial_3,\partial_4\rangle$. Then, do the same for the
	$D$-ideal $I$ in Example~\ref{als:eqGauss}, for some choices of $a,b,c \in \mathbb{Z} $.
	\item Consider the general algebraic equation of degree five in one variable:
	\[  x_5 t^5 + x_4 t^4 + x_3 t^3 + x_2 t^2 + x_1 t + x_0 \,\,= \,\, 0 .\]
	Write the roots $t_1,\ldots,t_5$ as a holonomic function of the coefficients $x_i$.
	Restrict your holonomic system to a two-dimensional linear subspace in 
	the $\mathbb{C} ^6$ of coefficients.
	\item Let $f,g,h$ be as in  Problem 2.
	According to Proposition \ref{als:prodhol}, the functions
	$fg,\,fh,\,gh,\,f{+}g,\,f{+}h,\,g{+}h$ are holonomic.
	Find operators in $D_1 $ that annihilate these functions.
	\item Compute a Gr\"obner basis in $R$ for the annihilator of the
	function $f(x,y)$ in Problem~6. Determine the Pfaffian system (\ref{als:pfaffsys}).
	Verify that $P_1$ and $P_2$ satisfy \cite[Equation (1.35)]{als:SST}.
	\item Write the likelihood function $f^s$
	for the {\em random censoring model} in \cite[Example 7.1.5]{als:Sul}.
	Compute the $s$-parametric annihilator
	${\rm Ann}_{D[s]}(f^s)$ and the Bernstein--Sato ideal~$\mathcal{B}(f)$.
	\item Let $n=4$ and let $I$ be the left ideal in $D_4$ generated by the four operators
	\begin{align*}
	3 x_1 \partial_1 + 2 x_2 \partial_2 + x_3 \partial_3 -3,&\quad 	
	x_1 \partial_2 + 2 x_2 \partial_3 + 3 x_3 \partial_4,\\
	(3 x_2 \partial_1 + 2 x_3 \partial_2 + x_4 \partial_3)^4, &\quad
	 x_2 \partial_2 + 2 x_3 \partial_3 + 3 x_4 \partial_4 .
	\end{align*}
	Show that $I$ is holonomic and determine its rank.
	Compute the characteristic variety and the singular locus.
	Explain their irreducible components.
	\item Let $n=9$, where the Weyl algebra generators
	 $x_{ij}$ and $ \partial_{ij}$ are entries of  $3 \times 3$ matrices
	respectively.
	Let $P$ be the prime ideal in $\mathbb{C} [x_{ij}]$ that defines the 
	group ${\rm SO}(3)$ in $\mathbb{C} ^{3 \times 3}$.
	Let $I$ be the $D$-ideal generated by $P$ and 
	\[ \biggl\{ \,\sum_{k=1}^3\, (x_{ki} \partial_{kj} - x_{kj} \partial_{ki})  
	\,\, : \,\, 1 \leq i < j \leq 3 \,\biggr\}. \]
	Show that $I$ is holonomic and Weyl-closed. 
	Compute its rank and characteristic variety.
\end{numlist}

\section*{Solutions and Hints}

\begin{numlist}
	\item Let $M\in \text{Mod}(D)$ with $\dim _{\mathbb{C} } M =m$. Since $[\partial_i,x_i]=\text{id}_M$,
	the trace of the commutator is given by $\text{tr}\left([\partial_i,x_i]\right) = m \cdot 1$. 
	On the other hand, $\partial_i,x_i \in \text{End}_{\mathbb{C} }(M)$ are described by matrices 
	$P_i,X_i \in \mathbb{C} ^{m\times m}$. Hence, 
	$\text{tr} \left([\partial_i,x_i] \right) = \text{tr}\left(P_iX_i-X_iP_i \right) = \text{tr} (P_iX_i)-\text{tr}\left(X_iP_i\right)=0$
	and therefore~$m=0$. 	
	\item 
		We start with $P_f =x  \partial^2 + \partial$ and  $I_f=\langle P_f \rangle$. 
		A computation shows that $\text{in} _{(-w,w)}(I_f)=I_f$ for all $w\in \mathbb{R} ^n$. 
		Therefore, $I_f$ is torus-fixed and $\text{Sol}(I_f)=\text{Sol} \left( \widetilde{I_f}\right)$, where 
$\widetilde{I_f} = \langle \theta^2\rangle$. Therefore, $\text{Sol}(I_f)=\mathbb{C}  \left\{ 1,\log(x) \right\}$.

The second ODE corresponds to the operator		
		 $P_g = \partial^2 - 4x^2$.  The lowest order terms of the solutions to $P_g$ can be computed
		by Proposition~\ref{als:propdistrac}.
		
		Denote by $P_h= x^2\partial^2 - 3x\partial + 4$. We observe  that the ideal generated by 
		\mbox{$P_h = \theta^2 - 4 \theta +4 = (\theta-2)^2$} is a Frobenius ideal. 
		The inverse system at $2$ is $\,Q_2^{\perp} =\mathbb{C}  \{1,x\}$. 
		By Theorem~\ref{als:solutionFrob}, we get
		$\text{Sol} \left( \langle P_h\rangle \right) =\mathbb{C} \left\{ x^2,x^2\log(x) \right\}$.
	
	One can reproduce these results by the following  {\tt Mathematica} code:
	\begin{verbatim}
	DSolve[x*y''[x] + y'[x] == 0, y[x], x] 
	DSolve[y''[x] - 4*x^2*y[x] == 0, y[x], x] 
	DSolve[x^2*y''[x] - 3*x*y'[x] + 4*y[x] == 0, y[x], x] 
	\end{verbatim}
	\item The reader is encouraged to compute the derivatives and 
	then identify a relation between them. 
	Alternatively, the following code in {\tt Mathematica} computes annihilating differential operators for $u,v,w$:
	\begin{verbatim}
	<< RISC`HolonomicFunctions`
	u = x^(3/5)*(Log[x])^2
	annu = Annihilator[u, Der[x]]
	v = (Sin[x])^5
	annv = Annihilator[v, Der[x]]
	w = (1 + x^4)*Exp[x]
	annw = Annihilator[w, Der[x]]
	\end{verbatim}
	\item
	Start with $n=1$ and then extend.
	For any nonnegative integers $a$ and $b$, 
	\[ \partial^b x^a\,\,=\,\, \sum_{i\geq 0} \frac{a!b!}{i!(a-i)!(b-i)!} x^{a-i}\partial^{b-i} ,\]  
	where negative powers are $0$ and zero powers are $1$. 
	\item Similar to Example~\ref{als:staircase}, consider the staircase under the 
	monomial ideal. This is the cover picture of a text book by Ezra Miller
	and Bernd~Sturmfels.
	\item 	The following code in {\tt Mathematica}
	computes an annihilating ideal for $f$:
	\begin{verbatim}
	<< RISC`HolonomicFunctions`
	f = Exp[x*y]*Sin[y*1/(1+y^2)]
	ann = Annihilator[f,{Der[x],Der[y]}]
	\end{verbatim}
	We find that the operators 
	$\partial_{x}-y$ and $(y^{10}{+}3 y^8{+}2 y^6{-}2 y^4{-}3 y^2{-}1) \partial_y^2 $ $+({-}2 x 
	y^{10}{-}6 x y^8{-}4 x y^6{+}4 x y^4{+}6 x y^2{+}2 x{+}2 y^9{-}12 y^5{-}16 y^3{-}6 y) 
	\partial_y{+}(x^2 y^{10} $ $ +3 x^2 y^8{+}2 x^2 y^6{-}2 x^2 y^4{-}3 x^2 y^2{-}x^2{-}2 x 
	y^9{+}12 x y^5{+}16 x y^3{+}6 x y{+}y^6{-}3 y^4 $ \linebreak $ +3 y^2{-}1)$ 
	annihilate $f$.
	It remains to specify sufficiently many initial conditions and to prove that 
	these two operators generate a holonomic $D$-ideal.
	\item The theoretical argument is similar to that in the proof of Proposition~\ref{als:inthol}. 
	For the computation we can use the commands {\tt CreativeTelescoping} and {\tt ApplyOreOperator} 
	of the {\tt HolonomicFunctions} package in {\tt Mathematica}.
	\item Let $r=p/q$ with $p=x^2+xy, \,q=y^2\in \mathbb{C} [x,y]$. 
	Running the following code in {\tt Singular}, using the libraries \cite{als:AL,als:A,als:LM}, 
	solves the problem:
	\begin{verbatim}
	LIB "dmod.lib";	LIB "dmodapp.lib";	LIB "dmodloc.lib";
	ring r=0,(x,y),dp; setring r;
	poly p=x^2+x*y;	poly q=y^2;
	def an=annRat(p,q); setring an; LD;
	isHolonomic(LD);
	DsingularLocus(LD);
	def CV=charVariety(LD); setring CV;
	charVar;
	\end{verbatim}
	\item For a proof, we refer to the article~\cite{als:HS} of Harris and Sibuya.
	\item Vector bundles with flat connection over $\mathbb{P}^1$  correspond to those over~$\mathbb{P}^{\text{an}}$. 
	By the Riemann--Hilbert correspondence, such a vector bundle is determined by its monodromy data. 
	Since the Riemann sphere is simply connected, it follows that vector bundles with flat connection 
	on $\mathbb{P}^1$ are classified by globally free sheaves of finite rank with the natural action of~$\mathcal{D}$.
	\item We consider the Hirzebruch surface whose Chow ring
	has the presentation ideal $\,I \,=\,\langle \partial_1 \partial_3, \partial_2 \partial_4, 
	\partial_1+ 2 \partial_2 - \partial_3 , \partial_2 - \partial_4 \rangle$.
	Then the solution space is
	\[ {\rm Sol}(I) \,\,= \,\,
	\mathbb{C} \bigl\{ -x_1^2+x_1 x_2+x_1 x_4+x_2 x_3+x_3^2+x_3 x_4,\,
	x_1 + x_3, \,x_2 + 2 x_3 + x_4 ,\,1\,\bigr\}. \]
	The quadratic form is the {\em volume polynomial} \cite[Remark~3.6.14]{als:SST} which 
	generates ${\rm Sol}(I) \simeq \mathbb{C}^4$ as a cyclic module over 
	$\mathbb{C}[\partial_1,\partial_2,\partial_3,\partial_4]$. If we replace each $\partial_i$
	by $\theta_i = x_i \partial_i$, then the solution space is the same but with
	each variable $x_i$ replaced by the corresponding logarithm ${\rm log}(x_i)$.
	\item We refer to \cite[Definition 1.1]{als:Eur} for
	Chow rings of matroids. Again, the ideal is generated by
	squarefree monomials and linear forms. Eur~\cite{als:Eur}
	gives an explicit formula for the volume polynomial. Just like in the
	toric case, this polynomial
	generates the solution space as a $\mathbb{C}[\partial]$-module.
	\item 
	The following code in {\tt Macaulay2} computes two generators, 
	which are guaranteed to generate the given ideal $I$ over the rational Weyl algebra. 
	\begin{verbatim}
	loadPackage "Dmodules"
	D=QQ[x1,x2,x3,x4,d1,d2,d3,d4,
	WeylAlgebra=>{x1=>d1,x2=>d2,x3=>d3,x4=>d4}];
	D
	I=ideal(d1,d2,d3,d4)
	stafford I
	\end{verbatim}
	By computing reduced Gr\"{o}bner bases over $D_4$, one then can check that 
	the  two obtained operators indeed generate $I$
	as a left module over $D_4$.
	We invite our readers to tackle the challenge of computing two generators for the
	$D$-ideal $I$ in Example~\ref{als:eqGauss}, for some choices of $a,b,c \in \mathbb{Z} $.
	\item Let $f(t,x_0,\ldots,x_5)=  x_5 t^5 + x_4 t^4 + x_3 t^3 + x_2 t^2 + x_1 t + x_0 $.
	The five roots are algebraic functions $t_1,\ldots,t_5$ in the six variables $x_0,\ldots,x_5$. 
	Let $t$ be one out of these five functions. We have
		 $t(\lambda x_0,\ldots, \lambda x_5) = t(x_0,\ldots, x_5)$
		 since $f$ and $\lambda f$ have the same roots.
	By taking the derivative with respect to~$\lambda$, we see that $\theta_0+\cdots+\theta_5$ 
	annihilates~$t$, where $\theta_k=x_k\partial_k$. We  similarly observe 
	$t(x_0,\lambda, x_1,\ldots, \lambda^5 x_5)= \lambda \cdot t(x_0,\ldots,x_5)$, which implies that
	$1-\theta_1-2\theta_2-\cdots - 5\theta_5$ annihilates~$t$. 
	Now, consider $t$ as a simple root of the holomorphic function~$f$. 
	By the Residue Theorem from complex analysis,
	\[ t\,\,=\,\,\int_{\gamma} \frac{z f'}{f}dz\] 
	for a suitable integration cycle $\gamma$ in the complex plane. 
	From this integral representation, with the help of $A$-hypergeometric series, 
	it follows that 
	\[ \left\{ \partial_i \partial_j - \partial_k \partial_l \mid i+j = k+l \right\} \,\subseteq \,\text{Ann} _{D_6}(t).\]
	We refer the reader to~\cite{als:SAhyp} for details. See also the opening section in \cite{als:SST}.
	We note that the package \mbox{{\tt HolonomicFunctions}} in {\tt Mathematica} is able compute 
	an annihilator of an algebraic function, which is implicitly given.
	The restriction of this ideal to a two-dimensional linear subspace in 
	the six-dimensional space of coefficients can be run in {\tt Singular} 
	using the command {\tt restrictionIdeal} in the library {\tt dmoddapp.lib} \cite{als:AL}.
	\item For a recipe, see the proof of Proposition~\ref{als:prodhol}. 
	Alternatively, the commands {\tt DFinitePlus} and {\tt DFiniteTimes} in the 
	{\tt Mathematica} package {\tt Holonomic Functions} compute the annihilators 
	of the sum and product of functions, taking only the annihilators 
	of the single functions as an input.
	\item For computing the Pfaffian system, we recommend using the commands 
	{\tt OreGroebnerBasis} and {\tt OreReduce} for Gr\"{o}bner basis computations in the rational Weyl algebra. 
	They belong to the package {\tt HolonomicFunctions}~in {\tt Mathematica}.
	Do verify that $P_1$ and $P_2$ satisfy \cite[Equation (1.35)]{als:SST}.
	\item Fix $n=3$ and $k=4$. The random censoring model is parametrized by 
\[	\begin{matrix}
	f_1 \,=\, \frac{x_3}{x_1+x_2+x_3} , & \quad &
f_2 \,=\, \frac{x_1x_3}{(x_2+x_3) (x_1+x_2+x_3)}, \smallskip \\
f_3  = \frac{x_2 x_3}{(x_1+x_3)(x_1+x_2+x_3)} ,& \quad & 
	f_4  = \frac{x_1 x_2 (x_1+x_2+2 x_3)}{(x_1+x_3) (x_2+x_3) (x_1+x_2+x_3)} .
	\end{matrix} 
\]	
	Geometrically, this statistical model is the cubic surface
	$\{2 f_1 f_2 f_3+f_2^2 f_3+f_2 f_3^2-f_1^2 f_4+f_2 f_3 f_4 = 0 \}$
	inside the tetrahedron $\{f_1+f_2+f_3+f_4 = 1\}$. Its likelihood function is
	$f^s = f_1^{s_1} f_2^{s_2} f_3^{s_3} f_4^{s_4}$.
To compute $\,\text{Ann}_{D[s]}\left( f^s \right)$,
we must adapt \cite[Algorithm 5.3.15]{als:SST} to the case of rational functions.
	\item We provide the following code in {\tt Singular}, in order to solve this exercise.
	\begin{verbatim}
	LIB "dmod.lib"; LIB "dmodapp.lib"; 
	LIB "dmodloc.lib"; LIB "primdec.lib";
	int n=4; def D=makeWeyl(n); setring D;
	ideal I=3*x(1)*D(1)+2*x(2)*D(2)+x(3)*D(3)-3, 
	(3*x(2)*D(1)+2*x(3)*D(2)+x(4)*D(3))^4,
	x(1)*D(2)+2*x(2)*D(3)+3*x(3)*D(4), 
	x(2)*D(2)+2*x(3)*D(3)+3*x(4)*D(4);
	isHolonomic(I);
	DsingularLocus(I);
	def CV = charVariety(I); setring CV;
	charVar;
	list pr=minAssGTZ(charVar);	pr;
	\end{verbatim}
	\item For the $\text{SO}(2)$ case, computations can be easily run using 
	a computer algebra software.
	For the $\text{SO}(3)$ case, computations get highly intensive. 
	We refer the reader to an article of Koyama~\cite{als:K}. Let us draw the reader's attention
	to the following lemma of this article. 
	{\em Let $I$ be a holonomic $D$-ideal. If $\,\text{in}_{(0,e)}(I)\,$ is prime, then $I$ is maximal.}
	This statement may be used in order to investigate if a holonomic ideal contained 
	in the annihilator of a function already presents the full annihilator. 
	The rank computation for the $\text{SO}(3)$ case is carried out in the article 
	\cite{als:ALSS} by investigating the holonomic dual of that $D$-ideal. 
	Moreover, a generalization to compact Lie groups other than 
	$\text{SO}(n)$ can be found therein.
\end{numlist}

\bigskip \bigskip

\textbf{Acknowledgments.} A number of people
helped us with the material presented here. We are grateful to
Michael F. Adamer, Paul G\"{o}rlach, Alexander Heaton, Roser Homs Pons, 
Christoph Koutschan, Christian Lehn, Viktor Levandovskyy, 
Andr\'{a}s C. L{\H o}rincz, Marc Mezzarobba, and Emre C. Sert\"{o}z.

\bigskip \medskip

\textbf{After-effects.} We are happy to report that our lecture notes, made available
in first version on the {\tt arXiv}
in October 2019,
had some productive consequences already. One of these is our article \cite{als:ALSS}.
Andreas Bernig applied the theory of holonomic functions for completing his proof of a conjecture by Joe Fu, 
establishing a link between integral geometry and combinatorics.
Together with Robin van der Veer, we are currently working on a better structural 
understanding of the connection between Bernstein--Sato theory and MLE in statistics.

\bibliographystyle{alpha}
\bibliography{biblio}


\end{document}